\title[Nonlinear Schr\"odinger equation]{Semiclassical stationary states for nonlinear Schr\"odinger equations under a strong external magnetic field}
\author{Jonathan Di Cosmo}
\address{
  Institut de Recherche en Math\'ematique et Physique\\
  Universit{\'e} catholique de Louvain\\
  Chemin du Cyclotron 2 bte L7.01.01, 1348 Louvain-la-Neuve, Belgium}
\address{
  D{\'e}partement de Math{\'e}matique\\
  Universit{\'e} libre de Bruxelles, CP 214\\
  Boulevard du Triomphe, 1050 Bruxelles, Belgium}
\author{Jean Van Schaftingen}
\address{
  Universit{\'e} catholique de Louvain\\
  Institut de Recherche en Math\'ematique et Physique\\
  Chemin du Cyclotron 2 bte L7.01.01\\ 1348 Louvain-la-Neuve\\ Belgium}
\email{Jean.VanSchaftingen@uclouvain.be}
\newcommand{\R}{\mathbb{R}}
\newcommand{\N}{\mathbb{N}}
\newcommand{\C}{\mathbb{C}}
\newcommand{\dif}{\,\mathrm{d}}
\newcommand{\norm}[1]{\left\| #1 \right\|}		
\newcommand{\abs}[1]{\lvert #1 \rvert}			
\newcommand{\bigabs}[1]{\bigl\lvert #1 \bigr\rvert}			
\newcommand{\Bigabs}[1]{\Bigl\lvert #1 \Bigr\rvert}			
\DeclareMathOperator{\supp}{supp}		
\newcommand{\weakto}{\rightharpoonup}
\newcommand{\dualprod}[2]{\langle #1, #2 \rangle}
\newcommand{\scalprod}[2]{( #1 \vert #2 )}
\newcommand{\mathsuper}[1]{#1}
\newcommand{\defeq}{\buildrel\triangle\over =}
\newcommand{\Lin}{\mathcal{L}}
\newcommand{\e}{\mathrm{e}}
\newcommand{\st}{\;\vert\;}
\DeclareMathOperator{\sign}{sign}
\newtheorem{proposition}{Proposition}[section]
\newtheorem{theorem}{Theorem}
\newtheorem{lemma}[proposition]{Lemma}
\keywords{Magnetic nonlinear Schr\"odinger equation; strong magnetic field; large magnetic field; magnetic dipole; Lorentz force; Nehari manifold; penalization method; asymptotics of solutions; estimates for distributional solutions; concentration.}
\subjclass[2010]{35J91 (35B25, 35B40, 35J20, 35Q55, 81Q20)}
\begin{document} 

\begin{abstract}
We construct solutions to the nonlinear magnetic Schr\"odinger equation 
\[
  \left\{ \begin{aligned} 
  - \varepsilon^2 \Delta_{A/\varepsilon^2} u + V u &= \abs{u}^{p-2} u & &\text{in}\  \Omega, \\
  u &= 0 & &\text{on}\  \partial\Omega,
  \end{aligned}
  \right.
\]
in the semiclassical r\'egime under strong magnetic fields.
In contrast with the well-studied mild magnetic field r\'egime, the limiting energy depends on the magnetic field allowing to recover the Lorentz force in the semi-classical limit.
Our solutions concentrate around global or local minima of a limiting energy that depends on the electric potential and on the magnetic field.
Our results cover unbounded domains, fast-decaying electric potential and unbounded electromagnetic fields.
The construction is variational and is based on an asymptotic analysis of solutions to a penalized problem following the strategy of M.\thinspace{}del Pino and P.\thinspace{}Felmer.
\end{abstract}

\maketitle

\section{Introduction}

The nonlinear Schr\"odinger equation that models the evolution of a wave-function \(\psi\) of a charged particle in \(\R^N\) under an external electromagnetic field reads as
\begin{equation}
\label{eqPhysicalMagNLSE}
  i \hbar \partial_t \psi = -\frac{\hbar^2}{2 m} \Delta_{A/\hbar} \psi + U \psi - \abs{\psi}^{p - 2} \psi. 
\end{equation}
Here \(m\) is the mass of the particle, \(2\pi \hbar = h\) is the Planck constant, the function \(U : \R^N \to \R\) is an external electric potential, the differential form \(A : \R^N \to \bigwedge^1 \R^N\) is an external magnetic vector potential, \(\Delta_A\) is the covariant Laplacian with respect to the connexion induced by \(A\) and given for \(\psi \in C^2 (\R^N; \C)\) by
\[
  -\Delta_A \psi \defeq -\Delta \psi - 2 \imath \scalprod{D\psi}{A} - \imath (d^* A) \psi + \abs{A}^2\psi,
\]
with \(d^* A (x) = \sum_{i = 1}^n D A (x)[e_i, e_i]\),
and \(-\abs{\psi}^{p - 2}\) is a focusing self-interaction potential. 
Stationary solutions to this problem have been studied in various settings \citelist{\cite{EstebanLions1989}\cite{ArioliSzulkin2003}\cite{ChabrowskiSzulkin2005}\cite{Chabrowski2002}}.

In the \emph{semi-classical limit}, that is when the scale of the problem is large compared to the Planck constant, one expects physically the motion to reduce to the classical Newtonian dynamics of a charged particle under the Lorentz force  
\begin{equation}
\label{eqLorentz}
  F = q (d U + v \lrcorner d A),
\end{equation}
where \(q\) is the electric charge and \(v\) the velocity vector of the charged particle. The inner product and the exterior derivative are given by \(v \lrcorner d A (x) [w] = d A (x)[v, w] = D A (x)[v, w] - D A (x) [w, v]\).
(In the three-dimensional Gibbs formalism, the formula \eqref{eqLorentz} corresponds to \(F = q (\nabla U + v \times \nabla \times A)\).)
In particular, standing wave solutions should correspond to particles at rest (\(v = 0\)) at critical points of the electric potential \(U\). 
In this situation the magnetic potential thus does not play any role. The 
corresponding stationary problem in the semi-classical limit has been the object 
of numerous studies in the last decade 
\citelist{\cite{Kurata2000}\cite{AlvesFigueiredoFurtado2011}\cite{Barile2008}
\cite{Cingolani2003}\cite{CingolaniClapp2010}\cite{CingolaniClapp2009}\cite{
CingolaniSecchi2002}\cite{CingolaniSecchi2005}\cite{DingLiu2013}\cite{
CingolaniJeanjeanSecchi2009}\cite{BartschDancerPeng2006}\cite{Barile2008b}\cite{
BarileCingolaniSecchi2006}\cite{CaoTang2006}\cite{SecchiSquassina2005}}.
(The complete Lorentz force \eqref{eqLorentz} can be recovered by studying  the 
soliton dynamics \cite{Squassina2009}.)

Since the standing waves in the r\'egime presented above do not interact with 
the magnetic field,  they do not allow to derive in the semi-classical limit the 
magnetic contribution to the Lorentz force.
Even if the Lorentz force does not act on charges at rest, it does act on 
\emph{magnetic 
dipoles} at rest according to the law
\begin{equation}
\label{eqLorentzDipole}
  F= q \, dU + d \dualprod{dA}{\mu},
\end{equation}
where the bivector \(\mu \in \bigwedge^2 \R^N\) is the magnetic moment of the dipole \cite{Kovanen2011}*{(1)}. (In the three-dimensional space in the Gibbs formalism, this is \(F = q \nabla U + \nabla (\mu \cdot \nabla \times A)\).)
Whereas the magnetic moment does not vanish in general, it does not play any role in the stationary semi-classical limit.
This can be explained as follows: if a wave-packet is concentrated at a length-scale \(\ell \approx \hbar/\sqrt{m E_0}\) (where \(E_0\) is the groundstate energy of the sytem), then the electric charge is of the order \(\ell^N\) whereas the magnetic dipole is at most of the order \(\ell^{N + 1}\). 
In order to study the interaction between the magnetic dipole and the magnetic field for stationary solutions, we propose to take an external magnetic potential of the order \(\ell^{-1}\), in a what we call the \emph{strong magnetic field r\'egime}. The interaction with the magnetic field should be comparable to the interaction with the electric field. This should allow to determine whether the classical Lorentz interaction of a charged magnetic dipole with an electromagnetic fied \eqref{eqLorentzDipole} is recovered in the stationary semi-classical limit.

By adimensionalization of the problem \eqref{eqPhysicalMagNLSE}, we are lead to study the mathematical problem
\begin{equation}
\label{problemMNLSE}\tag{$\mathcal{P}_{\varepsilon}$}
	\left\{ \begin{aligned} 
	- \varepsilon^2 \Delta_{A/\varepsilon^2} u + V u &= \abs{u}^{p-2} u & &\text{in}\  \Omega, \\
	u &= 0 & &\text{on}\  \partial\Omega,
       \end{aligned}
	\right.
\end{equation}
where \(\Omega \subset \R^N\) and \(\varepsilon > 0\) is a small real parameter.
The action functional associated to the problem \eqref{problemMNLSE} is
\begin{align*}
  \mathcal{F}_{\varepsilon}(u) \defeq  \frac{1}{2} \int_{\Omega} \varepsilon^2 \abs{D_{A/\varepsilon^2} u}^2 + V \abs{u}^2  
  - \frac{1}{p} \int_{\Omega} \abs{u}^{p};
\end{align*}
and is defined (with possibly the value \(-\infty\)) on the completion 
\(H^1_{V, A/\varepsilon^2} (\Omega)\) of the set of compactly supported continuously differentiable functions \(C^1_c (\Omega; \C)\) with respect to the Euclidean norm \(\norm{\cdot}\) defined for every \(u \in C^1_c (\Omega; \C)\) by
\[
  \norm{u} \defeq \Bigl(\int_{\Omega} \varepsilon^2 \abs{D_{A/\varepsilon^2} u}^2 + V \abs{u}^2\Bigr)^\frac{1}{2},
\]
where the covariant derivative \(D_{A/\varepsilon^2}\) is defined as
\[
 D_{A/\varepsilon^2} u \defeq Du + \imath u A/\varepsilon^2.
\]

In order to describe the limiting behaviour of solutions, given a real number \(V_* \in (0, \infty)\) and a form \(A_* \in \Lin (\R^N; \bigwedge^1 \R^N)\), we define 
the \emph{limiting action functional} \(\mathcal{I}_{V_*, A_*} : H^1_{V_*, A_*} (\R^N) \to \R\) for every \(v \in H^1_{V_*, A_*} (\R^N)\) by 
\[
  \mathcal{I}_{V_*, A_*} (v) \defeq \frac{1}{2} \int_{\R^N} \abs{D_{A_*} u}^2 + V_* \abs{u}^2  
  - \frac{1}{p} \int_{\R^N} \abs{u}^{p}.
\]
We consider the \emph{action of the limiting problem}
\begin{equation}
\label{equationDefinitionE}
  \mathcal{E} (V_*, A_*)
  \defeq \inf \bigl\{ \mathcal{I}_{V_*, A_*} (v) \st v \in H^1_{V_*, A_*} (\R^N) \setminus \{0\} \text{ and }  \dualprod{\mathcal{I}_{V_*, A_*}'(v)}{v} = 0 \bigr\},
\end{equation}
that is, the infimum of the functional \(\mathcal{I}_{V_*, A_*}\) on its \emph{Nehari manifold}.
The infimum in the definition of the function \(\mathcal{E}\) is achieved at minimizers \cite{EstebanLions1989} that satisfy the limiting equation
\begin{equation}
\label{problemLimit}
\tag{$\mathcal{R}_{V_*, A_*}$}
  -\Delta_{A_*} v + V_* v = \abs{v}^{p - 2} v.
\end{equation}
The function \(\mathcal{E}\) is continuous (see proposition~\ref{propositionEcontinuous} below).

The \emph{concentration function} \(\mathcal{C} : \Omega \to \R\) is defined at every point \(x \in \Omega\) by 
\[
  \mathcal{C} (x) \defeq \mathcal{E} \bigl(V (x), D A (x)\bigr) = \mathcal{E} \bigl(V (x), d A (x)/2\bigr).
\]
The second equality comes from the gauge invariance of the limiting problem.
This function \(\mathcal{C}\) is continuous as soon as \(\frac{1}{2} - \frac{1}{N} < \frac{1}{p} < \frac{1}{2}\), \(V\) is continuous and \(A\) is continuously differentiable.

\begin{theorem}
\label{theoremGlobal}
Assume that \(\frac{1}{2} - \frac{1}{N} < \frac{1}{p} < \frac{1}{2}\), \(\Omega \subset \R^N\) is bounded and \(\inf V > 0\). If \(V \in C (\Bar{\Omega})\), \(A \in C^1 (\Bar{\Omega})\) and \(\Omega\) has a \(C^1\) boundary in a neighbourhood, then there exists a family of solutions \((u_\varepsilon)_{\varepsilon > 0}\) of \eqref{problemMNLSE} in \(H^1_{V,A/\varepsilon^2} (\Omega)\) and a family of points \((x_\varepsilon)_{\varepsilon > 0}\) in \(\Omega\) such that 
\begin{align*}
\lim_{\substack{R \to \infty\\ \varepsilon \to 0}} \norm{u_\varepsilon}_{L^\infty (\Omega \setminus B_\rho (x_\varepsilon))} &= 0,&
  & \lim_{\varepsilon \to 0} \varepsilon^{-N} \mathcal{F}_\varepsilon (u_\varepsilon)
  = \lim_{\varepsilon \to 0} \mathcal{C} (x_\varepsilon) = \inf_{\Omega} \mathcal{C}.  
\end{align*}
\end{theorem}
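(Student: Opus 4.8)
The plan is to construct the solutions $u_\varepsilon$ by a variational method applied to a penalized functional, in the spirit of del Pino–Felmer, and then to recover the original equation together with the concentration asymptotics through a careful elliptic analysis of the rescaled problem. Since $\Omega$ is bounded, $\inf V > 0$, and $\mathcal{C}$ is continuous on $\Bar{\Omega}$, the infimum $\inf_\Omega \mathcal{C}$ is attained, say at $x_0$; this is the point around which concentration is expected. The first step is to fix a small ball around $x_0$ (or, more robustly, to work with the whole domain and a penalization that is active away from near-minimizers of $\mathcal{C}$), introduce a penalized nonlinearity $g_\varepsilon$ which agrees with $|u|^{p-2}u$ where the concentration function is close to its minimum and is subcritical/truncated elsewhere, and consider the associated functional $\mathcal{F}_\varepsilon^{\mathrm{pen}}$. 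The point of the penalization is that $\mathcal{F}_\varepsilon^{\mathrm{pen}}$ satisfies the Palais–Smale condition, so a mountain-pass or Nehari-constrained minimization produces a critical point $u_\varepsilon$ with $\varepsilon^{-N}\mathcal{F}_\varepsilon^{\mathrm{pen}}(u_\varepsilon) \to \inf_\Omega \mathcal{C}$, using as comparison functions suitably translated, rescaled, and gauge-corrected copies of the minimizer of the limiting problem $(\mathcal{R}_{V(x_0), DA(x_0)})$ whose energy is exactly $\mathcal{C}(x_0) = \inf_\Omega \mathcal{C}$.

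**Next I would** perform the concentration analysis. Choosing $x_\varepsilon$ to be (roughly) a maximum point of $|u_\varepsilon|$, one rescales $v_\varepsilon(y) = u_\varepsilon(x_\varepsilon + \varepsilon y)$, together with the gauge change that replaces $A/\varepsilon^2$ by a vector potential converging to the linear form $DA(x_0)$; the rescaled functions solve (on expanding domains) an equation converging to $(\mathcal{R}_{V_*, A_*})$. A uniform $L^\infty$ bound plus elliptic estimates (Kato's inequality / diamagnetic inequality to kill the magnetic term, then standard $W^{2,q}$–$C^{1,\alpha}$ bootstrap) give local convergence of $v_\varepsilon$ to a nontrivial limit $v$, which is a minimizer on the Nehari manifold of the limiting functional at the limiting parameters. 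Comparing energies forces $V(x_\varepsilon) \to V_*$ and $DA(x_\varepsilon) \to A_*$ with $\mathcal{E}(V_*, A_*) = \liminf \varepsilon^{-N}\mathcal{F}_\varepsilon(u_\varepsilon) \le \inf_\Omega \mathcal{C}$, and since $\mathcal{C} = \mathcal{E}(V(\cdot), DA(\cdot)) \ge \inf_\Omega \mathcal{C}$ everywhere, one gets $\mathcal{C}(x_\varepsilon) \to \inf_\Omega \mathcal{C}$ and, by continuity of $\mathcal{C}$, that any limit point of $(x_\varepsilon)$ is a minimizer of $\mathcal{C}$. This step also yields that $x_\varepsilon$ stays in the penalization-inactive region for $\varepsilon$ small.

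**The final step** is to upgrade the penalized solution to a genuine solution of $(\mathcal{P}_\varepsilon)$: one must show that $u_\varepsilon$ is exponentially small (in a suitable magnetic sense) away from $x_\varepsilon$, so that the penalized nonlinearity $g_\varepsilon$ coincides with $|u_\varepsilon|^{p-2}u_\varepsilon$ on all of $\Omega$. This is done via a comparison-function / barrier argument: using the diamagnetic inequality, $|u_\varepsilon|$ is a subsolution of a linear equation $-\varepsilon^2\Delta w + cV w \le 0$ outside a large ball around $x_\varepsilon$, and a standard exponential-decay barrier gives $\|u_\varepsilon\|_{L^\infty(\Omega \setminus B_\rho(x_\varepsilon))} \to 0$ as $\varepsilon \to 0$, $\rho/\varepsilon \to \infty$. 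Once the penalization is inactive, $u_\varepsilon$ solves $(\mathcal{P}_\varepsilon)$ and $\mathcal{F}_\varepsilon(u_\varepsilon) = \mathcal{F}_\varepsilon^{\mathrm{pen}}(u_\varepsilon)$, closing the argument.

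**The main obstacle** is the magnetic term with its strong scaling $A/\varepsilon^2$: unlike the mild-field case, the magnetic potential does not disappear in the limit but converges to the nontrivial linear form $DA(x_0)$, so one cannot simply gauge it away, and the limiting problem genuinely depends on $A_*$. Handling this requires the gauge invariance of the limiting energy (stated in the excerpt) to identify $\mathcal{E}(V(x), DA(x))$ as the correct local energy, the continuity of $\mathcal{E}$ (proposition cited above) to pass limits in the parameters, and a version of Kato's inequality compatible with the magnetic covariant derivative at every stage where elliptic regularity or a maximum principle is invoked. I also expect the uniform $L^\infty$ bound on the (rescaled) solutions — needed both for the regularity bootstrap and for the decay estimate — to require some care because of the unbounded magnetic potential before rescaling.
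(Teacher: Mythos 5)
Your step 3 contains a circularity that is the real gap. You derive the uniform smallness of \(u_\varepsilon\) away from \(x_\varepsilon\) from a barrier comparison, but the differential inequality \(-\varepsilon^2\Delta w + cVw \le 0\) for \(w=\abs{u_\varepsilon}\) outside \(B_{\varepsilon R}(x_\varepsilon)\) only holds once you already know \(\abs{u_\varepsilon}^{p-2}\le (1-c)V\) there: inside the region where the nonlinearity is untruncated (all of \(\Omega\) in the global theorem, all of \(\Lambda\) in your penalized version) the right-hand side is the full power \(\abs{u_\varepsilon}^{p-1}\), and it can only be absorbed into \(V\abs{u_\varepsilon}\) after one knows \(\norm{u_\varepsilon}_{L^\infty(\Omega\setminus B_{\varepsilon R}(x_\varepsilon))}\to 0\) --- which is precisely what the barrier is supposed to prove. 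Your step 2 does not supply this either: the blow-up at (a maximum point) \(x_\varepsilon\) controls the profile near \(x_\varepsilon\) only, and ``comparing energies'' as stated does not exclude a second bump at points \(y_\varepsilon\) with \(\abs{y_\varepsilon-x_\varepsilon}/\varepsilon\to\infty\). The missing ingredients are exactly the paper's propositions~\ref{propositionLowerBound} and~\ref{propositionVanishingSmallBalls}: a lower bound \(\liminf \varepsilon^{-N}\mathcal{G}_\varepsilon(u_\varepsilon)\ge\sum_{i=1}^M\mathcal{C}(x^i_*)\) for \(M\) separated concentration points, which combined with the upper bound \(\limsup\varepsilon^{-N}c_\varepsilon\le\inf\mathcal{C}\) and \(\inf\mathcal{C}>0\) rules out \(M\ge 2\), together with lemma~\ref{lemmaEquivalenceLpLinfty} (Kato's inequality plus a scalar subsolution estimate turning smallness of \(\varepsilon^{-N}\int_{B_{\varepsilon\rho}}\abs{u_\varepsilon}^p\) into \(L^\infty\) smallness), which upgrades the absence of secondary bumps in the mean to the uniform statement. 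Two further points in step 2 are glossed over: since the functional is indefinite, \(\liminf\varepsilon^{-N}\mathcal{F}_\varepsilon(u_\varepsilon)\ge\mathcal{E}(V_*,A_*)\) does not follow from local convergence alone --- one must show the contribution of the outer region is asymptotically nonnegative (the paper tests with \(\psi_{n,R}^2u_n\)); and with only weak convergence available you cannot assert the limit profile is a minimizer of the limiting problem, nor do you need to: the inequality \(\sup_{t>0}\mathcal{I}_{V_*,A_*}(tv_*)\ge\mathcal{C}(x_*)\) suffices.

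Structurally, you have also taken the harder route: for theorem~\ref{theoremGlobal} the paper takes \(\Lambda=\Omega\), so the penalization potential vanishes identically, the penalized problem \emph{is} \eqref{problemMNLSE}, existence follows from the mountain pass (proposition~\ref{propositionExistencePenalized}), and no barrier construction or removal of the penalization is needed; your scheme (penalization active away from near-minimizers of \(\mathcal{C}\), then decay estimates to undo it) is the machinery of theorem~\ref{theoremLocal} and commits you to the decay step criticized above. Finally, be careful with the proposed uniform \(W^{2,q}\)--\(C^{1,\alpha}\) bootstrap: it can only be run after an \(\varepsilon\)-dependent gauge correction around each point under consideration, and the paper deliberately confines regularity theory to the modulus (Kato plus De Giorgi--Nash--Moser) because uniform estimates on the complex rescaled solutions are delicate in the strong-field r\'egime; for the uniform vanishing statement such estimates are needed at arbitrary points, not only at \(x_\varepsilon\).
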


Since the concentration function \(\mathcal{C}\) is continuous and the set \(\Bar{\Omega}\) is compact, the function \(\mathcal{C}\) achieves its minimum on the set \(\Bar{\Omega}\). Theorem~\ref{theoremGlobal} can be rephrased by saying that the solution has a single spike that concentrates as \(\varepsilon \to 0\) towards the set of minimum points of the function \(\mathcal{C}\) on the set \(\Bar{\Omega}\).

In the particular case where the potential \(V\) is constant, the point of concentration is determined by the magnetic field \(d A\) alone. In particular, if the magnetic field \(d A\) vanishes somewhere, then the solutions concentrate around its zeroes. The known properties of \(\mathcal{E}\) are summarized in section~\ref{sectionLimiting}.

The result allows to obtain that the Lorentz force given by 
\eqref{eqLorentzDipole} vanishes in the semi-classical limit. Indeed, if \(x_* 
\in \Omega\) is a cluster point of the family 
\((x_\varepsilon)_{\varepsilon>0}\), then by taking a sequence we can assume 
that the sequence \((x_{\varepsilon_n})_{n \in \N}\) converges to \(x_*\) and, 
by the results in section~\ref{sectionAsymptotics}, that the translated 
rescaled sequence \((u_{\varepsilon_n} (x_{\varepsilon_n} + \varepsilon_n 
\cdot))_{n \in \N}\) converges to a solution \(v_*\) of 
the limiting problem \eqref{problemLimit}. By 
proposition~\ref{propositionLimitingLorentzDipole}, the 
equation~\eqref{eqLorentzDipole} is satisfied with \(q\) and \(\mu\) being the 
quantum mechanical charge and magnetic moment.

Mathematically, the existence of solutions to \eqref{problemMNLSE} is classical \cite{EstebanLions1989}. The study of the asymptotics of solutions as \(\varepsilon \to 0\) brings several problems. First the structure of the set of minimizers in \eqref{equationDefinitionE} is not known. In fact, there is no reason to believe that minimizers should be nondegenerate or unique up to translations, or even that they should be radial. We will thus develop arguments that do not depend on any structure of the set of groundstates of the limiting problem.

A second difficulty is that the strong magnetic field is large enough to be an 
obstacle to regularity estimates on rescaled solutions. To illustrate this, 
we observe that if, to fix the ideas, \(\Omega = \R^N\), then the function 
\(v_\varepsilon\) defined for \(y \in \R^N\) by \(v_\varepsilon (y) = 
u_\varepsilon (\varepsilon y)\) satisfies for each \(y \in \R^N\) the equation
\[
  -\Delta v_\varepsilon (y) - \frac{2 \imath}{\varepsilon^2} \scalprod{D v_\varepsilon (y)}{A (\varepsilon y)} - \frac{\imath d^* A (\varepsilon y)}{\varepsilon}  v_\varepsilon (y) + \frac{\abs{A (\varepsilon y)}^2}{\varepsilon^4} v_\varepsilon (y) + V (\varepsilon y) v_\varepsilon (y) = \abs{v_\varepsilon (y)}^{p - 2} v_\varepsilon (y).
\]
Even if we may assume by a suitable gauge transformation that \(\abs{A (\varepsilon y)} \le C \abs{\varepsilon y}\) for small \(y\), we still do not have locally uniformly bounded coefficients. 
In order to bypass this problem, we will arrange our proof in order to limit the use of regularity theory to estimates on the modulus \(\abs{v_\varepsilon}\) in \(L^\infty\) by the Kato inequality and by the De Giorgi--Nash--Moser regularity theory. In particular, instead of having compactness in the uniform norm, we will just have some sufficient condition for uniform convergence to \(0\).

A last challenge is that there is no notion of positive solutions for limiting problems like \eqref{problemLimit}. This rules out Liouville-type theorem based on comparison and prevents us in fact of using Liouville theorems in blowup arguments.

\bigbreak

Our second result is a local concentration result.

\begin{theorem}
\label{theoremLocal}
Assume that \(\frac{1}{2} - \frac{1}{N} < \frac{1}{p} < \frac{1}{2}\) and that either \(\Omega \subseteq \R^N\) is bounded, \(\frac{1}{p} < 1 - \frac{2}{N}\) or that  
\[
  \liminf_{\abs{x} \to \infty} V (x) \abs{x}^2 > 0.
\]
If \(\Lambda \subset \Omega\) is open, bounded and not empty, and satisfies
\[
  \inf_{\partial \Lambda} \mathcal{C} > \inf_{\Lambda} \mathcal{C},  
\]
if \(V \in C (\Bar{\Lambda})\), \(\inf_\Lambda V >0\), \(A \in C^1 (\Bar{\Lambda})\) and \(\Omega\) has a \(C^1\) boundary in a neighbourhood of \(\Bar{\Lambda}\),
then there exists a family of solutions \((u_\varepsilon)_{\varepsilon > 0}\) of \eqref{problemMNLSE} and a family of points \((x_\varepsilon)_{\varepsilon > 0}\) such that 
\begin{align*}
\lim_{\substack{R \to \infty\\ \varepsilon \to 0}} \norm{u_\varepsilon}_{L^\infty (\Omega \setminus B_\rho (x_\varepsilon))} &= 0,&
  & \lim_{\varepsilon \to 0} \varepsilon^{-N} \mathcal{F}_\varepsilon (u_\varepsilon)
  = \lim_{\varepsilon \to 0} \mathcal{C} (x_\varepsilon) = \inf_{\Lambda} \mathcal{C}.  
\end{align*}
\end{theorem}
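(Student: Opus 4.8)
The plan is to use the penalization method of del Pino and Felmer, adapted to the magnetic problem at the strong-field scale and localized so that the penalization is inactive precisely on \(\Lambda\); the gap condition \(\inf_{\partial \Lambda} \mathcal{C} > \inf_{\Lambda} \mathcal{C}\) is what will confine the concentration point to \(\Lambda\).

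\emph{Step 1 (penalized problem).} Fix a constant \(\kappa > 1\) and define, for \(x \in \Omega\) and \(s \geq 0\), a penalized nonlinearity \(g_\varepsilon\) that coincides with \(s \mapsto s^{p - 1}\) for \(x \in \Lambda\) and equals \(\min \{ s^{p - 1}, \tfrac{V(x)}{\kappa} s \}\) for \(x \in \Omega \setminus \Lambda\), extended as an odd function of \(s\); let \(G_\varepsilon (x, \cdot)\) be its primitive. The penalized functional \(\mathcal{F}^{\mathrm{pen}}_\varepsilon (u) \defeq \tfrac{1}{2} \norm{u}^2 - \int_\Omega G_\varepsilon (x, \abs{u})\) is of class \(C^1\) on \(H^1_{V, A/\varepsilon^2} (\Omega)\), has a mountain-pass geometry, and — using the assumptions on \(\Omega\), on \(p\) and on the decay of \(V\) at infinity, which guarantee the relevant embeddings and control of the mass near infinity — satisfies the Palais--Smale condition. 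It therefore has a nontrivial critical point \(u_\varepsilon\) at the minimax level \(c_\varepsilon\).

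\emph{Step 2 (energy upper bound).} Choose \(x_0 \in \Lambda\) with \(\mathcal{C}(x_0)\) arbitrarily close to \(\inf_{\Lambda} \mathcal{C}\) and a ground state \(v\) of \((\mathcal{R}_{V(x_0), D A(x_0)})\). After a gauge transformation, cutting \(v\) off on a small ball \(B_\delta (x_0) \subset \Lambda\) and rescaling by \(y \mapsto x_0 + \varepsilon y\) produces a competitor showing \(\varepsilon^{-N} c_\varepsilon \leq \mathcal{C}(x_0) + o(1)\); since \(\mathcal{E}\) and \(\mathcal{C}\) are continuous, \(\limsup_{\varepsilon \to 0} \varepsilon^{-N} c_\varepsilon \leq \inf_{\Lambda} \mathcal{C}\).

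\emph{Step 3 (concentration and matching lower bound).} Along a sequence \(\varepsilon_n \to 0\) one extracts, using the asymptotic analysis of section~\ref{sectionAsymptotics}, points \(x_{\varepsilon_n}\) (roughly where \(\abs{u_{\varepsilon_n}}\) is largest) with \(x_{\varepsilon_n} \to x_* \in \Bar{\Lambda}\) such that, after a gauge change, \(u_{\varepsilon_n} (x_{\varepsilon_n} + \varepsilon_n \cdot) \weakto v_*\) with \(v_*\) a nonzero solution of \((\mathcal{R}_{V(x_*), D A(x_*)})\); weak lower semicontinuity of the magnetic Dirichlet and mass terms together with a Brezis--Lieb splitting of the \(L^p\) term gives \(\mathcal{C}(x_*) \leq \mathcal{I}_{V(x_*), D A(x_*)}(v_*) \leq \liminf_n \varepsilon_n^{-N} c_{\varepsilon_n}\). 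With Step 2 and \(\mathcal{C}(x_*) \geq \inf_{\Lambda} \mathcal{C}\) this forces \(\mathcal{C}(x_*) = \inf_{\Lambda} \mathcal{C}\), \(\varepsilon_n^{-N} c_{\varepsilon_n} \to \inf_{\Lambda} \mathcal{C}\), and hence \(x_* \in \Lambda\) by the hypothesis \(\inf_{\partial \Lambda} \mathcal{C} > \inf_{\Lambda} \mathcal{C}\); so \(x_{\varepsilon_n} \in \Lambda\) for \(n\) large and all the energy concentrates near \(x_{\varepsilon_n}\), the remaining mass outside any ball \(B_{R \varepsilon_n}(x_{\varepsilon_n})\) tending to \(0\) as \(R \to \infty\).

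\emph{Step 4 (decay and removal of the penalization).} Because the strong magnetic field prevents uniform interior estimates for the rescaled equation, one argues only on the modulus: Kato's inequality gives \(-\varepsilon^2 \Delta \abs{u_\varepsilon} + V \abs{u_\varepsilon} \leq g_\varepsilon (x, \abs{u_\varepsilon})\) in \(\Omega\), hence \(-\varepsilon^2 \Delta \abs{u_\varepsilon} + (1 - \tfrac{1}{\kappa}) V \abs{u_\varepsilon} \leq 0\) on \(\Omega \setminus \Lambda\). Combining this with the energy concentration of Step 3 and a De Giorgi--Nash--Moser bound for \(\abs{u_\varepsilon}\) together with suitable comparison functions, one obtains the sufficient condition for uniform smallness \(\norm{u_\varepsilon}_{L^\infty (\Omega \setminus B_{R \varepsilon}(x_\varepsilon))} \to 0\) as \(R \to \infty\) and \(\varepsilon \to 0\); in particular \(\abs{u_\varepsilon}\) lies below the penalization threshold on \(\Omega \setminus \Lambda\) for \(\varepsilon\) small, so \(g_\varepsilon (x, \abs{u_\varepsilon}) = \abs{u_\varepsilon}^{p - 1}\) everywhere. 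Thus \(u_\varepsilon\) solves \eqref{problemMNLSE} and \(\mathcal{F}_\varepsilon (u_\varepsilon) = \mathcal{F}^{\mathrm{pen}}_\varepsilon (u_\varepsilon) = c_\varepsilon\), which with Steps 2 and 3 gives \(\varepsilon^{-N} \mathcal{F}_\varepsilon (u_\varepsilon) \to \inf_{\Lambda} \mathcal{C}\) and \(\mathcal{C}(x_\varepsilon) \to \inf_{\Lambda} \mathcal{C}\).

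\emph{Main obstacle.} The delicate part is carrying out Steps 3 and 4 without any nondegeneracy, uniqueness, radial symmetry, positivity or Liouville property for the limiting equation \((\mathcal{R}_{V_*, A_*})\), and without locally uniform bounds on the coefficients of the rescaled problem: the single-bubble concentration, the exclusion of splitting or escape of mass to infinity, and the matching lower bound \(\mathcal{C}(x_*) \leq \liminf \varepsilon^{-N} c_\varepsilon\) must all be obtained from purely variational comparisons and from the \(L^\infty\) decay estimate on \(\abs{u_\varepsilon}\) rather than from compactness in the uniform norm.
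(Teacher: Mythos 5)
Your outline follows the same general penalization strategy as the paper, but Step 1 contains a genuine gap: you choose the classical del Pino--Felmer penalization, with threshold \(\tfrac{V(x)}{\kappa}s\) outside \(\Lambda\). The theorem, however, only assumes \(\inf_{\Lambda} V>0\): the potential \(V\) may decay at infinity (only \(\liminf_{\abs{x}\to\infty}V(x)\abs{x}^2>0\), or no decay condition at all when \(\tfrac1p<1-\tfrac2N\) or \(\Omega\) is bounded) and may even vanish on parts of \(\Omega\setminus\Lambda\). With your threshold, the removal of the penalization in Step 4 breaks down: the decay you can establish for \(\abs{u_\varepsilon}\) far from \(x_\varepsilon\) by comparison with supersolutions of \(-\varepsilon^2\Delta W+(1-\tfrac1\kappa)VW\ge 0\) is in general only polynomial, of order \(\abs{x}^{-(N-2)}\), so the required inequality \(\abs{u_\varepsilon(x)}^{p-2}\le V(x)/\kappa\) cannot hold wherever \(V\) decays faster than \(\abs{x}^{-(N-2)(p-2)}\), and is outright impossible where \(V=0\) unless \(u_\varepsilon\) vanishes there. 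The same choice also weakens your Palais--Smale argument: without \(\inf_\Omega V>0\), the term \(\int_{\Omega\setminus\Lambda}V\abs{u}^2\) gives no compactness and your stated hypotheses do not "control the mass near infinity" for this functional. (Only in the sub-case \(\liminf V\abs{x}^2>0\), where barriers with decay \((1+\abs{x}^2)^{-\nu/\varepsilon}\) are available, could your scheme be salvaged.)

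The paper's fix is precisely to penalize with a threshold independent of \(V\): the Hardy-type potential \(H(x)\approx \abs{x}^{-2}\bigl(\log\abs{x}\bigr)^{-2-\beta}\) supported in \(\Omega\setminus\Lambda\) and the cutoff \(\min\bigl(\mu\varepsilon^2H(x),\abs{s}^{p-2}\bigr)s\). Hardy's inequality together with the diamagnetic inequality makes \(\int\varepsilon^2H\abs{u}^2\) a small and \emph{compact} perturbation of the magnetic norm (lemma~\ref{lemmaPositivityMagnetic}), which yields coercivity and the Palais--Smale condition with no positivity of \(V\) outside \(\Lambda\); and the barriers of lemma~\ref{lemmaBarrier} are supersolutions of \(-\varepsilon^2(\Delta+\mu H)W+(1-\mu)VW\ge 0\), whose decay \((1+\abs{x}^2)^{-(N-2)/2}\) (or \((1+\abs{x}^2)^{-\nu/\varepsilon}\) under the quadratic decay assumption on \(V\)) is exactly what the hypotheses \(\tfrac1p<1-\tfrac2N\), \(\liminf V\abs{x}^2>0\) or \(\Omega\) bounded are used for, to conclude \(\abs{u_\varepsilon}^{p-1}\le\mu\varepsilon^2H\abs{u_\varepsilon}\) outside \(\Lambda\). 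A secondary remark on Step 3: selecting \(x_\varepsilon\) "where \(\abs{u_\varepsilon}\) is largest" and invoking a Brezis--Lieb splitting must be replaced, as in propositions~\ref{propositionLowerBound} and~\ref{propositionVanishingSmallBalls}, by the nonvanishing of \(\norm{u_\varepsilon}_{L^\infty(\Lambda)}\) obtained from the equation itself and the equivalence between \(L^p\)-mean smallness and uniform smallness proved via Kato's inequality and estimates for distributional subsolutions (lemma~\ref{lemmaEquivalenceLpLinfty}), since no compactness in the uniform norm is available in the strong-field regime.
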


In the two-dimensional case, the first assumption reduces to \(\Omega\) bounded or 
\[
 \liminf_{\abs{x} \to \infty} V (x) \abs{x}^2 > 0.
\]
It also follows from the assumption \(\inf_\Lambda V > 0\) and the diamagnetic inequality that 
\[
 \inf_{\Lambda} \mathcal{C} \ge \inf_{\Lambda} \mathcal{E} (V, 0) > 0.
\]
As for theorem~\ref{theoremGlobal}, the Lorentz force given by \eqref{eqLorentzDipole} vanishes in the semiclassical limit \(\varepsilon \to 0\).

When \(A = 0\), this reduces to the nonmagnetic case \citelist{\cite{MorozVanSchaftingen2009}\cite{MorozVanSchaftingen2010}}. In that case it has been shown that the assumption on the decay of \(V\) cannot be substantially improved.

Our construction of the solutions is variational. 
We follow the penalization scheme which was developped by M.\thinspace del Pino and P.\thinspace Felmer for the nonlinear Schr\"odinger equation
\citelist{\cite{delPinoFelmer1996}\cite{delPinoFelmer1997}\cite{delPinoFelmer1998}} and adapted to critical frequency for fast-decaying potentials \citelist{\cite{BonheureVanSchaftingen2006}\cite{BonheureVanSchaftingen2008}\cite{BonheureDiCosmoVanSchaftingen2012}\cite{DiCosmo2011}\cite{DiCosmoVanSchaftingen2013}\cite{MorozVanSchaftingen2010}\cite{YinZhang2009}} and 
to the nonlinear Schr\"odinger equation with a mild magnetic field \citelist{\cite{AlvesFigueiredoFurtado2011}\cite{CingolaniSecchi2005}}.

We combine both adaptations for the first time. 
After defining the penalization and proving existence of solutions to the penalized problem, we need to show that solutions are small enough in the penalized region to satisfy the original unpenalized problem. 
The classical strategy is to obtain asymptotic estimates on the action, which give some information about the decay of integrals on the solutions on balls of radius of the order of \(\varepsilon\).
In the mild magnetic field r\'egime, those can be improved by local uniform Schauder estimates \cite{CingolaniSecchi2005}; as mentioned above the coefficients of the rescaled linear operator are not controlled sufficiently to have uniform Schauder estimates.
When the concentration points are \emph{global minimizers} of the electric potential in the mild magnetic field r\'egime, the limiting functional bounds from below the penalized functional, allowing to show the strong convergence of rescaled solutions to a solution of the limiting problem in the energy space; uniform bounds can be derived by a suitable Moser iteration scheme on outer domains \cite{AlvesFigueiredoFurtado2011}. In the strong magnetic field r\'egime we rely instead on comparison principles for the modulus by Kato's inequality \cite{Kato1972}.

The lack of information concerning the limiting problem has forced us to prove the results with relying on the minimal properties that we could await from it. We do not even require the existence of solutions to the limiting problem; the only property that we use is the upper semicontinuity of the action of the limiting problem.

Our method is quite flexible, allowing us to treat unbounded domains, fast-decaying electric potentials and unbounded electromagnetic fields.

We have made two choices in the presentation that might be unusual in the community of analysts but that we think should highlight the geometrical features of the problem. First, we used derivatives and differential forms instead of gradients and vector fields. Next, we work on \(\C\) as a two-dimensional Euclidean vector field --- in particular \emph{all the scalar products are real} --- except for the multiplication by the imaginary unit \(\imath\) which can be thought in fact as the application of a skew-symmetric linear mapping.

\section{Construction of solutions to a penalized problem}

\subsection{Definition of the penalized problem}

In order to prepare the proof of theorem~\ref{theoremLocal}, we define and solve a penalized problem following the strategy of M.\thinspace{}del Pino and P.\thinspace{}Felmer \cite{delPinoFelmer1996} and its adaptation to critical potentials \citelist{\cite{BonheureVanSchaftingen2008}\cite{MorozVanSchaftingen2010}\cite{YinZhang2009}} and to the stationary magnetic nonlinear Schr\"odinger equation \citelist{\cite{CingolaniSecchi2005}\cite{AlvesFigueiredoFurtado2011}}. The reader only interested in the proof of theorem~\ref{theoremGlobal} can go directly to section~\ref{sectionAsymptotics}.

Without loss of generality, we assume that \(0 \in \Lambda\).
Following V.\thinspace{}Moroz and J.\thinspace{}Van Schaftingen \citelist{\cite{MorozVanSchaftingen2009}*{(12)}\cite{MorozVanSchaftingen2010}*{\S 3.1 and \S 6.1}\cite{BonheureDiCosmoVanSchaftingen2012}*{\S 3}\cite{DiCosmoVanSchaftingen2013}*{\S 2.1}}, the \emph{penalization potential} \(H : \Omega \to \R\) is defined at every point \(x \in \Omega\) by
\begin{equation*}
 H(x) \defeq  \frac{\chi_{\Omega \setminus \Lambda}(x) \bigl(\log\frac{\rho}{\rho_0} \bigr)^\beta}{4 \abs{x}^2 \bigl(\log \frac{\abs{x}}{\rho_0}\bigr)^{2+\beta}},
\end{equation*}
where the point \(x_0 \in \Lambda\) and the radius \(\rho > 0\) are choosen so that
\(\overline{B}_\rho (x_0) \subset \Lambda\), the parameters \(\beta > 0 \) and \(\rho_0 \in (0, \rho)\) are fixed, and \(\chi_{\Omega \setminus \Lambda}\) denotes the characteristic function of the set \(\Omega \setminus \Lambda\). 
If \(\Lambda = \Omega\), as it will be the case in the proof of 
theorem~\ref{theoremGlobal}, the penalization potential vanishes identically on 
its whole domain \(\Omega\).
 
By the classical Hardy inequality, the operator \(-\Delta - H\) satisfies a positivity principle \citelist{\cite{MorozVanSchaftingen2010}*{lemma~3.1}\cite{DiCosmoVanSchaftingen2013}*{lemma~2.1}}.

\begin{lemma}[Smallness and compactness of the penalization potential]
\label{lemmaPositivity}
There exists \(\Bar{\varepsilon} > 0\) such that for every \(\varepsilon \in (0, \Bar{\varepsilon}]\) and each function \(\varphi \in C^1_c (\Omega; \R)\), 
\begin{equation*}
 \int_{\Omega} \varepsilon^2 H \abs{\varphi}^2
 \le \int_{\Omega} \varepsilon^2 \abs{D \varphi}^2 + V \abs{\varphi}^2.
\end{equation*}
Moreover the corresponding embedding \(H^1_{V} (\Omega) \subset L^2 (\Omega, H (x)\dif x)\) is compact.
\end{lemma}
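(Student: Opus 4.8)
The inequality is a logarithmic refinement of the classical Hardy inequality, which I would prove by the ground state substitution. After a translation we may assume \(x_0 = 0\), so that \(B_\rho \subset \Lambda\); hence \(H\) vanishes on \(B_\rho\) and \(\supp H \subset \Omega \setminus \Lambda \subset \{\abs{x} > \rho\}\). Set
\[
 \phi (x) \defeq \abs{x}^{-\frac{N - 2}{2}} \Bigl(\log \tfrac{\abs{x}}{\rho_0}\Bigr)^{1/2},
\]
which is smooth and positive on \(\R^N \setminus \Bar{B}_{\rho_0}\). A direct computation of the radial Laplacian gives
\[
 -\Delta \phi = \Bigl( \frac{(N - 2)^2}{4\abs{x}^2} + \frac{1}{4\abs{x}^2 (\log \frac{\abs{x}}{\rho_0})^2} \Bigr)\phi \ge \frac{\phi}{4\abs{x}^2 (\log \frac{\abs{x}}{\rho_0})^2} \ge H\phi \qquad \text{on } \{\abs{x} > \rho\},
\]
the last inequality holding because \((\log \frac{\abs{x}}{\rho_0})^\beta \ge (\log \frac{\rho}{\rho_0})^\beta\) when \(\abs{x} \ge \rho\); this is exactly where the normalisation of \(H\) is used, and \(\phi\) is thus a positive supersolution of \(-\Delta - H\) on \(\supp H\).

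Now pick \(\eta \in C^\infty (\R^N; [0,1])\) vanishing on a neighbourhood of \(\Bar{B}_\rho\) and equal to \(1\) outside a ball \(B_{\rho'}\) with \(\Bar{B}_{\rho'} \subset \Lambda\). Then \(\eta\varphi \in C^1_c(\{\abs{x} > \rho\} \cap \Omega)\), \(\eta\varphi = \varphi\) on \(\supp H\) (since \(\supp H \subset \R^N \setminus B_{\rho'}\)), and \(\supp D\eta \subset \Lambda\). Writing \(\eta\varphi = \phi\psi\) with \(\psi \defeq \eta\varphi/\phi \in C^1_c(\{\abs{x} > \rho\} \cap \Omega)\), the identity \(\abs{D(\eta\varphi)}^2 = \phi^2 \abs{D\psi}^2 + D\phi \cdot D(\phi \psi^2)\), an integration by parts (legitimate since \(\phi\) is smooth and positive near \(\supp (\eta\varphi)\)) and \(-\Delta\phi \ge H\phi\) give \(\int_\Omega H\abs{\varphi}^2 = \int_\Omega H\abs{\eta\varphi}^2 \le \int_\Omega \abs{D(\eta\varphi)}^2\). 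Expanding and integrating the cross term by parts, \(\int_\Omega \abs{D(\eta\varphi)}^2 = \int_\Omega \eta^2 \abs{D\varphi}^2 - \int_\Omega \eta\,(\Delta\eta)\abs{\varphi}^2 \le \int_\Omega \abs{D\varphi}^2 + C_\eta \int_\Lambda \abs{\varphi}^2\) with \(C_\eta \defeq \norm{\eta\Delta\eta}_{L^\infty}\). Since \(\inf_\Lambda V > 0\) the last term is \(\le \frac{C_\eta}{\inf_\Lambda V}\int_\Omega V\abs{\varphi}^2\); multiplying by \(\varepsilon^2\) and setting \(\Bar{\varepsilon} \defeq (\inf_\Lambda V/C_\eta)^{1/2}\) yields the asserted bound for \(\varepsilon \in (0,\Bar{\varepsilon}]\). (If \(\Lambda = \Omega\) then \(H \equiv 0\) and there is nothing to prove.)

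For the compactness I would split the weight at a large radius. For \(R > \rho'\) and \(\abs{x} > R\) one has \(H(x) \le \delta_R\,\frac{1}{4\abs{x}^2 (\log \frac{\abs{x}}{\rho_0})^2}\) with \(\delta_R \defeq \bigl(\log(\rho/\rho_0)/\log(R/\rho_0)\bigr)^\beta \to 0\); since \(\phi\) is also a supersolution of \(-\Delta - \chi_{\{\abs{x} > R\}}/(4\abs{x}^2(\log \frac{\abs{x}}{\rho_0})^2)\), the same computation as above gives \(\int_{\Omega \setminus B_R} H\abs{\varphi}^2 \le \delta_R \bigl(\int_\Omega \abs{D\varphi}^2 + \frac{C_\eta}{\inf_\Lambda V}\int_\Omega V\abs{\varphi}^2\bigr)\) for all \(\varphi \in C^1_c(\Omega)\), hence by density for all \(\varphi \in H^1_V(\Omega)\); so the \(H\)-mass outside \(B_R\) is uniformly small on norm-bounded subsets as \(R \to \infty\). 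On each \(B_R \cap \Omega\) the potential \(H\) is bounded (it vanishes near \(\abs{x} = 0\) and near \(\abs{x} = \rho_0\)), and, using the homogeneous boundary condition together with \(\inf_\Lambda V > 0\) (and the decay hypothesis on \(V\) when \(\Omega\) is unbounded), a norm-bounded sequence of \(H^1_V(\Omega)\) is bounded in \(H^1(B_R \cap \Omega)\), hence has a subsequence converging in \(L^2(B_R \cap \Omega)\). A diagonal argument over \(R \to \infty\) then produces a subsequence converging in \(L^2(\Omega, H(x)\dif x)\).

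The step I expect to be most delicate is the bookkeeping on the annulus \(B_{\rho'} \setminus B_\rho\): the Hardy supersolution \(\phi\) is only available away from \(B_\rho\), so the cutoff \(\eta\) is unavoidable and one pays the commutator \(\int \abs{D\eta}^2 \abs{\varphi}^2\), which can be absorbed by \(V\) only after taking \(\varepsilon\) small — this is the sole reason for the threshold \(\Bar{\varepsilon}\) and the sole use of \(\inf_\Lambda V > 0\) in the first part. By contrast the characteristic function \(\chi_{\Omega \setminus \Lambda}\) in \(H\) causes no difficulty, since it only makes \(H\) smaller while keeping it nonnegative.
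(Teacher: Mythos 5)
Your proof is correct, and on the inequality itself it is more self-contained than the paper's: the paper simply cites the logarithmic Hardy inequality of Moroz and Van Schaftingen, \(\frac{1}{4}\int_{\Omega}\abs{\varphi}^2\,\abs{x}^{-2}\bigl(\log\tfrac{\abs{x}}{\rho_0}\bigr)^{-2} \le \int_{\Omega}\abs{D\varphi}^2 + C\int_{B_\rho\setminus B_{\rho_0}}\abs{\varphi}^2\), bounds \(H\) pointwise by the left-hand weight on \(\Omega\setminus\Lambda\), and absorbs the annular remainder into \(\int V\abs{\varphi}^2\) for small \(\varepsilon\) using the positive lower bound of \(V\) there --- exactly the mechanism that produces your threshold \(\Bar{\varepsilon}\). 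You instead reprove that inequality by the ground-state substitution with the explicit supersolution \(\abs{x}^{-(N-2)/2}\bigl(\log\tfrac{\abs{x}}{\rho_0}\bigr)^{1/2}\) and a cutoff whose commutator term is supported in \(\Lambda\); your computation of \(-\Delta\phi\) and the two algebraic identities are correct, so this buys independence from the cited lemma at no cost in the constant. Your compactness argument (uniform tail factor \(\delta_R \to 0\), boundedness of \(H\), Rellich on bounded sets, diagonal extraction) is a quantitative rendering of the paper's one-line argument that \(H\) is bounded and \(H(x)\abs{x}^2\bigl(\log\abs{x}\bigr)^2 \to 0\) at infinity. One small repair: the local \(H^1\)-bound on \(B_R\cap\Omega\) should not be attributed to the homogeneous boundary condition or to a decay hypothesis on \(V\) --- no such hypothesis is part of this lemma --- but it does follow from what you already established, since your supersolution estimate (without the factor \(\chi_{\Omega\setminus\Lambda}\)) controls \(\int_{\Omega\setminus B_{\rho'}}\abs{u}^2\,\abs{x}^{-2}\bigl(\log\tfrac{\abs{x}}{\rho_0}\bigr)^{-2}\), whose weight is bounded below on \(B_R\setminus B_{\rho'}\), while \(\inf_{\Lambda}V>0\) controls \(\int_{B_{\rho'}}\abs{u}^2\); extending by zero outside \(\Omega\) then lets you apply Rellich on the ball \(B_R\) without worrying about the regularity of \(B_R\cap\Omega\). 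With that substitution your argument is complete.
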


The weighted Sobolev space \(H^1_{V} (\Omega)\) is defined to be the completion of \(C^1_c (\Omega)\) with respect to the norm defined for every function \(\varphi \in C^1_c (\Omega)\) by
\[
  \Bigl(\int_{\Omega} \abs{D \varphi}^2 + V \abs{\varphi}^2 \Bigr)^\frac{1}{2}.
\]

\begin{proof}%
[Proof of lemma~\ref{lemmaPositivity}]
There exists a constant \(C>0\) such that for every test function \(\varphi \in C^1_c(\Omega)\)  \cite{MorozVanSchaftingen2010}*{lemma~6.1},
\begin{equation*}
\frac{1}{4}\int_{\Omega}\frac{\abs{\varphi (x)}^2}{\abs{x}^2\big(\log\frac{\abs{x}}{\rho_0}\big)^2}\dif x \le \int_{\Omega}\abs{D \varphi}^2+C\int_{B_\rho \setminus B_{\rho_0}} \abs{\varphi}^2;
\end{equation*}
the inequality follows since \(\inf_{B_\rho} V > 0\).
Since the embedding \(H^1_\mathrm{loc} (\Omega) \subset L^2_\mathrm{loc} (\Omega)\) is compact, the potential \(H\) is bounded and \(\lim_{\abs{x} \to \infty} H (x) \abs{x}^2 (\log \abs{x})^2 = 0\), we conclude that the embedding \(H^1_V (\Omega) \subset L^2 (\Omega, H (x)\dif x) \) is compact.
\end{proof}

For each \(\varepsilon > 0\), we define the \emph{penalized nonlinearity} \(g_\varepsilon : \Omega \times \C \to \C\) for every \((x, s) \in \Omega \times \C\) by \citelist{\cite{CingolaniSecchi2005}*{(21)--(22)}\cite{AlvesFigueiredoFurtado2011}*{(2.3)}}
\[
 g_\varepsilon (x, s)  
  \defeq \chi_{\Lambda} (x) \abs{s}^{p - 2} s 
    + \chi_{\Omega \setminus \Lambda} (x) \min \bigl(\varepsilon^2 \mu H (x), \abs{s}^{p - 2} \bigr) s.
\]
The penalized nonlinearity \(g_\varepsilon\) is variational, that is, for every \((x, s) \in \Omega \times \C\),
\[
 g_\varepsilon (x, s) = \nabla_s G_\varepsilon (x, s),
\]
where the function \(G_\varepsilon : \Omega \times \C \to \R\) is defined for every \((x, s) \in \Omega \times \C\) by 
\[
 G_\varepsilon (x, s) = \chi_{\Lambda} (x) \frac{\abs{s}^{p}}{p} + \chi_{\Omega \setminus \Lambda} (x)
 \int_0^{\abs{s}} \min \bigl(\mu \varepsilon^2 H (x) t, t^{p - 1} \bigr)\dif t.
\]

The penalized nonlinearity has the following properties: for every \(\varepsilon > 0\),
\begin{align}
\tag{$g_1$} & g_\varepsilon(x, s)=o(\abs{s})  \text{ as \(s\to 0^+\) uniformly on compact subsets of \(\R^N\)};\\
\tag{$g_2$} &\abs{g_\varepsilon(x, s)} \le \abs{s}^{p - 1} \text{ for every \((x, s) \in \Omega \times \C\)};\\
\tag{$g_3$} &\abs{g_\varepsilon(x, s)} \le \mu \varepsilon^2 H(x)\abs{s} \text{ for every \((x, s)\in\Lambda\times\C\)};\\
\tag{$g_4$} &2G_\varepsilon(x, s)\le \scalprod{s}{g_\varepsilon(x, s)} \text{ for every \((x, s)\in \Omega \times \C\)};\\
\tag{$g_5$} & p G_\varepsilon(x, s)\le \scalprod{s}{g_\varepsilon(x, s)} \text{ for every \((x, s)\in\Lambda\times\C\)};\\
\tag{$g_6$} & G_\varepsilon (x, s) > 0 \text{ for every \((x, s) \in \Omega \times \bigl(\C\setminus \{0\}\bigr)\)}.
\end{align}
We also denote by \(\mathsuper{g}_\varepsilon\) and \(\mathsuper{G}_\varepsilon\) the corresponding superposition operators, that is, for every function \(u : \Omega \to \R\), the functions \(\mathsuper{g}_\varepsilon (u) : \Omega \to \C\) and \(\mathsuper{G}_\varepsilon (u) : \Omega \to \R\) are defined for every \(x \in \Omega\) by
\begin{align*}
  \mathsuper{g}_\varepsilon (u) (x) & = g_\varepsilon (x, u (x)) &
  & \text{ and }&
  \mathsuper{G}_\varepsilon (u) (x) & = g_\varepsilon (x, u (x)).  
\end{align*}
Since \(g_\varepsilon\) and \(G_\varepsilon\) are Carath\'eodory functions, the measurability of \(u\) implies the measurability of the functions \(\mathsuper{g}_\varepsilon (u)\) and \(\mathsuper{G}_\varepsilon (u)\).

\subsection{Existence of solutions to the penalized problem}

The \emph{penalized functional} \(\mathcal{G}_\varepsilon\) is defined 
on the space \(H^1_{V, A/\varepsilon^2} (\Omega) (\Omega; \C)\) 
for every function \(u \in H^1_{V, A/\varepsilon^2} (\Omega)\) by
\[
 \mathcal{G}_\varepsilon (u) \defeq \frac{1}{2} \int_{\R^N} \varepsilon^2 \abs{D_{A/\varepsilon^2} u}^2 + V \abs{u}^2 -
 \int_{\R^N} \mathsuper{G}_\varepsilon (u).
\]
In contrast with the nonmagnetic case, the domain of the penalized functional \(\mathcal{G}_{\varepsilon}\) depends in general on the parameter \(\varepsilon\).

We shall construct weak solutions to the \emph{penalized problem}
\begin{equation}
  \label{problemPenalized}\tag{$\mathcal{Q}_\varepsilon$}
  \left\{
  \begin{aligned}
    -\varepsilon^2 \Delta_{A/\varepsilon^2} u_\varepsilon + V u_\varepsilon & = \mathsuper{g}_\varepsilon (u_\varepsilon)&
      & \text{in \(\Omega\)},\\
    u_\varepsilon & = 0 &
      & \text{on \(\partial \Omega\)},
  \end{aligned}
  \right.
\end{equation}
that is, \(u_\varepsilon \in H^1_{A/\varepsilon^2} (\Omega)\) and for every \(v \in H^1_{A/\varepsilon^2} (\Omega)\)
\[
 \int_{\Omega} \varepsilon^2 \scalprod{D_{A/\varepsilon^2} u_\varepsilon}{D_{A/\varepsilon^2} v} + V \scalprod{u_\varepsilon}{v} = \int_{\Omega} \scalprod{\mathsuper{g}_\varepsilon (u_\varepsilon)}{v}.
\]

We first prove that the penalized functional \(\mathcal{G}_\varepsilon\) is well-defined and continuously differentiable.

\begin{lemma}[Well-definiteness and continuous differentiability of the functional]
\label{lemmaFunctional}
The functional \(\mathcal{G}_{\varepsilon}\) is well defined and continuously differentiable on the space \(H^1_{V, A/\varepsilon^2} (\Omega)\). 
Its critical points are weak solutions of the penalized problem \eqref{problemPenalized}.
\end{lemma}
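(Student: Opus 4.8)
The plan is to verify the three standard ingredients that make the direct method of the calculus of variations and the chain rule applicable: the integrals defining $\mathcal{G}_\varepsilon$ converge for every $u \in H^1_{V, A/\varepsilon^2}(\Omega)$, the nonlinear term $u \mapsto \int_\Omega \mathsuper{G}_\varepsilon(u)$ is of class $C^1$ on this space, and its derivative coincides with the weak form appearing in \eqref{problemPenalized}. The quadratic part $\frac{1}{2}\norm{u}^2$ is trivially $C^1$ with derivative $v \mapsto \int_\Omega \varepsilon^2 \scalprod{D_{A/\varepsilon^2}u}{D_{A/\varepsilon^2}v} + V\scalprod{u}{v}$ by definition of the norm, so the whole matter reduces to the superposition operator $\mathsuper{G}_\varepsilon$.

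First I would record the embedding that does all the work: by the diamagnetic inequality $\abs{D_{A/\varepsilon^2}u} \ge \abs{D\abs{u}}$ pointwise, so $\abs{u} \in H^1_V(\Omega)$ whenever $u \in H^1_{V, A/\varepsilon^2}(\Omega)$, with control of the norm; combined with the Sobolev embedding (valid since $\frac{1}{2} - \frac{1}{N} < \frac{1}{p} < \frac{1}{2}$ forces $2 < p < 2N/(N-2)$ when $N \ge 3$, and $p < \infty$ when $N \le 2$) and $\inf_{B_\rho} V > 0$ together with Lemma~\ref{lemmaPositivity} on the outer region, we get a continuous embedding $H^1_{V, A/\varepsilon^2}(\Omega) \hookrightarrow L^p(\Omega)$, and also $H^1_{V, A/\varepsilon^2}(\Omega) \hookrightarrow L^2(\Omega, H(x)\dif x)$ compactly. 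Then $(g_2)$ gives $\abs{\mathsuper{G}_\varepsilon(u)} \le \frac{1}{p}\abs{u}^p \in L^1(\Omega)$, so $\mathcal{G}_\varepsilon$ is well defined and real-valued. To get better than mere finiteness, I would also use $(g_3)$ to note that on $\Lambda$ one may alternatively bound $\abs{\mathsuper{G}_\varepsilon(u)} \le \frac{\mu\varepsilon^2}{2} H\abs{u}^2$ where relevant — this mixed bound $\abs{G_\varepsilon(x,s)} \le \min(\frac{1}{p}\abs{s}^p, \frac{1}{2}\mu\varepsilon^2 H(x)\abs{s}^2)$ valid on $\Omega \setminus \Lambda$ is exactly what localizes the nonlinearity into spaces where the embeddings are available.

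Next, for the $C^1$ property I would follow the classical Krasnoselskii-type argument: the Nemytskii operator $u \mapsto \mathsuper{g}_\varepsilon(u)$ maps $L^p(\Omega) \cap L^2(\Omega, H\dif x)$ continuously into $L^{p'}(\Omega) + L^2(\Omega, H^{-1}\dif x)$ (using $(g_2)$ and $(g_3)$ respectively, noting $g_\varepsilon$ is Carathéodory), hence the composition with the embeddings above is continuous from $H^1_{V,A/\varepsilon^2}(\Omega)$ into the dual space; then one shows by the mean value theorem applied to $t \mapsto G_\varepsilon(x, u(x) + t v(x))$, the pointwise identity $g_\varepsilon = \nabla_s G_\varepsilon$, and dominated convergence (the dominating function coming again from $(g_2)$/$(g_3)$ and Hölder) that the Gateaux derivative of $u \mapsto \int_\Omega \mathsuper{G}_\varepsilon(u)$ at $u$ in direction $v$ is $\int_\Omega \scalprod{\mathsuper{g}_\varepsilon(u)}{v}$; continuity of this Gateaux derivative in $u$ (which upgrades it to Fréchet differentiability and $C^1$) follows from the continuity of the Nemytskii operator just established. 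Finally the statement that critical points are weak solutions of \eqref{problemPenalized} is then immediate by comparing $\dualprod{\mathcal{G}_\varepsilon'(u)}{v} = 0$ for all $v$ with the definition of weak solution.

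I expect the main technical obstacle to be handling the $\varepsilon$-dependence of the domain and the fact that on the unbounded piece $\Omega \setminus \Lambda$ the nonlinearity is controlled not by an $L^p$ term but by the Hardy-type weight $H$; one must carefully split every estimate according to $\Lambda$ versus $\Omega \setminus \Lambda$ and invoke Lemma~\ref{lemmaPositivity} to absorb the weighted term, rather than relying on a single global Sobolev inequality as in the nonmagnetic bounded-domain case. The diamagnetic inequality is what makes the reduction to the (gauge-invariant, $\varepsilon$-independent) scalar space $H^1_V(\Omega)$ possible and thereby sidesteps the difficulty, mentioned in the introduction, that the rescaled magnetic operator has unbounded coefficients.
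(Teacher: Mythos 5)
Your proposal is correct and takes essentially the same route as the paper, whose proof simply invokes $(g_2)$, $(g_3)$ and the magnetic-space estimates of lemmas~\ref{lemmaPositivityMagnetic} and \ref{lemmaRescaledSobolev}: you rederive those embeddings from the diamagnetic inequality and then run the standard Nemytskii--Krasnoselskii argument for the $C^1$ property and the identification of critical points with weak solutions. The only slip is the claimed global embedding $H^1_{V,A/\varepsilon^2}(\Omega)\hookrightarrow L^p(\Omega)$, which fails in general on unbounded domains with decaying $V$ (only $L^p(\Lambda)$ is available), but this is harmless because your own splitting of the nonlinearity --- the $L^p$ bound on $\Lambda$ and the weighted bound $\tfrac{1}{2}\mu\varepsilon^2 H\abs{s}^2$ on $\Omega\setminus\Lambda$ --- is exactly what the estimates actually use.
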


Before proving the lemma, we recall the diamagnetic inequality which is a powerful tool to study magnetic problems.

\begin{lemma}[Diamagnetic inequality (see for example \citelist{\cite{LiebLoss}*{theorem 7.21}\cite{EstebanLions1989}*{(2.3)}})]
\label{lemmaDiamagnetic}
If \(u \in H^1_{V, A/\varepsilon^2} (\Omega)\), then \(\abs{u} \in H^1_V (\Omega)\) and 
\[
  \abs{D \abs{u}} \le \abs{D_{A/\varepsilon^2} u}
\]
\end{lemma}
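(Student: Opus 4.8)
\emph{Proof proposal.} The plan is to reduce the statement to a pointwise almost-everywhere inequality for a genuine locally Sobolev function, obtained by a standard regularization, and then to read off the assertion about the abstract completion.

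First I would identify the completion with a concrete space of functions. If \((u_n)_{n \in \N}\) is a Cauchy sequence in \(C^1_c (\Omega; \C)\) for the norm \(\norm{\cdot}\), then \((D_{A/\varepsilon^2} u_n)_{n \in \N}\) is Cauchy in \(L^2 (\Omega)\) and \((u_n)_{n \in \N}\) is Cauchy in \(L^2_\mathrm{loc} (\Omega)\) (the latter being part of the identification of \(H^1_{V, A/\varepsilon^2} (\Omega)\) with a space of functions, via \(\inf_K V > 0\) on compact sets \(K\) together with the Hardy inequality used in lemma~\ref{lemmaPositivity}); since \(A\) is locally bounded, \((u_n A/\varepsilon^2)_{n \in \N}\) is Cauchy in \(L^1_\mathrm{loc} (\Omega)\), and hence so is \((D u_n)_{n \in \N} = (D_{A/\varepsilon^2} u_n - \imath u_n A/\varepsilon^2)_{n \in \N}\). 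Thus the limit \(u\) belongs to \(W^{1,1}_\mathrm{loc} (\Omega; \C)\) with \(D u = D_{A/\varepsilon^2} u - \imath u A/\varepsilon^2 \in L^1_\mathrm{loc} (\Omega)\) and \(D_{A/\varepsilon^2} u \in L^2 (\Omega)\).

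Next comes the core computation. For \(\delta > 0\) set \(\abs{u}_\delta \defeq \sqrt{\abs{u}^2 + \delta^2} - \delta\). Since \(\abs{u}^2 = \scalprod{u}{u} \in W^{1,1}_\mathrm{loc} (\Omega)\) with \(D (\abs{u}^2) = 2 \re (\bar u \, D u)\) and \(t \mapsto \sqrt{t + \delta^2} - \delta\) is Lipschitz on \([0, \infty)\), the chain rule for Sobolev functions gives \(\abs{u}_\delta \in W^{1,1}_\mathrm{loc} (\Omega)\) and \(D \abs{u}_\delta = \re (\bar u \, D u)/\sqrt{\abs{u}^2 + \delta^2}\). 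Here I would use the two features stressed in the introduction --- the scalar products are real and multiplication by \(\imath\) is skew-symmetric --- to get \(\re (\bar u \, \imath u A/\varepsilon^2) = \scalprod{u}{\imath u} \, A/\varepsilon^2 = 0\), so that \(\re (\bar u \, D u) = \re (\bar u \, D_{A/\varepsilon^2} u)\); then the Cauchy--Schwarz inequality gives \(\abs{\re (\bar u \, D_{A/\varepsilon^2} u)} \le \abs{u} \, \abs{D_{A/\varepsilon^2} u}\) and therefore
\[
  \abs{D \abs{u}_\delta} = \frac{\abs{\re (\bar u \, D_{A/\varepsilon^2} u)}}{\sqrt{\abs{u}^2 + \delta^2}} \le \frac{\abs{u}}{\sqrt{\abs{u}^2 + \delta^2}} \, \abs{D_{A/\varepsilon^2} u} \le \abs{D_{A/\varepsilon^2} u}.
\]
Letting \(\delta \to 0\), one has \(\abs{u}_\delta \to \abs{u}\) in \(L^1_\mathrm{loc} (\Omega)\) and \(D \abs{u}_\delta \to \chi_{\{u \ne 0\}} \re (\bar u \, D u)/\abs{u}\) almost everywhere; since \(D \abs{u} = 0\) almost everywhere on \(\{u = 0\}\), this limit is the distributional derivative \(D \abs{u}\). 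The displayed bound then passes to the limit and yields \(\abs{D \abs{u}} \le \abs{D_{A/\varepsilon^2} u}\) almost everywhere on \(\Omega\); in particular \(D \abs{u} \in L^2 (\Omega)\), and together with \(\abs{u} \in L^2 (\Omega, V \dif x)\) --- which follows from \(\abs{u} = \lim_n \abs{u_n}\) in that space, using \(\bigabs{\abs{u_n} - \abs{u}} \le \abs{u_n - u}\) and \(V \ge 0\) --- one concludes \(\abs{u} \in H^1_V (\Omega)\).

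The algebraic computation is routine; I expect the delicate points to be the two ``soft'' ones flanking it: the identification in the first step of the abstract completion with a space of functions on which the chain rule makes sense, and, in the limit \(\delta \to 0\), the behaviour of \(\abs{u}\) on the zero set \(\{u = 0\}\) (vanishing of \(D \abs{u}\) there, and \(D \abs{u} = \re (\bar u \, D u)/\abs{u}\) elsewhere). The regularization by \(\sqrt{\abs{u}^2 + \delta^2}\) is precisely what disposes of the second. One could instead establish the pointwise inequality first for \(u \in C^1_c (\Omega; \C)\) and then transfer it to the completion, using Mazur's lemma to upgrade the weak \(L^2\)-convergence of \(D \abs{u_n}\) to almost-everywhere convergence of suitable convex combinations, but the route above looks more economical.
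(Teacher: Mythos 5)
The paper gives no proof of this lemma: it is quoted from the references cited in its statement (Lieb--Loss, Theorem 7.21, and Esteban--Lions, (2.3)). Your argument is essentially the standard proof from those sources, and its core is correct: the identification of the completion \(H^1_{V,A/\varepsilon^2}(\Omega)\) with a space of \(W^{1,1}_{\mathrm{loc}}\) functions (via the Hardy inequality behind lemma~\ref{lemmaPositivity} and the local boundedness of \(A\)), the regularization \(\sqrt{\abs{u}^2+\delta^2}-\delta\), the cancellation \(\re\bigl(\Bar{u}\,\imath u A/\varepsilon^2\bigr)=0\), the Cauchy--Schwarz bound, and the passage \(\delta\to 0\) dominated by \(\abs{D_{A/\varepsilon^2}u}\in L^2\) all work as you describe (the identification of the limit of \(D\abs{u}_\delta\) with \(D\abs{u}\) is best phrased as \(L^1_{\mathrm{loc}}\), hence distributional, convergence, rather than by invoking \(D\abs{u}=0\) on \(\{u=0\}\) beforehand, but that is cosmetic).

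The one step that does not follow as written is the final conclusion ``\(\abs{u}\in H^1_V(\Omega)\)''. The space \(H^1_V(\Omega)\) is defined in the paper as the \emph{completion} of \(C^1_c(\Omega)\) for the norm \(\bigl(\int_\Omega\abs{D\varphi}^2+V\abs{\varphi}^2\bigr)^{1/2}\), so it carries a zero-boundary-condition flavour; knowing that \(D\abs{u}\in L^2(\Omega)\) and \(\abs{u}\in L^2(\Omega,V\dif x)\) gives finiteness of the norm but not, by itself, membership in that completion (on a bounded \(\Omega\) a function with finite norm that does not vanish at \(\partial\Omega\) is not in it). The gap is easily closed, and in fact by the route you sketch in your closing remark: take the defining sequence \(u_n\in C^1_c(\Omega;\C)\) for \(u\); after a harmless mollification, \(\abs{u_n}\) lies in \(H^1_V(\Omega)\), and the elementary diamagnetic inequality for \(C^1\) functions shows that \((\abs{u_n})_{n\in\N}\) is bounded there; a weak limit of a subsequence must coincide with \(\abs{u}\) because \(u_n\to u\) in \(L^2_{\mathrm{loc}}(\Omega)\), so \(\abs{u}\) does belong to the completion, while your \(\delta\)-regularization argument supplies the pointwise inequality. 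With that extra approximation step the proof is complete; without it, the last sentence asserts membership in a completion from norm-finiteness alone, which is not justified.
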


Thanks to the diamagnetic inequality, we prove a counterpart of the Hardy-type inequality of lemma~\ref{lemmaCoerciveness} in magnetic spaces.

\begin{lemma}[Smallness and compactness of the penalization potential on magnetic spaces]
\label{lemmaPositivityMagnetic}
For every \(\varepsilon \in (0, \Bar{\varepsilon}]\) and \(u \in H^1_{0, V, A/\varepsilon^2} (\Omega)\), we have \(u \vert_{\Omega \setminus \Lambda} \in L^2 (\Omega \setminus \Lambda; H (x)\dif x)\) and 
\begin{equation*}
 \int_{\Omega \setminus \Lambda} \varepsilon^2 H \abs{u}^2
 \le \int_{\R^N} \varepsilon^2 \abs{D_{A/\varepsilon^2} u}^2 + V \abs{u}^2.
\end{equation*}
Moreover the corresponding embedding \(H^1_{V, A/\varepsilon^2} (\Omega) \subset L^2 (\Omega \setminus \Lambda; H (x)\dif x)\) is compact.
\end{lemma}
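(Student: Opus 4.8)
The plan is to reduce everything to lemma~\ref{lemmaPositivity} through the diamagnetic inequality, so that lemma~\ref{lemmaPositivityMagnetic} becomes the magnetic counterpart of lemma~\ref{lemmaPositivity} in the same spirit in which lemma~\ref{lemmaDiamagnetic} relates the two Dirichlet energies.

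First I would prove the inequality. Given \(u \in H^1_{V, A/\varepsilon^2} (\Omega)\), the diamagnetic inequality (lemma~\ref{lemmaDiamagnetic}) gives \(\abs{u} \in H^1_V (\Omega)\) and \(\abs{D \abs{u}} \le \abs{D_{A/\varepsilon^2} u}\) almost everywhere, whence
\[
  \int_{\Omega} \varepsilon^2 \abs{D \abs{u}}^2 + V \abs{u}^2 \le \int_{\Omega} \varepsilon^2 \abs{D_{A/\varepsilon^2} u}^2 + V \abs{u}^2.
\]
Then I would choose a sequence \((\varphi_j)_{j \in \N}\) in \(C^1_c (\Omega; \R)\) converging to \(\abs{u}\) in \(H^1_V (\Omega)\), apply the estimate of lemma~\ref{lemmaPositivity} to each \(\varphi_j\) and let \(j \to \infty\): the right-hand side converges by definition of the \(H^1_V (\Omega)\) norm, while on the left the continuity of the embedding \(H^1_V (\Omega) \subset L^2 (\Omega, H (x)\dif x)\) from lemma~\ref{lemmaPositivity} gives \(\int_{\Omega} \varepsilon^2 H \abs{\varphi_j}^2 \to \int_{\Omega} \varepsilon^2 H \abs{u}^2\). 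Since \(H\) vanishes on \(\Lambda\), chaining the two inequalities yields the announced bound, whose finiteness shows \(u\vert_{\Omega \setminus \Lambda} \in L^2 (\Omega \setminus \Lambda; H (x)\dif x)\). I would also record the refined version in which only \(\{\abs{x} > R\}\) is integrated: since \(H (x) \abs{x}^2 \bigl(\log \frac{\abs{x}}{\rho_0}\bigr)^2 \to 0\) as \(\abs{x} \to \infty\), bounding \(H\) pointwise against \(\frac{1}{\abs{x}^2 (\log (\abs{x}/\rho_0))^2}\) on that region and invoking the Hardy inequality used in the proof of lemma~\ref{lemmaPositivity} gives \(\int_{(\Omega \setminus \Lambda) \setminus B_R} \varepsilon^2 H \abs{u}^2 \le \omega (R) \norm{u}^2\) with \(\omega (R) \to 0\).

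For the compactness I would take a bounded sequence \((u_n)_{n \in \N}\) in the Hilbert space \(H^1_{V, A/\varepsilon^2} (\Omega)\) and, along a subsequence, a weak limit \(u_n \weakto u\). Splitting \(\Omega \setminus \Lambda\) at radius \(R\), the refined estimate applied to \(u_n - u\) makes \(\int_{(\Omega \setminus \Lambda) \setminus B_R} \varepsilon^2 H \abs{u_n - u}^2 \le \omega (R) \sup_n \norm{u_n - u}^2\) arbitrarily small, uniformly in \(n\), once \(R\) is large. On the bounded set \((\Omega \setminus \Lambda) \cap B_R\), on which \(H\) is bounded, it remains to show \(u_n \to u\) in \(L^2\); this is the local Rellich--Kondrachov property for the magnetic space, which follows from the diamagnetic inequality (bounding \((\abs{u_n})\) in \(H^1_\mathrm{loc} (\Omega)\)), from the compactness of \(H^1_\mathrm{loc} (\Omega) \subset L^2_\mathrm{loc} (\Omega)\), and from the local boundedness of \(A\). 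Assembling the two regions gives \(u_n \to u\) in \(L^2 (\Omega \setminus \Lambda; H (x)\dif x)\), which is the asserted compact embedding.

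The main obstacle is precisely this local compactness: in the strong-field regime the rescaled magnetic operator has uncontrolled coefficients, so elliptic estimates cannot be applied to \(u_n\) directly, and the compactness on bounded sets must be routed through the modulus \(\abs{u_n}\) by means of lemma~\ref{lemmaDiamagnetic} together with the local boundedness of \(A\) (which holds in particular near \(\Bar{\Lambda}\), where \(A \in C^1\)). The complementary difficulty — the behaviour of the weighted integral near infinity — is on the contrary entirely governed by the Hardy-type decay already built into lemma~\ref{lemmaPositivity} and needs no control on the magnetic field; once the two are combined through the near/far splitting, the proof is complete.
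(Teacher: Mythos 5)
Your proof is correct, and while the inequality part coincides with the paper's (diamagnetic inequality plus the scalar bound of lemma~\ref{lemmaPositivity}, extended to \(\abs{u}\) by density), your compactness argument takes a different route. The paper reduces to a sequence with weak limit \(0\), gets \(u_n \to 0\) in \(L^2_{\mathrm{loc}}\) by Rellich (the local boundedness of \(A\) controlling \(D u_n = D_{A/\varepsilon^2} u_n - \imath u_n A/\varepsilon^2\) in \(L^2_{\mathrm{loc}}\)), passes to the moduli, and then quotes the compactness of the scalar embedding \(H^1_V (\Omega) \subset L^2 (\Omega, H (x) \dif x)\) from lemma~\ref{lemmaPositivity} for \(\abs{u_n} \weakto 0\); the reduction to weak limit \(0\) is what makes the passage through \(\abs{u_n}\) legitimate, since then \(\norm{u_n}_{L^2 (H \dif x)} = \norm{\abs{u_n}}_{L^2 (H \dif x)}\). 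You instead work with \(u_n - u\) directly and prove compactness by hand via a near/far splitting: a quantitative tail bound \(\int_{(\Omega \setminus \Lambda) \setminus B_R} \varepsilon^2 H \abs{v}^2 \le \omega (R) \norm{v}^2\) with \(\omega (R) \to 0\), obtained from the decay of \(H (x) \abs{x}^2 \bigl(\log \tfrac{\abs{x}}{\rho_0}\bigr)^2\) and the Hardy inequality inside the proof of lemma~\ref{lemmaPositivity}, plus strong local convergence on \((\Omega \setminus \Lambda) \cap B_R\) where \(H\) is bounded. This yields an explicit modulus of decay and avoids using the scalar compact embedding as a black box (you essentially re-derive it in the magnetic setting), at the price of a slightly longer argument; the paper's version is shorter because the tail behaviour is already packaged in lemma~\ref{lemmaPositivity}. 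One point to state more carefully in your local step: strong \(L^2_{\mathrm{loc}}\) convergence of the moduli \(\abs{u_n}\), which is all the diamagnetic inequality can give by itself, does not control \(\abs{u_n - u}\); since your decomposition uses the nonzero weak limit \(u\), you genuinely need Rellich applied to \(u_n\) itself. Your listed ingredients do deliver this --- the diamagnetic/Hardy bound gives \(u_n\) bounded in \(L^2_{\mathrm{loc}}\), the local boundedness of \(A\) then bounds \(D u_n\) in \(L^2_{\mathrm{loc}}\), and Rellich gives \(u_n \to u\) in \(L^2_{\mathrm{loc}}\), exactly as in the paper --- but the chain should be made explicit rather than attributed to compactness of the moduli.
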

\begin{proof}
The inequality follows from the corresponding statement for scalar functions lemma~\ref{lemmaPositivity} and the diamagnetic inequality (lemma~\ref{lemmaDiamagnetic}).

For the compactness of the embedding, we assume that \(u_n \weakto 0\) weakly 
in \(H^1_{V, A/\varepsilon^2} (\Omega)\) as \(n \to \infty\). 
By the previous inequality the sequence \((u_n)_{n \in \N}\) is bounded in \(L^2_\mathrm{loc} (\Omega)\), and thus, as the vector potential \(A\) is continuous, \((Du_n)_{n \in \N}\) is bounded in \(L^2_\mathrm{loc} (\Omega)\).
By the classical Rellich compactness theorem \(u_n \to 0\) strongly in \(L^2_\mathrm{loc} (\Omega)\) and thus \(\abs{u_n} \to 0\) strongly in \(L^2_\mathrm{loc} (\Omega)\) as \(n \to \infty\). 
By the diamagnetic inequality (lemma~\ref{lemmaDiamagnetic}), the sequence \((\abs{u_n})_{n \in \N}\) is bounded in \(H^1_V (\Omega)\), and thus \(\abs{u_n} \weakto 0\) weakly in \(H^1_V (\Omega)\) as \(n \to \infty\). By the compactness of the corresponding scalar embedding (lemma~\ref{lemmaPositivity}), \(\abs{u_n} \to 0\) strongly in \(L^2 (\Omega, H (x)\dif x)\)  as \(n \to \infty\) and we conclude that \(u_n \to 0\) strongly in \(L^2 (\Omega, H (x)\dif x)\) as \(n \to \infty\). 
\end{proof}

We also prove a compact Sobolev embedding with a control on the norm.

\begin{lemma}[Rescaled magnetic Sobolev inequality]
\label{lemmaRescaledSobolev}
There exists a constant \(C > 0\) such that for  every \(\varepsilon > 0\) and \(u \in H^1_{V, A/\varepsilon^2} (\Omega)\), then \(u \vert_{\Lambda} \in L^p (\Lambda)\) and 
\[
 \int_{\Lambda} \abs{u}^p \le \frac{C}{\varepsilon^{N (\frac{p}{2} - 1)}} \Bigl(\int_{\R^N} \varepsilon^2 \abs{D_{A/\varepsilon^2} u}^2 + V \abs{u}^2 \Bigr)^\frac{p}{2}.
\]
Moreover, the corresponding embedding \(H^1_{V, A/\varepsilon^2} (\Omega) \subset L^p (\Lambda)\) is compact.
\end{lemma}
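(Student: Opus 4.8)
The plan is to pass from the magnetic problem to a scalar inequality by the diamagnetic inequality, and then to deduce the estimate from the homogeneous Sobolev inequality on \(\R^N\) combined with an interpolation on \(\Lambda\); the scaling of the homogeneous Sobolev inequality is precisely what will produce the factor \(\varepsilon^{-N(p/2-1)}\). First I would invoke lemma~\ref{lemmaDiamagnetic}: if \(u \in H^1_{V, A/\varepsilon^2}(\Omega)\), then \(w \defeq \abs{u} \in H^1_V (\Omega)\) with \(\abs{Dw} \le \abs{D_{A/\varepsilon^2} u}\) almost everywhere, so that replacing \(u\) by \(w\) leaves \(\int_\Lambda \abs{u}^p\) unchanged and does not increase \(\int_\Omega \varepsilon^2 \abs{D_{A/\varepsilon^2} u}^2 + V \abs{u}^2\); it therefore suffices to treat a nonnegative \(w \in H^1_V (\Omega)\). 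Since \(H^1_V (\Omega)\) is by definition the completion of \(C^1_c (\Omega)\), the extension of \(w\) by \(0\) to \(\R^N\) is a limit, in the seminorm \(\norm{D \cdot}_{L^2}\), of functions in \(C^1_c (\R^N)\); for \(N \ge 3\) the classical Sobolev inequality on \(\R^N\) then gives, with \(2^* = \frac{2N}{N-2}\),
\[
  \norm{w}_{L^{2^*} (\R^N)} \le C_N \norm{Dw}_{L^2 (\Omega)} \le C_N \norm{D_{A/\varepsilon^2} u}_{L^2 (\Omega)}.
\]

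Next I would interpolate on \(\Lambda\). Because \(\inf_\Lambda V > 0\), one has \(\int_\Lambda \abs{u}^2 \le (\inf_\Lambda V)^{-1} \int_\Omega \varepsilon^2 \abs{D_{A/\varepsilon^2} u}^2 + V \abs{u}^2\). The hypothesis \(\frac12 - \frac1N < \frac1p < \frac12\) says exactly that \(2 < p < 2^*\), so there is \(\theta \defeq N (\frac12 - \frac1p) \in (0, 1)\) with \(\frac1p = \frac{1 - \theta}{2} + \frac{\theta}{2^*}\), and Hölder's inequality on \(\Lambda\) gives
\[
  \int_\Lambda \abs{u}^p \le \Bigl( \int_\Lambda \abs{u}^2 \Bigr)^{\frac{(1 - \theta) p}{2}} \Bigl( \int_\Lambda \abs{u}^{2^*} \Bigr)^{\frac{\theta p}{2^*}} \le \Bigl( \int_\Lambda \abs{u}^2 \Bigr)^{\frac{(1 - \theta) p}{2}} \norm{w}_{L^{2^*} (\R^N)}^{\theta p}.
\]
Plugging in the two previous bounds, using \(\norm{D_{A/\varepsilon^2} u}_{L^2 (\Omega)}^2 \le \varepsilon^{-2} \int_\Omega \varepsilon^2 \abs{D_{A/\varepsilon^2} u}^2 + V \abs{u}^2\), and observing that \(\theta p = N (\frac p2 - 1)\), one obtains the announced inequality with a constant independent of \(\varepsilon\). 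Note that the only negative powers of \(\varepsilon\) arise from the elementary bound \(\norm{D_{A/\varepsilon^2} u}_{L^2}^2 \le \varepsilon^{-2} \norm{u}^2\), so the estimate indeed holds for every \(\varepsilon > 0\).

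For the compact embedding I would fix \(\varepsilon > 0\) and take \(u_n \weakto 0\) in \(H^1_{V, A/\varepsilon^2} (\Omega)\). By the Sobolev step the sequence is bounded in \(L^{2^*} (\Lambda)\), hence equi-integrable in \(L^2 (\Lambda)\). On each ball \(B\) with \(\overline{B} \subset \Lambda\) the sequence \((u_n)\) is bounded in \(H^1 (B)\) — write \(D u_n = D_{A/\varepsilon^2} u_n - \imath u_n A/\varepsilon^2\) and use that \(A\) is bounded on \(\overline{\Lambda}\) and \(V\) is bounded below on \(\Lambda\) — and, since \(H^1_{V, A/\varepsilon^2} (\Omega)\) embeds continuously into \(L^2 (B)\), the Rellich theorem together with uniqueness of the weak limit gives \(u_n \to 0\) in \(L^2 (B)\); exhausting \(\Lambda\) by such balls, \(u_n \to 0\) in measure on \(\Lambda\). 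By the Vitali convergence theorem \(u_n \to 0\) in \(L^2 (\Lambda)\), and interpolating once more with the \(L^{2^*} (\Lambda)\) bound yields \(u_n \to 0\) in \(L^p (\Lambda)\).

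The step I expect to be the real obstacle is the borderline case \(N = 2\), where \(2^* = \infty\) and the homogeneous Sobolev inequality is unavailable: a function in the homogeneous space \(\dot H^1 (\R^2)\) need not lie in any \(L^q_{\mathrm{loc}}\), so the Dirichlet seminorm alone cannot control \(\norm{w}_{L^q (\Lambda)}\). Here one must genuinely use the extra two-dimensional assumption of theorem~\ref{theoremLocal}: if \(\Omega\) is bounded, the Poincaré and Sobolev inequalities give \(\norm{w}_{L^q (\Omega)} \le C_q \norm{D w}_{L^2 (\Omega)}\) for every finite \(q\); if instead \(\liminf_{\abs{x} \to \infty} V (x) \abs{x}^2 > 0\), the logarithmic Hardy inequality underlying lemma~\ref{lemmaPositivity} lets one absorb the \(L^2\)-tails of \(w\) into \(\int_\Omega V \abs{w}^2\). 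In either case one repeats the interpolation above with \(2^*\) replaced by a fixed large finite exponent \(q > p\), with the same conclusion. For \(N \ge 3\) the only delicate point is the exponent bookkeeping, and it is the requirement \(\theta \in (0,1)\) that dictates the range \(\frac12 - \frac1N < \frac1p < \frac12\); working with the Sobolev inequality on the whole space rather than on the possibly irregular set \(\Lambda\) also neatly avoids any regularity assumption on \(\partial \Lambda\).
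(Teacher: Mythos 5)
For \(N \ge 3\) your argument is correct and is a mild variant of the paper's: you pass to \(\abs{u}\) by the diamagnetic inequality, use the homogeneous Sobolev inequality \(\norm{\abs{u}}_{L^{2^*}(\R^N)} \le C \norm{D\abs{u}}_{L^2}\) and then H\"older-interpolate on \(\Lambda\) between \(L^2\) and \(L^{2^*}\), the \(L^2(\Lambda)\) term being absorbed through \(\inf_\Lambda V > 0\); the exponent bookkeeping \(\theta p = N(\tfrac p2 - 1)\) is right and yields exactly the stated power of \(\varepsilon\). The paper instead takes a cut-off \(\psi \in C^1(\Omega)\) with \(\psi = 1\) on \(\Lambda\), \(\inf_{\supp\psi} V > 0\) and \(D\psi\) bounded, and applies the rescaled \emph{inhomogeneous} Sobolev (Gagliardo--Nirenberg) inequality
\[
  \int_{\R^N} \abs{v}^p \le \frac{C}{\varepsilon^{N(\frac p2 -1)}}\Bigl(\int_{\R^N} \varepsilon^2 \abs{Dv}^2 + \abs{v}^2\Bigr)^{\frac p2}
\]
to \(v = \abs{\psi u}\); the point of the cut-off is that the \(L^2\)-term of \(\psi u\) is controlled by \(\int V\abs{u}^2\) because \(V\) is bounded below on \(\supp\psi\), and this works uniformly in all dimensions \(N \ge 2\). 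Your compactness argument is essentially the paper's.

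The genuine gap is your treatment of \(N = 2\), on two counts. First, the lemma does not need the hypotheses ``\(\Omega\) bounded or \(\liminf_{\abs{x}\to\infty} V(x)\abs{x}^2 > 0\)'': the paper's proof uses only that \(\Lambda\) is bounded and \(V\) is continuous and bounded away from \(0\) near \(\overline{\Lambda}\), so importing the global assumptions of theorem~\ref{theoremLocal} here is both unnecessary and changes the statement being proved. Second, and more seriously, your proposed patch does not give the stated inequality: interpolating between \(L^2(\Lambda)\) and a \emph{fixed finite} exponent \(q > p\), with \(\norm{w}_{L^q} \lesssim \norm{Dw}_{L^2} \le \varepsilon^{-1}\bigl(\int \varepsilon^2\abs{D_{A/\varepsilon^2}u}^2 + V\abs{u}^2\bigr)^{1/2}\), produces the power \(\varepsilon^{-\theta p}\) with \(\theta p = \bigl(\tfrac p2 - 1\bigr)\big/\bigl(\tfrac12 - \tfrac1q\bigr) > p - 2 = N\bigl(\tfrac p2 - 1\bigr)\), i.e.\ a strictly worse power of \(\varepsilon\) than the one claimed in the lemma; the limit case \(q = \infty\) that would restore the exponent is exactly the unavailable endpoint. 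The correct repair (valid for every \(N \ge 2\), and what the paper does) is to replace the homogeneous endpoint by the product-form Gagliardo--Nirenberg inequality \(\norm{v}_{L^p}^p \le C \norm{Dv}_{L^2}^{N(\frac p2 -1)} \norm{v}_{L^2}^{\,p - N(\frac p2 -1)}\) applied to \(\abs{\psi u}\) with the cut-off above, which distributes the \(\varepsilon\)-weights so as to give precisely \(\varepsilon^{-N(\frac p2 - 1)}\).
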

\begin{proof}
Since \(\Lambda\) is bounded, \(\inf V > 0\) and \(V\) is uniformly continuous on \(\Lambda\), there exists a function \(\psi \in C^1 (\Omega)\) such that \(\psi \ge 0\) in \(\Omega\), \(\psi = 1\) on \(\Lambda\), \(\inf_{\supp \psi} V > 0\) and \(D \psi \in L^\infty (\Omega)\). We have then \(\abs{\psi u} \in H^1 (\R^N)\) and by the classical Sobolev embedding and the diamagnetic inequality of lemma~\ref{lemmaDiamagnetic}
\[
\begin{split}
\int_{\Lambda} \abs{u}^p \le \int_{\Omega} \abs{\psi u}^p&\le  \frac{C}{\varepsilon^{N(\frac{p}{2}- 1})}\Big(\int_{\R^N} \varepsilon^2\bigabs{D \abs{\psi u}}^2 + \abs{\psi u}^2 \Bigr)^\frac{p}{2}\\
& \le \frac{C'}{\varepsilon^{(\frac{p}{2}-1)N}}  \Bigl(\int_{\Omega} \varepsilon^2 \bigabs{D \abs{u}}^2 + V \abs{u}^2 \Bigr)^\frac{p}{2}\\
& \le \frac{C'}{\varepsilon^{(\frac{p}{2}-1) N}}  \Bigl(\int_{\Omega} \varepsilon^2 \abs{D_{A/\varepsilon^2} u}^2 + V \abs{u}^2 \Bigr)^\frac{p}{2}.
\end{split}
\]
The compactness is proved as in the proof of lemma~\ref{lemmaPositivityMagnetic}.
\end{proof}

\begin{proof}[Proof of lemma~\ref{lemmaFunctional}]
This follows from the properties $(g_2)$ and $(g_3)$, and the estimates in magnetic spaces  (lemmas~\ref{lemmaPositivityMagnetic} and \ref{lemmaRescaledSobolev}).
\end{proof}

Another propery of the functional that we need is its coerciveness: the norm of \(u\) in the space  \(H^1_{V, A/\varepsilon^2} (\Omega)\) is controlled by the functional \(\mathcal{G}_\varepsilon (u)\) and its radial derivative \(\dualprod{\mathcal{G}_\varepsilon' (u)}{u}\).

\begin{lemma}[Coerciveness of the functional]
\label{lemmaCoerciveness}
For every \(\varepsilon \in (0, \Bar{\varepsilon}]\) and each \(u \in H^1_{V, A/\varepsilon^2} (\Omega)\),
\[
  \Bigl(\frac{1}{2}- \frac{1}{p}\Bigr)(1 - \mu) \int_{\Omega} \varepsilon^2 \abs{D_{A/\varepsilon^2} u}^2 + V \abs{u}^2 \le \mathcal{G}_\varepsilon (u) - \frac{1}{p}\dualprod{\mathcal{G}_\varepsilon' (u)}{u}.
\]
\end{lemma}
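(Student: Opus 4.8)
The plan is to compute the combination $\mathcal{G}_\varepsilon(u) - \frac1p \dualprod{\mathcal{G}_\varepsilon'(u)}{u}$ directly from the definitions and then discard the nonlinear terms using the pointwise inequalities $(g_4)$ and $(g_5)$ together with the magnetic Hardy-type estimate of Lemma~\ref{lemmaPositivityMagnetic}. Writing $N(u) \defeq \int_\Omega \varepsilon^2 \abs{D_{A/\varepsilon^2} u}^2 + V\abs{u}^2$ for the square of the norm, one has
\[
  \mathcal{G}_\varepsilon(u) - \frac1p \dualprod{\mathcal{G}_\varepsilon'(u)}{u}
  = \Bigl(\frac12 - \frac1p\Bigr) N(u)
  - \int_\Omega \mathsuper{G}_\varepsilon(u) + \frac1p \int_\Omega \scalprod{u}{\mathsuper{g}_\varepsilon(u)}.
\]
So the lemma reduces to showing that the remaining integral term is bounded below by $-\bigl(\frac12-\frac1p\bigr)\mu\, N(u)$, i.e.
\[
  \int_\Omega \Bigl(\frac1p \scalprod{u}{\mathsuper{g}_\varepsilon(u)} - \mathsuper{G}_\varepsilon(u)\Bigr)
  \ge -\Bigl(\frac12 - \frac1p\Bigr)\mu\, N(u).
\]

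First I would split the domain of integration into $\Lambda$ and $\Omega\setminus\Lambda$. On $\Lambda$, property $(g_5)$ gives $pG_\varepsilon(x,u) \le \scalprod{u}{g_\varepsilon(x,u)}$ pointwise, so the integrand $\frac1p\scalprod{u}{\mathsuper{g}_\varepsilon(u)} - \mathsuper{G}_\varepsilon(u)$ is nonnegative there and may simply be dropped. On $\Omega\setminus\Lambda$ I use instead $(g_4)$ in the form $\mathsuper{G}_\varepsilon(u) \le \frac12\scalprod{u}{\mathsuper{g}_\varepsilon(u)}$, so on that set
\[
  \frac1p \scalprod{u}{\mathsuper{g}_\varepsilon(u)} - \mathsuper{G}_\varepsilon(u)
  \ge \Bigl(\frac1p - \frac12\Bigr)\scalprod{u}{\mathsuper{g}_\varepsilon(u)}
  = -\Bigl(\frac12 - \frac1p\Bigr)\scalprod{u}{\mathsuper{g}_\varepsilon(u)}.
\]
Since $\frac12 - \frac1p > 0$, it then suffices to bound $\int_{\Omega\setminus\Lambda} \scalprod{u}{\mathsuper{g}_\varepsilon(u)}$ from above. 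Here the definition of $g_\varepsilon$ on $\Omega\setminus\Lambda$ gives $\scalprod{u}{g_\varepsilon(x,u)} = \min(\varepsilon^2\mu H(x),\abs{u}^{p-2})\abs{u}^2 \le \varepsilon^2 \mu H(x)\abs{u}^2$ pointwise, so $\int_{\Omega\setminus\Lambda}\scalprod{u}{\mathsuper{g}_\varepsilon(u)} \le \mu \int_{\Omega\setminus\Lambda}\varepsilon^2 H\abs{u}^2$.

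The final step is to invoke Lemma~\ref{lemmaPositivityMagnetic}, valid for $\varepsilon\in(0,\Bar\varepsilon]$, which yields $\int_{\Omega\setminus\Lambda}\varepsilon^2 H\abs{u}^2 \le N(u)$. Combining the three contributions gives
\[
  \int_\Omega \Bigl(\frac1p\scalprod{u}{\mathsuper{g}_\varepsilon(u)} - \mathsuper{G}_\varepsilon(u)\Bigr)
  \ge -\Bigl(\frac12 - \frac1p\Bigr)\mu\, N(u),
\]
which is exactly the inequality needed, and rearranging proves the lemma. The argument is essentially bookkeeping; the one point requiring care is making sure all the pointwise bounds are applied on the correct subset ($(g_5)$ only holds on $\Lambda$, the $H$-bound only on $\Omega\setminus\Lambda$) and that the sign of $\frac12-\frac1p$ is tracked correctly so that the direction of each inequality is preserved. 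I would also note in passing that everything is finite: the left-hand side of the lemma statement is finite because $\mathcal{G}_\varepsilon$ is well defined and $C^1$ by Lemma~\ref{lemmaFunctional}, so no integrability issue arises.
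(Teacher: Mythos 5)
Your argument is correct and is essentially the paper's own proof: the same identity for \(\mathcal{G}_\varepsilon(u) - \tfrac1p\dualprod{\mathcal{G}_\varepsilon'(u)}{u}\), then \((g_5)\) on \(\Lambda\), \((g_4)\) together with the pointwise bound \(\scalprod{u}{\mathsuper{g}_\varepsilon(u)} \le \mu\varepsilon^2 H\abs{u}^2\) on \(\Omega\setminus\Lambda\) (which is property \((g_3)\), whose stated domain in the paper is a typo), and finally Lemma~\ref{lemmaPositivityMagnetic}. The only difference is cosmetic: you bound the nonlinear term from below where the paper bounds its negative from above.
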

\begin{proof}
We compute by definition of the functional \(\mathcal{G}_\varepsilon\),
\[
   \Bigl(\frac{1}{2}- \frac{1}{p}\Bigr) \Bigl(\int_{\R^N} \varepsilon^2 \abs{D_{A/\varepsilon^2} u}^2 + V \abs{u}^2\Bigr) -  \mathcal{G}_\varepsilon (u) + \frac{1}{p}\dualprod{\mathcal{G}_\varepsilon' (u)}{u}
   = \int_{\R^N} G_\varepsilon (u) - \frac{\scalprod{g_\varepsilon (u)}{u}}{p}. 
\]
In view of first the properties $(g_5)$ and $(g_4)$, and then the property $(g_3)$, we have
\[
  \int_{\R^N} G_\varepsilon (u) - \frac{\scalprod{g_\varepsilon (u)}{u}}{p} \le \Bigl(\frac{1}{2} - \frac{1}{p}\Bigr) \int_{\R^N \setminus \Lambda} \scalprod{g_\varepsilon (u)}{u}\le \Bigl(\frac{1}{2} - \frac{1}{p}\Bigr) \mu \int_{\R^N \setminus \Lambda} \varepsilon^2 H \abs{u}^2.
\]
and the conclusion comes from the smallness property of the penalization potential (lemma~\ref{lemmaPositivityMagnetic}).
\end{proof}

We now show the existence of a suitable critical point of the functional \(\mathcal{G}_\varepsilon\).

\begin{proposition}[Existence of solutions to the penalized problem]
\label{propositionExistencePenalized}
For every \(\varepsilon>0\), there exists a solution \(u_\varepsilon \in H^1_{0, V, A/\varepsilon^2} (\Omega) (\R^N; \C)\) of the penalized problem \eqref{problemPenalized} such that 
\[
 \mathcal{G}_\varepsilon (u) = c_\varepsilon\defeq \inf_{\gamma\in\Gamma_\varepsilon}\max_{t\in[0, 1]} \mathcal{G}_\varepsilon\big(\gamma(t)\big),
\]
where 
\[
\Gamma_\varepsilon\defeq \big\{\gamma\in C\big([0, 1]; H^1_{0, V, A/\varepsilon^2} (\Omega) \big)\mid \gamma(0)=0\text{ and } \mathcal{G}_\varepsilon \big(\gamma(1)\big)<0\big\}. 
\]
\end{proposition}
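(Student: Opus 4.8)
The plan is to apply the mountain-pass theorem of Ambrosetti--Rabinowitz to the functional $\mathcal{G}_\varepsilon$ on the Hilbert space $H^1_{V, A/\varepsilon^2} (\Omega)$. Since lemma~\ref{lemmaFunctional} already provides that $\mathcal{G}_\varepsilon \in C^1$, and its critical points solve \eqref{problemPenalized}, it suffices to verify: (i) the mountain-pass geometry, and (ii) the Palais--Smale condition at the level $c_\varepsilon$ (so that $\Gamma_\varepsilon$ is nonempty and the min-max value is attained). Once both are in place, the mountain-pass theorem yields a critical point at level $c_\varepsilon$, which is the desired $u_\varepsilon$.

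For the geometry, I would first note $\mathcal{G}_\varepsilon (0) = 0$. For the local minimum at the origin: by $(g_2)$ and $(g_3)$, $\int_\Omega \mathsuper{G}_\varepsilon(u) \le \int_{\Lambda} \frac{1}{p}\abs{u}^p + \frac{\mu\varepsilon^2}{2}\int_{\Omega\setminus\Lambda} H\abs{u}^2$; the last term is absorbed by a fraction of the norm via lemma~\ref{lemmaPositivityMagnetic}, while the first is estimated by $C\varepsilon^{-N(p/2-1)}\norm{u}^p$ using lemma~\ref{lemmaRescaledSobolev}. Hence $\mathcal{G}_\varepsilon(u) \ge \tfrac{1}{2}(1-\tfrac{\mu}{2})\norm{u}^2 - C'\norm{u}^p$, which is strictly positive on a small sphere $\norm{u} = r_\varepsilon$ since $p > 2$. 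For the mountain over the ridge: fix any $v \in C^1_c(\Lambda; \C)\setminus\{0\}$; on $\Lambda$ one has $G_\varepsilon(x, tv) = t^p\abs{v}^p/p$, so $\mathcal{G}_\varepsilon(tv) = \tfrac{t^2}{2}\norm{v}^2 - \tfrac{t^p}{p}\int_\Lambda \abs{v}^p \to -\infty$ as $t \to \infty$; choosing $T$ large, $\gamma(t) = tTv$ lies in $\Gamma_\varepsilon$, so $\Gamma_\varepsilon \ne \emptyset$ and $c_\varepsilon \ge \tfrac{1}{2}(1-\tfrac{\mu}{2})r_\varepsilon^2 > 0$.

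The Palais--Smale condition is where the real work lies, and I expect it to be \textbf{the main obstacle}. Let $(u_n)$ satisfy $\mathcal{G}_\varepsilon(u_n) \to c_\varepsilon$ and $\mathcal{G}_\varepsilon'(u_n) \to 0$. Coerciveness (lemma~\ref{lemmaCoerciveness}), applied with $0 < \mu < 1$, bounds $\norm{u_n}$, so up to a subsequence $u_n \weakto u$ in $H^1_{V, A/\varepsilon^2}(\Omega)$. The compact embeddings into $L^p(\Lambda)$ (lemma~\ref{lemmaRescaledSobolev}) and into $L^2(\Omega\setminus\Lambda; H\dif x)$ (lemma~\ref{lemmaPositivityMagnetic}) give $u_n \to u$ in those spaces. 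The point is then that $\dualprod{\mathcal{G}_\varepsilon'(u_n) - \mathcal{G}_\varepsilon'(u)}{u_n - u} = \norm{u_n - u}^2 - \int_\Omega \scalprod{\mathsuper{g}_\varepsilon(u_n) - \mathsuper{g}_\varepsilon(u)}{u_n - u}$; the left side tends to $0$ (weak convergence plus $\mathcal{G}_\varepsilon'(u_n)\to 0$), and the nonlinear term also tends to $0$ because $g_\varepsilon$ splits into a part controlled by $\abs{s}^{p-1}$ on $\Lambda$ (handled by $L^p(\Lambda)$ convergence and continuity of the superposition operator $L^p \to L^{p'}$) and a part controlled by $\mu\varepsilon^2 H(x)\abs{s}$ on $\Omega\setminus\Lambda$ (handled by $L^2(H\dif x)$ convergence). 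Hence $\norm{u_n - u} \to 0$, which is the Palais--Smale condition. With geometry and (PS)$_{c_\varepsilon}$ established, the mountain-pass theorem delivers $u_\varepsilon$ with $\mathcal{G}_\varepsilon(u_\varepsilon) = c_\varepsilon$ and $\mathcal{G}_\varepsilon'(u_\varepsilon) = 0$; one should also record that $u_\varepsilon \ne 0$, which is immediate from $c_\varepsilon > 0 = \mathcal{G}_\varepsilon(0)$.
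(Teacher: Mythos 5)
Your proposal is correct and follows essentially the same route as the paper: the mountain-pass theorem applied to \(\mathcal{G}_\varepsilon\), with the geometry (\(0 < c_\varepsilon < \infty\)) obtained from \((g_2)\)--\((g_3)\) together with lemmas~\ref{lemmaPositivityMagnetic} and \ref{lemmaRescaledSobolev} and a test function supported in \(\Lambda\), and the Palais--Smale condition obtained from the coerciveness lemma plus the two compact embeddings, splitting the nonlinear term between \(\Lambda\) and \(\Omega \setminus \Lambda\) exactly as the paper does. The only quibbles are cosmetic: the absorbed constant should read \(\tfrac{1-\mu}{2}\) rather than \(\tfrac12\bigl(1-\tfrac{\mu}{2}\bigr)\), which changes nothing since \(\mu \in (0,1)\).
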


It is known in critical point theory, that it suffices to prove that the minimax level is nondegenerate \(c_\varepsilon \in (0, \infty)\) and that the functional \(\mathcal{G}_\varepsilon\) satisfies the Palais--Smale condition \citelist{\cite{AmbrosettiRabinowitz1973}*{theorem 2.1}\cite{Rabinowitz1986}*{theorem 2.2}\cite{Struwe2008}*{theorem 6.1}\cite{Willem1996}*{theorem 2.10}}.

\begin{lemma}[Nondegeneracy of the critical level]
For every \(\varepsilon \in (0, \Bar{\varepsilon}]\), 
\[
 0 < c_\varepsilon < \infty.
\]
\end{lemma}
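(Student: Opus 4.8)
The plan is to establish the two inequalities separately, using the standard mountain-pass geometry. For the upper bound $c_\varepsilon < \infty$, it suffices to exhibit a single path $\gamma \in \Gamma_\varepsilon$ along which $\mathcal{G}_\varepsilon$ stays finite; since $\mathcal{G}_\varepsilon$ is continuous on $H^1_{V,A/\varepsilon^2}(\Omega)$ by lemma~\ref{lemmaFunctional}, the maximum over the compact interval $[0,1]$ of $\mathcal{G}_\varepsilon \circ \gamma$ is automatically finite, and then $c_\varepsilon \le \max_{t\in[0,1]}\mathcal{G}_\varepsilon(\gamma(t)) < \infty$. To produce such a path, I would fix any $w \in C^1_c(\Lambda;\C)$ with $w \not\equiv 0$ and consider the ray $t \mapsto t T w$ for $t \in [0,1]$, with $T > 0$ a large constant to be chosen. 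Since $\supp w \subset \Lambda$, on this support $G_\varepsilon(x, tTw) = \frac{1}{p}|tTw|^p$ by definition of $G_\varepsilon$, so
\[
  \mathcal{G}_\varepsilon(tTw) = \frac{(tT)^2}{2}\int_\Omega \varepsilon^2|D_{A/\varepsilon^2}w|^2 + V|w|^2 - \frac{(tT)^p}{p}\int_\Lambda |w|^p.
\]
As $p > 2$, for $T$ large enough $\mathcal{G}_\varepsilon(Tw) < 0$, so $\gamma(t) = tTw$ lies in $\Gamma_\varepsilon$, giving $c_\varepsilon < \infty$.

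For the lower bound $c_\varepsilon > 0$, the strategy is to show that $\mathcal{G}_\varepsilon$ has a uniform positive barrier on a small sphere in the energy norm, which every path from $0$ to a point of negative energy must cross. Writing $\|u\|^2 = \int_\Omega \varepsilon^2|D_{A/\varepsilon^2}u|^2 + V|u|^2$, I would estimate from below: by $(g_2)$, $\int_\Omega \mathsuper{G}_\varepsilon(u) \le \frac{1}{p}\int_{\{u\neq 0\}\cap\Lambda}|u|^p + \int_{\Omega\setminus\Lambda}\mathsuper{G}_\varepsilon(u)$; the first term is controlled by the rescaled magnetic Sobolev inequality (lemma~\ref{lemmaRescaledSobolev}), which gives $\int_\Lambda |u|^p \le \frac{C}{\varepsilon^{N(p/2-1)}}\|u\|^p$, and the outer term is controlled using $(g_4)$ together with $(g_3)$ — actually more directly, $2G_\varepsilon(x,s) \le |s| g_\varepsilon(x,s) \le \mu\varepsilon^2 H(x)|s|^2$ fails outside $\Lambda$ since $(g_3)$ is stated on $\Lambda$; instead I would use $(g_2)$ and the structure of $G_\varepsilon$ on $\Omega\setminus\Lambda$, where $G_\varepsilon(x,s) \le \frac{\mu\varepsilon^2 H(x)}{2}|s|^2$ directly from $\min(\mu\varepsilon^2 H(x)t, t^{p-1}) \le \mu\varepsilon^2 H(x)t$, and then bound $\int_{\Omega\setminus\Lambda}\varepsilon^2 H|u|^2$ by $\mu\|u\|^2$ via lemma~\ref{lemmaPositivityMagnetic}. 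Combining,
\[
  \mathcal{G}_\varepsilon(u) \ge \frac{1}{2}\|u\|^2 - \frac{\mu}{2}\|u\|^2 - \frac{C}{p\,\varepsilon^{N(p/2-1)}}\|u\|^p = \frac{1-\mu}{2}\|u\|^2 - C'\|u\|^p,
\]
and since $p > 2$ and $\mu < 1$, the right-hand side is positive on a sphere $\|u\| = r_\varepsilon$ for suitable small $r_\varepsilon > 0$, with value bounded below by some $\delta_\varepsilon > 0$.

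Finally, I would conclude by a connectedness argument: any $\gamma \in \Gamma_\varepsilon$ satisfies $\gamma(0) = 0$ with $\|\gamma(0)\| = 0 < r_\varepsilon$, while $\mathcal{G}_\varepsilon(\gamma(1)) < 0$ forces $\|\gamma(1)\| > r_\varepsilon$ (since on the ball $\|u\| \le r_\varepsilon$ we have just shown $\mathcal{G}_\varepsilon(u) \ge 0$, in fact $\mathcal{G}_\varepsilon(u) > 0$ for $0 < \|u\| \le r_\varepsilon$ after possibly shrinking $r_\varepsilon$). By continuity of $t \mapsto \|\gamma(t)\|$ and the intermediate value theorem there is $t_0 \in (0,1)$ with $\|\gamma(t_0)\| = r_\varepsilon$, whence $\max_{t\in[0,1]}\mathcal{G}_\varepsilon(\gamma(t)) \ge \mathcal{G}_\varepsilon(\gamma(t_0)) \ge \delta_\varepsilon$. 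Taking the infimum over $\gamma \in \Gamma_\varepsilon$ yields $c_\varepsilon \ge \delta_\varepsilon > 0$. The main obstacle is bookkeeping the outer penalized term $\int_{\Omega\setminus\Lambda}\mathsuper{G}_\varepsilon(u)$ correctly so that the Hardy-type smallness from lemma~\ref{lemmaPositivityMagnetic} absorbs it into the quadratic part with the factor $1-\mu > 0$ to spare; once that is clean, the rest is the classical mountain-pass geometry.
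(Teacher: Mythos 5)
Your proposal is correct and follows essentially the same route as the paper: finiteness of \(c_\varepsilon\) by letting \(\mathcal{G}_\varepsilon\) go to \(-\infty\) along a ray through a nonzero function supported in \(\Lambda\) (where \(G_\varepsilon(x,s)=\abs{s}^p/p\)), and positivity via the lower bound \(\mathcal{G}_\varepsilon(u)\ge \frac{1-\mu}{2}\norm{u}^2 - C\varepsilon^{-N(\frac{p}{2}-1)}\norm{u}^p\) obtained from \((g_2)\), the penalization-smallness estimate of lemma~\ref{lemmaPositivityMagnetic} and the rescaled magnetic Sobolev inequality of lemma~\ref{lemmaRescaledSobolev}, which gives the mountain-pass geometry at \(0\). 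Your only additions are to spell out the intermediate-value crossing argument that the paper leaves implicit and to read the bound on \(G_\varepsilon\) outside \(\Lambda\) directly off its definition (circumventing the misstated domain in \((g_3)\)), both of which are fine.
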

\begin{proof}
In order to prove that \(c_\varepsilon < \infty\), since the set \(\Lambda\) is open and not empty, we take \(u \in H^1_{0, V, A/\varepsilon^2} (\Omega) \setminus \{0\}\) such that \(u = 0\) on \(\Omega \setminus \Lambda\). By the property \((g_5)\), we have for every \((x, s) \in \Lambda \times \C\) and \(t \ge 1\), 
\(
  G (x, ts) \ge t^p G (x, s),
\)
and thus by $(g_6)$, for every \((x, s) \in \Lambda \times \C \setminus \{0\}\),  \(\limsup_{t \to \infty} t^{-p} G (x, t s) > 0\). Since \(p > 2\), this implies that \(\lim_{t \to \infty} \mathcal{G}_\varepsilon (t u) = - \infty\). In particular \(\Gamma_\varepsilon \ne \emptyset\) and \(c_\varepsilon < \infty\).

For the other inequality, we observe that for every \(u \in H^1_{V, A/\varepsilon^2} (\Omega)\), by the properties $(g_2)$ and $(g_3)$,
\[
  \mathcal{G}_\varepsilon (u) 
  \ge 
  \frac{1}{2} \int_{\Omega} \varepsilon^2 \abs{D_{A/\varepsilon^2} u}^2 + V \abs{u}^2  
  - \frac{1}{p} \int_{\Lambda} \abs{u}^{p} - \frac{\mu}{2} \int_{\Omega \setminus \Lambda} H \abs{u}^2.
\]
In view of the Hardy and Sobolev inequalities in magnetic Sobolev spaces (lemmas~\ref{lemmaPositivityMagnetic} and \ref{lemmaRescaledSobolev}), we have 
\[
  \mathcal{G}_\varepsilon (u) 
  \ge \frac{1 - \mu}{2} \int_{\Omega} \varepsilon^2 \abs{D_{A/\varepsilon^2} u}^2 + V \abs{u}^2 
  - \frac{C}{\varepsilon^{(p - 1)N}}\Bigl(\int_{\Omega} \varepsilon^2 \abs{D_{A/\varepsilon^2} u}^2 + V \abs{u}^2 \Bigr)^\frac{p}{2}.
\]
It follows since \(p > 2\) that \(0\) is a strict local minimizer of \(\mathcal{G}_\varepsilon\) (with respect to the strong topology of \(H^1_{V, A/\varepsilon^2} (\Omega)\)) and thus  we conclude that \(c_\varepsilon > 0\).
\end{proof}

Next, we prove that the functional \(\mathcal{G}_\varepsilon\) satisfies the Palais--Smale condition.

\begin{lemma}[Palais--Smale condition]
\label{lemmaPalaisSmale}
Let \((u_n)_{n \in \N}\) be a sequence in \(H^1_{0, V, A/\varepsilon^2} (\Omega)\) such that 
\begin{align*}
\mathcal{G}_\varepsilon(u_n)&\to c,&
&\text{ and }&
\mathcal{G}_\varepsilon'(u_n)\to 0 \text{ in \(\bigl(H^1_{0, V, A/\varepsilon^2} (\Omega)\bigr)^*\)}, 
\end{align*}
then, up to a subsequence, the sequence \((u_n)_{n \in \N}\) converges strongly in \(H^1_{V, A/\varepsilon^2} (\Omega)\).
\end{lemma}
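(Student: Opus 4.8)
The plan is to follow the standard recipe for verifying the Palais--Smale condition for a variational problem whose functional splits into a quadratic part and a subcritical nonlinearity: (1) use coerciveness to get boundedness of the sequence, (2) extract a weak limit, (3) upgrade weak convergence to strong convergence using the compactness of the embeddings available to us. The non-standard features here are that the ambient space $H^1_{V,A/\varepsilon^2}(\Omega)$ depends on $\varepsilon$, that $\Omega$ may be unbounded, and that we have at our disposal exactly two compactness statements: the compact embedding $H^1_{V,A/\varepsilon^2}(\Omega) \subset L^2(\Omega \setminus \Lambda; H(x)\dif x)$ of lemma~\ref{lemmaPositivityMagnetic} and the compact embedding $H^1_{V,A/\varepsilon^2}(\Omega) \subset L^p(\Lambda)$ of lemma~\ref{lemmaRescaledSobolev}. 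The penalized nonlinearity $g_\varepsilon$ was built precisely so that its behaviour is controlled by the $L^p(\Lambda)$ norm on $\Lambda$ (via $(g_2)$) and by $\varepsilon^2 \mu H \abs{u}$ on $\Omega \setminus \Lambda$ (via $(g_3)$, or rather the $\Omega\setminus\Lambda$ analogue encoded in the definition of $g_\varepsilon$), so these two compact embeddings are exactly what is needed.

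First I would fix $\varepsilon \in (0,\bar\varepsilon]$ and let $(u_n)_{n\in\N}$ be a Palais--Smale sequence at level $c$. Writing $\mathcal{G}_\varepsilon(u_n) - \frac{1}{p}\dualprod{\mathcal{G}_\varepsilon'(u_n)}{u_n} = \mathcal{G}_\varepsilon(u_n) + o(\norm{u_n})$ and invoking the coerciveness estimate of lemma~\ref{lemmaCoerciveness}, one gets
\[
  \Bigl(\tfrac12 - \tfrac1p\Bigr)(1-\mu)\norm{u_n}^2 \le c + 1 + o(\norm{u_n}),
\]
for $n$ large, hence $(u_n)_{n\in\N}$ is bounded in $H^1_{V,A/\varepsilon^2}(\Omega)$. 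Up to a subsequence, $u_n \weakto u$ weakly in $H^1_{V,A/\varepsilon^2}(\Omega)$; by lemma~\ref{lemmaPositivityMagnetic}, $u_n \to u$ strongly in $L^2(\Omega\setminus\Lambda; H(x)\dif x)$, and by lemma~\ref{lemmaRescaledSobolev}, $u_n \to u$ strongly in $L^p(\Lambda)$.

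Next I would test the difference of the equations against $u_n - u$. Since $\dualprod{\mathcal{G}_\varepsilon'(u_n)}{u_n - u} \to 0$ (because $\mathcal{G}_\varepsilon'(u_n)\to 0$ and $(u_n - u)$ is bounded) and, by weak convergence, $\dualprod{\mathcal{G}_\varepsilon'(u)}{u_n - u} \to 0$, the quantity $\dualprod{\mathcal{G}_\varepsilon'(u_n) - \mathcal{G}_\varepsilon'(u)}{u_n - u}$ tends to $0$. Expanding,
\[
  \norm{u_n - u}^2 = \dualprod{\mathcal{G}_\varepsilon'(u_n) - \mathcal{G}_\varepsilon'(u)}{u_n - u} + \int_\Omega \scalprod{\mathsuper{g}_\varepsilon(u_n) - \mathsuper{g}_\varepsilon(u)}{u_n - u},
\]
and it remains to show that the last integral tends to $0$. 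Splitting it over $\Lambda$ and over $\Omega\setminus\Lambda$: on $\Lambda$, property $(g_2)$ gives the pointwise bound $\abs{\mathsuper{g}_\varepsilon(u_n)} \le \abs{u_n}^{p-1}$, so by H\"older the $\Lambda$-integral is controlled by $(\norm{u_n}_{L^p(\Lambda)}^{p-1} + \norm{u}_{L^p(\Lambda)}^{p-1})\norm{u_n - u}_{L^p(\Lambda)}$, which tends to $0$ by the strong $L^p(\Lambda)$ convergence and the boundedness of $\norm{u_n}_{L^p(\Lambda)}$ (Rescaled Sobolev, lemma~\ref{lemmaRescaledSobolev}, plus boundedness in the energy norm); on $\Omega\setminus\Lambda$, the definition of $g_\varepsilon$ gives $\abs{\mathsuper{g}_\varepsilon(u_n)} \le \mu\varepsilon^2 H \abs{u_n}$ there, so that integral is controlled by $\mu\varepsilon^2(\norm{u_n}_{L^2(\Omega\setminus\Lambda;H\dif x)} + \norm{u}_{L^2(\Omega\setminus\Lambda;H\dif x)})\norm{u_n - u}_{L^2(\Omega\setminus\Lambda;H\dif x)}$, which tends to $0$ by lemma~\ref{lemmaPositivityMagnetic}. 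Hence $\norm{u_n - u}^2 \to 0$, i.e.\ $u_n \to u$ strongly in $H^1_{V,A/\varepsilon^2}(\Omega)$.

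The main obstacle — and the only place requiring genuine care — is the handling of the nonlinear term on the unbounded tail $\Omega\setminus\Lambda$: one must not try to use a compact Sobolev embedding there (none is available, and indeed $\Omega$ may be all of $\R^N$ with fast-decaying $V$), but instead exploit that the penalization replaces the supercritical-at-infinity behaviour by the linear weight $\mu\varepsilon^2 H$, for which lemma~\ref{lemmaPositivityMagnetic} furnishes both the needed inequality and the needed compactness. A secondary technical point is to confirm that the pointwise a.e.\ convergence of $\mathsuper{g}_\varepsilon(u_n)$ (along a further subsequence, from $L^p(\Lambda)$ and $L^2(\Omega\setminus\Lambda;H\dif x)$ convergence) together with the uniform bounds allows passing to the limit so that $u$ is itself a critical point; this is not strictly needed for the Palais--Smale conclusion but makes the bookkeeping cleaner.
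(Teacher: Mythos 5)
Your proposal is correct and follows essentially the same route as the paper: coerciveness (lemma~\ref{lemmaCoerciveness}) gives boundedness, the compact embeddings of lemmas~\ref{lemmaPositivityMagnetic} and \ref{lemmaRescaledSobolev} give strong convergence in \(L^p(\Lambda)\) and \(L^2(\Omega\setminus\Lambda; H(x)\dif x)\), and the same identity \(\norm{u_n-u}^2 = \dualprod{\mathcal{G}_\varepsilon'(u_n)-\mathcal{G}_\varepsilon'(u)}{u_n-u} + \int_\Omega \scalprod{\mathsuper{g}_\varepsilon(u_n)-\mathsuper{g}_\varepsilon(u)}{u_n-u}\) concludes. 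The only (harmless) difference is that the paper first passes to the limit in the weak formulation to get \(\mathcal{G}_\varepsilon'(u)=0\), while you observe that \(\dualprod{\mathcal{G}_\varepsilon'(u)}{u_n-u}\to 0\) already follows from weak convergence, which slightly shortens the bookkeeping.
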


Compared to other penalizations choices \citelist{\cite{delPinoFelmer1996}\cite{AlvesFigueiredoFurtado2011}\cite{CingolaniSecchi2005}}, the use of the penalization potential simplifies considerably the proof of the Palais--Smale condition \cite{MorozVanSchaftingen2010}*{remark~3.5}. Indeed \(\mathcal{G}_\varepsilon'\) is a compact perturbation of the duality map from \(H^1_{0, V, A/\varepsilon^2} (\Omega)\) to \(\bigl(H^1_{0, V, A/\varepsilon^2} (\Omega)\bigr)^*\) and the bounded Palais--Smale condition follows by a general argument \citelist{\cite{Struwe2008}*{proposition 2.2}\cite{Rabinowitz1986}*{proof of proposition B.35}}.

\begin{proof}[Proof of lemma~\ref{lemmaPalaisSmale}]
By the coerciveness of the functional (lemma~\ref{lemmaCoerciveness}), the sequence \((u_n)_{n \in \N}\) is bounded in \(H^1_{V, A/\varepsilon^2} (\Omega)\). 
Hence, there exists \(u \in H^1_{V, A/\varepsilon^2} (\Omega)\) such that, up to a subsequence \(u_n \weakto u\) weakly in \(H^1_{0, V, A/\varepsilon^2} (\Omega)\) as \(n \to \infty\). 
By the compactness part of the embeddings of magnetic spaces (lemma~\ref{lemmaRescaledSobolev} and  lemmas~\ref{lemmaPositivityMagnetic}), since \(\frac{1}{2} - \frac{1}{N} < \frac{1}{p} < \frac{1}{2}\), \(u_n \to u\) strongly as \(n \to \infty\) in \(L^2 (\Omega \setminus \Lambda; H (x)\dif x)\) and in \(L^{p}(\Lambda)\).
Therefore, by the properties $(g_2)$ and $(g_3)$, \(g_\varepsilon (u_n) \to g_\varepsilon (u)\) in \(L^{\frac{p}{p - 1}}(\Lambda)\) and in \(L^2 (\Omega \setminus \Lambda; \dif x/H (x))\)
as \(n \to \infty\). 
In particular, for every \(\varphi \in H^1_{V, A/\varepsilon^2} (\Omega;\C)\), we have
\[
  \begin{split}
    \int_{\Omega} \varepsilon^2 \scalprod{D_{A/\varepsilon^2} u}{D_{A/\varepsilon^2} \varphi} + V \scalprod{u}{\varphi}
      &= \lim_{n \to \infty} \int_{\Omega} \varepsilon^2 \scalprod{D_{A/\varepsilon^2} u_n}{D_{A/\varepsilon^2} \varphi} + V \scalprod{u_n}{\varphi} \\
      & = \lim_{n \to \infty} \int_{\Omega} \scalprod{\mathsuper{g}_\varepsilon (u_n)}{\varphi} = \int_{\Omega} \scalprod{\mathsuper{g}_\varepsilon (u)}{\varphi},
  \end{split}
\]
and thus \(u\) solves \eqref{problemPenalized}, or by lemma~\ref{lemmaFunctional}, \(\mathcal{G}_\varepsilon' (u) = 0\).
We observe now that since \(\mathcal{G}_\varepsilon' (u_n) \to 0\) as \(n \to \infty\), since \(\mathcal{G}_\varepsilon' (u) = 0\) and since the sequence \((u_n)_{n \in \N}\) is bounded, as \(n \to \infty\),
\begin{multline*}
 \int_{\Omega} \varepsilon^2 \abs{D_{A/\varepsilon^2} (u_n - u)}^2 + V \abs{u_n - u}^2\\
 = \dualprod{\mathcal{G}_\varepsilon' (u_n)-\mathcal{G}_\varepsilon' (u)}{u_n - u}
 +  \int_{\R^N} \scalprod{u_n -u}{g_\varepsilon(u_n)-g_\varepsilon(u)} \to 0.\qedhere
\end{multline*}
\end{proof}

\section{Properties of the limiting functional}
\label{sectionLimiting}

In this section we study the properties of the limiting problem \eqref{problemLimit}.
The existence of solutions to this problem has been studied in the seminal work of M.\thinspace{}Esteban and P.-L.\thinspace{}Lions \cite{EstebanLions1989}. The next result is a reformulation of a part of their results \cite{EstebanLions1989}*{theorem 3.1}.

\begin{proposition}%
[Existence of groundstates for the limiting problem]%
\label{propositionExistenceLimiting}
Let \(A_* \in \Lin (\R^N; \bigwedge^1 \R^N)\) and \(V_* > 0\), there exists \(v_* \in H^1 (A)\)  that satisfies the limiting equation \eqref{problemLimit} and such that 
\[
  \mathcal{I}_{V_*, A_*} (v_*) = \mathcal{E} (V_*, A_*).
\]
Moreover, 
\[
 \mathcal{E} (V_*, A_*) = \bigl(\tfrac{1}{2} - \tfrac{1}{p}\bigr)
 \inf_{v \in H^1_{V_*,A_*} (\R^N)} \mathcal{S}_{V_*, A_*} (v)^\frac{p}{p - 2}.
\]
where the Sobolev functional \(\mathcal{S}_{V_*, A_*} : H^1_A (\R^N) \setminus \{0\} \to \R\) is defined for every \(v \in H^1_A (\R^N)\) by
\[
 \mathcal{S}_{V_*, A_*} (v) \defeq \frac{\displaystyle \int_{\R^N} \abs{D_{A_*} v}^2 + V_* \abs{v}^2}{\displaystyle \Bigl(\int_{\R^N} \abs{v}^p \Bigr)^\frac{2}{p}}.
\]
\end{proposition}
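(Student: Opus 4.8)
The plan is to prove Proposition~\ref{propositionExistenceLimiting} by the standard Nehari manifold / Sobolev quotient equivalence, combined with the concentration-compactness method of Esteban and Lions to deal with the loss of compactness on $\R^N$. I will first establish the variational identity between the minimax level $\mathcal{E}(V_*,A_*)$ and the Sobolev functional $\mathcal{S}_{V_*,A_*}$, and then produce a minimizer.

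\medskip

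\textbf{Step 1: Reduction to the Sobolev quotient.} For any $v \in H^1_{V_*,A_*}(\R^N) \setminus \{0\}$, the function $t \mapsto \mathcal{I}_{V_*,A_*}(tv)$ on $(0,\infty)$ has the form $\frac{t^2}{2}\|v\|^2 - \frac{t^p}{p}\int_{\R^N}\abs{v}^p$ with $p > 2$; hence it has a unique positive critical point $t(v)$ at which it attains its maximum, and $t(v)v$ lies on the Nehari manifold. A direct computation gives $t(v)^{p-2} = \|v\|^2 / \int \abs{v}^p$ and
\[
  \mathcal{I}_{V_*,A_*}\bigl(t(v)v\bigr) = \Bigl(\tfrac{1}{2}-\tfrac{1}{p}\Bigr)\,\frac{\|v\|^{\frac{2p}{p-2}}}{\bigl(\int\abs{v}^p\bigr)^{\frac{2}{p-2}}} = \Bigl(\tfrac{1}{2}-\tfrac{1}{p}\Bigr)\,\mathcal{S}_{V_*,A_*}(v)^{\frac{p}{p-2}}.
\]
Since every element of the Nehari manifold is of the form $t(v)v$ for some $v$, taking the infimum over $v$ yields the claimed formula for $\mathcal{E}(V_*,A_*)$, provided a minimizer of $\mathcal{S}_{V_*,A_*}$ exists. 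This step is purely algebraic and routine; the subtlety is that I should note the infimum of $\mathcal{S}_{V_*,A_*}$ is positive — this follows from the diamagnetic inequality (Lemma~\ref{lemmaDiamagnetic}), which reduces the lower bound to the nonmagnetic Gagliardo--Nirenberg--Sobolev inequality, valid because $\frac{1}{2}-\frac{1}{N} < \frac{1}{p} < \frac{1}{2}$.

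\medskip

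\textbf{Step 2: Existence of a minimizer.} Let $(v_n)_{n\in\N}$ be a minimizing sequence for $\mathcal{S}_{V_*,A_*}$; by homogeneity I may normalize $\int_{\R^N}\abs{v_n}^p = 1$, so that $\|v_n\|^2 \to \inf \mathcal{S}_{V_*,A_*}$ and $(v_n)$ is bounded in $H^1_{V_*,A_*}(\R^N)$. Applying the diamagnetic inequality, $(\abs{v_n})$ is bounded in $H^1(\R^N)$, and I run the concentration-compactness lemma on the measures $\abs{v_n}^p\,\mathrm{d}x$. Vanishing is excluded because $\int\abs{v_n}^p = 1$ together with the vanishing lemma would force $\abs{v_n} \to 0$ in $L^p$; dichotomy is excluded by the strict subadditivity of the minimization problem, which here is immediate since the problem is translation invariant with a constant vector potential $A_*$ and constant $V_*$ (the relevant subadditivity inequality $\mathcal{E}(V_*,A_*) < $ sum of split energies is strict because of the $p$-homogeneity scaling, exactly as in the nonmagnetic case). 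Hence, after translating, $v_n \weakto v$ weakly in $H^1_{V_*,A_*}(\R^N)$ with $v \neq 0$, and $\abs{v_n} \to \abs{v}$ in $L^p_{\mathrm{loc}}$; the no-dichotomy/no-vanishing information upgrades this to $v_n \to v$ strongly in $L^p(\R^N)$, so $\int\abs{v}^p = 1$ and, by weak lower semicontinuity of the norm, $\|v\|^2 \le \liminf\|v_n\|^2 = \inf\mathcal{S}_{V_*,A_*}$, whence $v$ is a minimizer. Rescaling $v_* \defeq t(v)v$ and using Step~1 gives $\mathcal{I}_{V_*,A_*}(v_*) = \mathcal{E}(V_*,A_*)$.

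\medskip

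\textbf{Step 3: The Euler--Lagrange equation.} Since $v_*$ minimizes $\mathcal{I}_{V_*,A_*}$ on the Nehari manifold and the Nehari manifold is a natural constraint (the Lagrange multiplier vanishes because $\frac{\mathrm{d}}{\mathrm{d}t}\dualprod{\mathcal{I}_{V_*,A_*}'(tv_*)}{tv_*}\big|_{t=1} \neq 0$, again by $p > 2$), $v_*$ is a free critical point of $\mathcal{I}_{V_*,A_*}$, i.e.\ a weak solution of \eqref{problemLimit}. The main obstacle in the whole argument is Step~2 — specifically, ruling out dichotomy in the concentration-compactness dichotomy, which requires the strict subadditivity inequality; everything else is either homogeneity bookkeeping or a direct invocation of the diamagnetic inequality together with classical Sobolev embeddings. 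Since this is a reformulation of \cite{EstebanLions1989}*{theorem 3.1}, I would in fact simply cite that theorem for Step~2 and Step~3 and present Step~1 in full as the new bookkeeping.
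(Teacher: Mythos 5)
Your proposal is correct and follows essentially the same route as the paper: the paper likewise invokes \cite{EstebanLions1989}*{theorem 3.1} for the existence of a minimizer of \(\mathcal{S}_{V_*,A_*}\) (rescaled to solve \eqref{problemLimit}) and then derives the identity \(\mathcal{E}(V_*,A_*)=\bigl(\tfrac12-\tfrac1p\bigr)\inf\mathcal{S}_{V_*,A_*}^{\,p/(p-2)}\) from the Nehari-manifold algebra you carry out in Step~1. The only caveat, immaterial since you defer to the citation anyway, is that in your concentration-compactness sketch \(A_*\) is linear rather than constant, so ``translating'' a minimizing sequence must be accompanied by a gauge phase (magnetic translations), as in the paper's proof of proposition~\ref{propositionEcontinuous}.
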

\begin{proof}
The existence of a minimizer of a minimizer of the functional \(\mathcal{S}_{V_*, A_*}\) was proved by M.\thinspace{}Esteban and P.-L. Lions \cite{EstebanLions1989}*{theorem 3.1}. 
Up to multiplication by a positive constant, it can be chosen to satisfy the equation \eqref{problemLimit}.

Finally, if \(v \in H^1 (A)\), and \(\dualprod{\mathcal{I}_{V_*, A_*} (v)}{v} = 0\), then 
\[
 \mathcal{I}_{V_*, A_*} (u)
 = \bigl(\tfrac{1}{2} - \tfrac{1}{p} \bigr)\bigl(\mathcal{S}_{V_*, A_*} (v)\bigr)^{\frac{p}{p - 2}},
\]
and thus we conclude that 
\[
 \mathcal{E} (V_*, A_*) = \bigl(\tfrac{1}{2} - \tfrac{1}{p}\bigr) \inf_{v \in H^1_A (\R^N)} \mathcal{S} (v)^\frac{p}{p - 2}.\qedhere
\]
\end{proof}

The results of M.\thinspace{}Esteban and P.-L. Lions covers in fact the more general case \(V_* > -\abs{dA_*}\).

\begin{lemma}[Characterization of the limiting action by smooth test functions]
\label{lemmaCharacterizationE}
For every \(V_* \in \R^+\) and \(A_* \in \Lin (\R^N; \bigwedge^1 \R^N)\), 
\[
  \mathcal{E} (A) = \inf_{v \in C^1_c (\R^N; \C) \setminus \{0\}} \sup_{t > 0} \mathcal{I}_{V_*, A_*} (tv)
  = \bigl(\tfrac{1}{2} - \tfrac{1}{p}\bigr)
 \inf_{v \in C^1_c (\R^N; \C)\setminus \{0\}} \mathcal{S}_{V_*, A_*} (v)^\frac{p}{p - 2}.
\]
\end{lemma}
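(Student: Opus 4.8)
The plan is to reduce both equalities to an elementary one-variable computation together with the density of \(C^1_c (\R^N; \C)\) in the magnetic Sobolev space \(H^1_{V_*, A_*} (\R^N)\). For a fixed \(v \ne 0\) (lying in \(H^1_{V_*, A_*} (\R^N)\) or in \(C^1_c (\R^N; \C)\)) one has \(\mathcal{I}_{V_*, A_*} (tv) = \tfrac{t^2}{2}\, a - \tfrac{t^p}{p}\, b\) with \(a \defeq \int_{\R^N} \abs{D_{A_*} v}^2 + V_* \abs{v}^2 > 0\) and \(b \defeq \int_{\R^N} \abs{v}^p > 0\); since \(p > 2\), the function \(t \mapsto \mathcal{I}_{V_*, A_*} (tv)\) attains its maximum on \((0, \infty)\) at \(t = (a/b)^{1/(p-2)}\), with value \((\tfrac{1}{2} - \tfrac{1}{p})\, \mathcal{S}_{V_*, A_*} (v)^{p/(p-2)}\). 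Hence the second equality in the statement is precisely this identity, taken over \(v \in C^1_c (\R^N; \C) \setminus \{0\}\); and, using Proposition~\ref{propositionExistenceLimiting} to rewrite \(\mathcal{E} (V_*, A_*) = (\tfrac{1}{2} - \tfrac{1}{p}) \inf \{ \mathcal{S}_{V_*, A_*} (v)^{p/(p-2)} \st v \in H^1_{V_*, A_*} (\R^N) \setminus \{0\} \}\), the first equality follows once we prove
\[
  \inf_{v \in H^1_{V_*, A_*} (\R^N) \setminus \{0\}} \mathcal{S}_{V_*, A_*} (v) = \inf_{v \in C^1_c (\R^N; \C) \setminus \{0\}} \mathcal{S}_{V_*, A_*} (v).
\]

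The inequality ``\(\le\)'' is clear because \(C^1_c (\R^N; \C) \subset H^1_{V_*, A_*} (\R^N)\): only the condition \(D_{A_*} v \in L^2 (\R^N)\) is not immediate, and it holds since \(v\) is compactly supported and \(A_*\) is linear, hence bounded on \(\supp v\). For ``\(\ge\)'', fix \(v \in H^1_{V_*, A_*} (\R^N) \setminus \{0\}\); by the diamagnetic inequality \(\abs{v} \in H^1 (\R^N)\), and since \(\tfrac{1}{2} - \tfrac{1}{N} < \tfrac{1}{p}\), i.e.\ \(p < 2N/(N-2)\), the Sobolev embedding gives \(v \in L^p (\R^N)\), so \(\mathcal{S}_{V_*, A_*} (v)\) is finite. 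I would then approximate \(v\) by \(C^1_c\) functions in the norm \(\norm{w}_{V_*, A_*} \defeq \bigl( \int_{\R^N} \abs{D_{A_*} w}^2 + V_* \abs{w}^2 \bigr)^{1/2}\), in two steps. \emph{Truncation:} with cut-offs \(\eta_R \in C^\infty_c (\R^N)\), \(0 \le \eta_R \le 1\), \(\eta_R = 1\) on \(B_R\), \(\supp \eta_R \subset B_{2R}\), \(\abs{D \eta_R} \le C/R\), one has \(\norm{v \eta_R - v}_{V_*, A_*} \to 0\) as \(R \to \infty\), since \(D_{A_*} (v \eta_R) - D_{A_*} v = (\eta_R - 1) D_{A_*} v + v\, D \eta_R\) and \(\norm{v\, D \eta_R}_{L^2} \le (C/R) \norm{v}_{L^2} \to 0\) --- this uses only \(v, D_{A_*} v \in L^2 (\R^N)\). \emph{Mollification:} each \(v \eta_R\) is compactly supported, and on a bounded neighbourhood of \(\supp (v \eta_R)\) the form \(A_*\) is bounded, so \(v \eta_R\) belongs to \(H^1\) there and its mollifications \((v \eta_R) * \phi_\delta \in C^\infty_c (\R^N; \C) \subset C^1_c (\R^N; \C)\) converge to \(v \eta_R\) in \(H^1\) on that neighbourhood, hence in \(\norm{\cdot}_{V_*, A_*}\), as \(\delta \to 0\). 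A diagonal extraction produces \(w_k \in C^1_c (\R^N; \C)\) with \(\norm{w_k - v}_{V_*, A_*} \to 0\). Finally, \(\mathcal{S}_{V_*, A_*}\) is continuous on \(\{ w \ne 0 \}\) for this norm: its numerator equals \(\norm{w}_{V_*, A_*}^2\), while, using \(\bigabs{\abs{w_k} - \abs{v}} \le \abs{w_k - v}\), the diamagnetic inequality and \(H^1 (\R^N) \hookrightarrow L^p (\R^N)\),
\[
  \Bigabs{\Bigl( \int_{\R^N} \abs{w_k}^p \Bigr)^{1/p} - \Bigl( \int_{\R^N} \abs{v}^p \Bigr)^{1/p}}
  \le \Bigl( \int_{\R^N} \abs{w_k - v}^p \Bigr)^{1/p}
  \le C \norm{w_k - v}_{V_*, A_*} \longrightarrow 0,
\]
so \(w_k \ne 0\) for \(k\) large and \(\mathcal{S}_{V_*, A_*} (w_k) \to \mathcal{S}_{V_*, A_*} (v)\); taking the infimum over \(v\) gives ``\(\ge\)''.

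The main obstacle is the density step. Because \(A_*\) grows at infinity one cannot mollify globally, so the argument must first truncate at infinity --- which is free, using only the membership of \(v\) and \(D_{A_*} v\) in \(L^2 (\R^N)\) --- and only afterwards regularize on a bounded region where \(A_*\) is bounded and ordinary \(H^1\) theory applies. The other point requiring care is that the \(L^p\) denominator of \(\mathcal{S}_{V_*, A_*}\) must pass to the limit; this is exactly where the subcriticality \(p < 2N/(N-2)\) is used, via the diamagnetic inequality and the Sobolev embedding.
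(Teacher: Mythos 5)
Your proposal is correct and follows exactly the route the paper intends: the paper disposes of this lemma with the one-line remark that it ``is proved by a direct density argument,'' and your argument is precisely that, namely the elementary maximization along rays identifying \(\sup_{t>0}\mathcal{I}_{V_*,A_*}(tv)\) with \(\bigl(\tfrac12-\tfrac1p\bigr)\mathcal{S}_{V_*,A_*}(v)^{p/(p-2)}\), combined with density of \(C^1_c(\R^N;\C)\) obtained by truncation at infinity followed by mollification where the linear potential \(A_*\) is bounded, and continuity of \(\mathcal{S}_{V_*,A_*}\) via the diamagnetic inequality and the subcritical Sobolev embedding. Your write-up simply supplies the details the paper omits.
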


This lemma is proved by a direct density argument. The main interest is that the class of functions appearing in the variational principle \emph{does not depend} on the magnetic potential \(A_*\).

\begin{proposition}[Properties of the action of the limiting problem]
\label{propositionPropertiesLimiting}
Let \(V_* \in \R^+\) and \(A_* \in \Lin (\R^N; \bigwedge^1 \R^N)\).
\begin{enumerate}[(i)]
  \item (invariance under isometries) \label{itInvariance} If \(\iota : \R^N \to \R^N\) is a linear isometry, then 
  \[
    \mathcal{E} (V_*, \iota_\# A_*) = \mathcal{E} (V_*, A_*). 
  \]
  \item (gauge invariance) \label{itGauge} If \(dA_*=d\Tilde{A}_*\), then
  \[
    \mathcal{E} (V_*, \Tilde{A}_*) = \mathcal{E} (V_*, A_*) ,
  \]
   \item (scaling of the electromagnetic potential)\label{itScaling} For every \(\lambda > 0\),
  \[
    \mathcal{E} (\lambda^2 V_*, \lambda A_*) = \lambda^{\frac{4}{p - 2} - (N - 2)} \mathcal{E} (V_*, A_*),
  \]
  \item (monotonicity with respect to the electric potential) \label{itMonotonicity} If \(\Tilde{V}_* > V_*\), then 
  \[
    \mathcal{E} (\Tilde{V}_*, A_*) > \mathcal{E} (V_*, A_*),
  \]
  \item (diamagnetic inequality) \label{itDiamagnetic} If \(dA_* \ne 0\), then 
  \[
    \mathcal{E} (V_*, A_*) > \mathcal{E} (V_*, 0),
  \]
\end{enumerate}
\end{proposition}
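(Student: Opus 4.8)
The plan is to reduce all five assertions to elementary properties of the Sobolev quotient \(\mathcal{S}_{V_*, A_*}\), exploiting proposition~\ref{propositionExistenceLimiting} — which identifies \(\mathcal{E}(V_*, A_*)\) with \(\bigl(\tfrac12-\tfrac1p\bigr)\bigl(\inf_v \mathcal{S}_{V_*, A_*}(v)\bigr)^{p/(p-2)}\) and guarantees that this infimum is attained — and lemma~\ref{lemmaCharacterizationE}, which lets one restrict the infimum to the fixed class \(C^1_c(\R^N;\C)\), independent of \(A_*\). Since \(\inf\mathcal{S}_{V_*,A_*} > 0\) (by the diamagnetic inequality and the nonmagnetic case) and \(t\mapsto\bigl(\tfrac12-\tfrac1p\bigr)t^{p/(p-2)}\) is strictly increasing on \([0,\infty)\), each (in)equality for \(\mathcal{E}\) is equivalent to the corresponding one for \(\inf\mathcal{S}\).

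For \emph{(i)}, if \(\iota(x) = Rx + b\) with \(R\) orthogonal, then \(v\mapsto v\circ\iota^{-1}\) is a bijection of \(C^1_c(\R^N;\C)\) preserving \(\int\abs{v}^2\) and \(\int\abs{v}^p\), and a chain-rule computation gives \(D_{\iota_\#A_*}(v\circ\iota^{-1}) = R\,(D_{A_*}v)\circ\iota^{-1}\), hence \(\abs{D_{\iota_\#A_*}(v\circ\iota^{-1})} = \abs{D_{A_*}v}\circ\iota^{-1}\); therefore \(\mathcal{S}_{V_*,\iota_\#A_*}(v\circ\iota^{-1}) = \mathcal{S}_{V_*,A_*}(v)\) and the two infima coincide. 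For \emph{(ii)}, since \(A_* - \Tilde A_*\) is a closed, linear \(1\)-form on the simply connected \(\R^N\), it equals \(D\psi\) for a quadratic form \(\psi\); then \(v\mapsto\e^{\imath\psi}v\) is a bijection of \(C^1_c(\R^N;\C)\) preserving \(\int\abs{v}^2\) and \(\int\abs{v}^p\), and a direct computation gives \(D_{\Tilde A_*}(\e^{\imath\psi}v) = \e^{\imath\psi}D_{A_*}v\), so once more the infima of \(\mathcal{S}\) coincide.

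For \emph{(iii)}, given \(\lambda>0\) and \(v\in H^1_{V_*,A_*}(\R^N)\), set \(w\defeq v(\lambda\,\cdot\,)\); using that \(A_*\) is linear, so \(A_*(x) = \lambda^{-1}A_*(\lambda x)\), one obtains \(D_{\lambda A_*}w(x) = \lambda\,(D_{A_*}v)(\lambda x)\), and after the change of variables \(y=\lambda x\),
\[
  \int_{\R^N}\abs{D_{\lambda A_*}w}^2 + \lambda^2 V_*\abs{w}^2 = \lambda^{2-N}\int_{\R^N}\abs{D_{A_*}v}^2 + V_*\abs{v}^2, \qquad \int_{\R^N}\abs{w}^p = \lambda^{-N}\int_{\R^N}\abs{v}^p .
\]
Hence \(\mathcal{S}_{\lambda^2 V_*,\lambda A_*}(w) = \lambda^{2-N+2N/p}\,\mathcal{S}_{V_*,A_*}(v)\); taking the infimum and raising to the power \(\tfrac{p}{p-2}\) yields the exponent \(\bigl(2-N+\tfrac{2N}{p}\bigr)\tfrac{p}{p-2} = \tfrac{4}{p-2} - (N-2)\), which is \emph{(iii)}. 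For \emph{(iv)}, let \(v_*\) be a minimizer of \(\mathcal{S}_{\Tilde V_*,A_*}\) (proposition~\ref{propositionExistenceLimiting}); since \(v_*\ne0\) and \(\int_{\R^N}\abs{v_*}^2 < \infty\), we have \(v_*\in H^1_{V_*,A_*}(\R^N)\) and \(\mathcal{S}_{V_*,A_*}(v_*) < \mathcal{S}_{\Tilde V_*,A_*}(v_*) = \inf\mathcal{S}_{\Tilde V_*,A_*}\), so \(\inf\mathcal{S}_{V_*,A_*} < \inf\mathcal{S}_{\Tilde V_*,A_*}\) and \emph{(iv)} follows.

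The main obstacle is \emph{(v)}. The inequality \(\mathcal{E}(V_*,A_*)\ge\mathcal{E}(V_*,0)\) is immediate from the diamagnetic inequality (lemma~\ref{lemmaDiamagnetic}), since \(\mathcal{S}_{V_*,0}(\abs{v})\le\mathcal{S}_{V_*,A_*}(v)\). For the strict inequality I would argue by contradiction: assuming \(\inf\mathcal{S}_{V_*,A_*} = \inf\mathcal{S}_{V_*,0}\) and taking a minimizer \(v_*\) of \(\mathcal{S}_{V_*,A_*}\), one finds that \(\abs{v_*}\) minimizes \(\mathcal{S}_{V_*,0}\) and that equality holds a.e.\ in the diamagnetic inequality. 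Being, up to a positive multiplicative constant, a nonnegative nontrivial weak solution of \(-\Delta w + V_* w = w^{p-1}\), the function \(\abs{v_*}\) is continuous and strictly positive on \(\R^N\) by De Giorgi--Nash--Moser regularity and the strong maximum principle. Hence \(v_* = \abs{v_*}\e^{\imath\theta}\) for some \(\theta\in H^1_{\mathrm{loc}}(\R^N)\), obtained by lifting the \(S^1\)-valued map \(v_*/\abs{v_*}\) (possible since \(\abs{v_*}>0\) and \(\R^N\) is simply connected), and pointwise a.e.
\[
  \abs{D_{A_*}v_*}^2 = \bigabs{D\abs{v_*}}^2 + \abs{v_*}^2\abs{D\theta + A_*}^2 ,
\]
so equality in the diamagnetic inequality forces \(D\theta + A_* = 0\) a.e., whence \(dA_* = 0\), contradicting \(dA_*\ne0\). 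I expect the delicate points to be the implication ``\(\abs{v_*}\) minimizes \(\mathcal{S}_{V_*,0}\) \(\Rightarrow\) \(\abs{v_*}>0\) everywhere'', which rests on the regularity theory and the maximum principle (and uses \(\tfrac12-\tfrac1N<\tfrac1p<\tfrac12\) and \(V_*>0\)), and the careful treatment of the equality case together with the existence of a Sobolev-regular phase \(\theta\).
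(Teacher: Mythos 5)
For parts (i), (ii), (iv) and (v) your argument follows essentially the paper's own route. Working with the quotient \(\mathcal{S}_{V_*,A_*}\) via proposition~\ref{propositionExistenceLimiting} and lemma~\ref{lemmaCharacterizationE} instead of directly with \(\mathcal{I}_{V_*,A_*}\) is a cosmetic difference; your (iv) (evaluating \(\mathcal{S}_{V_*,A_*}\) at a minimizer of \(\mathcal{S}_{\Tilde{V}_*,A_*}\)) is exactly the paper's use of the attained infimum to obtain strictness; and your treatment of the equality case in (v) --- \(\abs{v_*}\) attains \(\mathcal{E}(V_*,0)\), strict positivity of \(\abs{v_*}\) by the Euler--Lagrange equation, regularity theory and the strong maximum principle, a phase lifting \(v_*=\e^{\imath\theta}\abs{v_*}\) with \(\theta\in H^1_{\mathrm{loc}}\), and the conclusion \(D\theta+A_*=0\) a.e., contradicting \(dA_*\ne 0\) --- is word for word the paper's proof, including the delicate points you correctly single out.

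The genuine problem is (iii). Since \(A_*\) is a \emph{linear} map, the potential term in \(D_{A_*}v(y)=Dv(y)+\imath v(y)A_*(y)\) already scales with \(y\), and your key identity \(D_{\lambda A_*}\bigl(v(\lambda\cdot)\bigr)(x)=\lambda\,(D_{A_*}v)(\lambda x)\) is false: with \(w=v(\lambda\cdot)\) one gets \(D_{\lambda A_*}w(x)=\lambda (Dv)(\lambda x)+\imath v(\lambda x)A_*(\lambda x)\), whereas \(\lambda (D_{A_*}v)(\lambda x)=\lambda (Dv)(\lambda x)+\imath\lambda v(\lambda x)A_*(\lambda x)\), so the magnetic term is off by a factor \(\lambda\). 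The identity that does hold is \(D_{\lambda^2 A_*}\bigl(v(\lambda\cdot)\bigr)(x)=\lambda\,(D_{A_*}v)(\lambda x)\); repeating your change of variables with it gives the same exponent \(\tfrac{4}{p-2}-(N-2)\) but with \(\lambda^2 A_*\) paired with \(\lambda^2 V_*\), which is also what dimensional analysis dictates (the constant field \(dA_*\), like \(V_*\), scales as an inverse length squared). Note moreover that the formula as printed, with \(\lambda A_*\), is incompatible with (v) together with continuity of \(\mathcal{E}\): combining the two homogeneity laws would force \(\mathcal{E}(V_*,\lambda A_*)\) to be independent of \(\lambda\), hence equal to \(\mathcal{E}(V_*,0)\). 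So the paper's one-line proof of (iii) appears to carry the same \(\lambda\) versus \(\lambda^2\) slip, and your computation reproduces the printed formula only through this dropped factor; as a proof, the step as written fails and should be redone with the corrected identity (and the corrected statement).
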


In this statement \(\iota_\# A\) denotes the \emph{pull-back} of the differential form \(A\) by the map \(\iota\): for every \(x \in \R^N\) and \(v \in \R^N\),
\[
  \iota_\# A_* (x) [v] = A (\iota (x))\bigl[D \iota[v]\bigr].
\]
In our particular case, since \(\iota\) is linear it can be written simply as \(\iota_\# A (x) [v] A_* (\iota (x))\bigl[\iota [v]\bigr]\). We also note that since \(A_*\) is linear, \(d A_* (x) [v, w] = A (v)[w] - A (w)[v]\).

\begin{proof}[Proof of proposition~\ref{propositionPropertiesLimiting}]
The invariance under isometries \eqref{itInvariance} follows from the fact that \(D_{\iota_\# A_*} (v \circ \iota) = (D_{A_*} v) \circ \iota\).
For the gauge property \eqref{itGauge}, recall that for every \(y \in \R^N\) and every \(k, h \in \R^N\), 
\[
  d A_* (y) [h, k] = A_* (h)[k] - A_* (k)[h].
\]
and therefore
\[
  \Tilde{A}_* (y)[k] = A_* (y) - D \varphi (y), 
\]
where the quadratic form \(\varphi : \R^N \to \R\) is defined for every \(y \in \R^N\) by
\[
  \varphi (y) \defeq A_* (y)[y] - \Tilde{A}_* (y)[y].
\]
The conclusion follows then from the fact that for every \(v \in H^1_{V_*, A_*} (\R^N)\), \(\e^{\imath \varphi} v \in H^1_{V_*, \Tilde{A}_*} (\R^N)\) and
\[
  D_{\Tilde{A}_*} (\e^{\imath \varphi} v) = D_{A_*} v
\]
and the statement follows.

The scaling statement \eqref{itScaling} follows by noting that for every \(v \in H^1_{\lambda^2 V_*, \lambda A_*} (\R^N)\), the function \(v_\lambda : \R^N \to \C\) defined for every \(y \in \R^N\) by 
\[                                                                                                                                                                                
  v_\lambda (y) \defeq \lambda^\frac{2}{p - 2} v (\lambda y)  
\]
is in the space \(H^1_{V_*, A_*} (\R^N)\) and 
\[
  \mathcal{I}_{\lambda^2 V_*, \lambda A_*} (v_\lambda) 
  = \lambda^{\frac{4}{p - 2} - (N - 2)} \mathcal{I}_{V_*, A_*} (v).
\]

For the monotonicity with respect to the electric field \eqref{itMonotonicity}, for every \(v \in H^1_{\Tilde{V}_*, A_*} (\R^N)\), we have \(v \in H^1_{V_*, A_*} (\R^N)\) and 
\[
  \mathcal{I}_{\Tilde{V}_*, A_*} (v) > \mathcal{I}_{V_*, A_*} (v) .
\]
Since the infimum \(\mathcal{E} (\Tilde{V}_*, A_*)\) is achieved by proposition~\ref{propositionExistenceLimiting}, we conclude that \(\mathcal{E} (\Tilde{V}_*, A_*) > \mathcal{E} (V_*, A_*)\).

Finally, for the diamagnetic inequality \eqref{itDiamagnetic}, for every  \(v \in H^1_{V_*, A_*} (\R^N)\), by the diamagnetic inequality (lemma~\ref{lemmaDiamagnetic}), 
\[
  \mathcal{I}_{V_*, 0} (\abs{v}) \le \mathcal{I}_{V_*, A_*} (v).
\]
It follows then that \(\mathcal{E} (V_*, 0) \le \mathcal{E} (V_*, A_*)\).
If there is equality, then by proposition~\ref{propositionExistenceLimiting} \(\mathcal{E} (V_*, A_*)\) is achieved by some \(v_* \in H^1_{V_*, A_*} (\R^N)\). Moreover, 
\begin{equation}
\label{diamagneticequality}
  \int_{\R^N} \abs{D \abs{v_*}}^2 + V_* \abs{v_*}^2
  =\int_{\R^N} \abs{D_{A_*} v_*}^2 + V_* \abs{v_*}^2.
\end{equation}
In particular \(\abs{v}_*\) achieves \(\mathcal{E} (V_*, 0)\). By the corresponding Euler-Lagrange equation \eqref{problemLimit}, regularity theory and the strong maximum principle the function \(\abs{v_*}\) is locally bounded away from \(0\). Therefore, there exists \(\varphi \in H^1_{\mathrm{loc}} (\R^N)\) such that \(v_* = \e^{\imath \varphi} \abs{v_*}\). 
By \eqref{diamagneticequality}, we have \(D \varphi + A_* = 0\) almost everywhere in \(\R^N\), which is a contradiction if \(d A_* \ne 0\).
\end{proof}

Finally, the action of the limiting problem is a continuous function of the electromagnetic potential. 

\begin{proposition}[Continuity of the action of the limiting problem]
\label{propositionEcontinuous}
The function \(\mathcal{E} : \R^+ \times \Lin (\R^N; \bigwedge^1 \R^N) \to \R^+\) is continuous.
\end{proposition}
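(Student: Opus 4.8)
The plan is to use the variational characterization from Lemma~\ref{lemmaCharacterizationE}, namely
\[
  \mathcal{E}(V_*, A_*) = \Bigl(\tfrac{1}{2} - \tfrac{1}{p}\Bigr) \inf_{v \in C^1_c(\R^N;\C)\setminus\{0\}} \mathcal{S}_{V_*, A_*}(v)^{\frac{p}{p-2}},
\]
which has the decisive advantage that the admissible class $C^1_c(\R^N;\C)\setminus\{0\}$ does \emph{not} depend on the parameters $(V_*, A_*)$. Since $t \mapsto t^{p/(p-2)}$ is continuous and strictly increasing on $\R^+$, it suffices to prove that $(V_*, A_*) \mapsto \inf_{v} \mathcal{S}_{V_*, A_*}(v)$ is continuous on $\R^+ \times \Lin(\R^N; \bigwedge^1 \R^N)$; and since an infimum of a family of continuous functions is always upper semicontinuous, the real work is to establish \emph{lower} semicontinuity — or, more robustly, to show directly that $\mathcal{S}_{V_*, A_*}(v)$ depends on $(V_*, A_*)$ in a way that is locally uniform over the competitors $v$ relevant to the infimum.

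First I would fix $(V_*, A_*)$ with $V_* > 0$ and a sequence $(V_n, A_n) \to (V_*, A_*)$ (the case $V_* = 0$ is handled separately, or one notes that $\mathcal{S}_{0,A_*}$ still makes sense and the scaling/monotonicity properties pin down the boundary behaviour). For the upper bound: given $\delta > 0$, pick $v \in C^1_c(\R^N;\C)$ nearly optimal for $\mathcal{S}_{V_*, A_*}$. On the fixed compact set $K = \supp v$, we have $\int_K \abs{D_{A_n} v}^2 + V_n \abs{v}^2 \to \int_K \abs{D_{A_*} v}^2 + V_* \abs{v}^2$ because $A_n \to A_*$ uniformly on $K$ (convergence in $\Lin(\R^N;\bigwedge^1\R^N)$ gives $\abs{A_n(x) - A_*(x)} \le \abs{A_n - A_*}_{\Lin}\,\abs{x} \to 0$ uniformly on $K$) and $V_n \to V_*$; the denominator $\int \abs{v}^p$ is untouched. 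Hence $\limsup_n \inf \mathcal{S}_{V_n, A_n} \le \mathcal{S}_{V_*, A_*}(v) \le \inf \mathcal{S}_{V_*, A_*} + \delta'$, giving upper semicontinuity. For the lower bound I would argue by contradiction: if $\inf \mathcal{S}_{V_n, A_n} \le \inf \mathcal{S}_{V_*, A_*} - \eta$ along a subsequence, pick for each $n$ a competitor $v_n \in C^1_c$ with $\mathcal{S}_{V_n, A_n}(v_n) \le \inf\mathcal{S}_{V_*,A_*} - \eta/2$; normalize $\int \abs{v_n}^p = 1$. The diamagnetic inequality gives $\int \abs{D\abs{v_n}}^2 + \underline{V}\abs{v_n}^2 \le \int \abs{D_{A_n} v_n}^2 + V_n \abs{v_n}^2 \le C$ for $n$ large (with $\underline{V} = \inf_n \inf V_n > 0$ once $V_n$ is bounded below, which holds for $n$ large since $V_* > 0$ — here one uses that $V_n \to V_*$ uniformly, or at least $V_n \to V_*$ in a sense controlling the relevant integrals; in the application $V$ is a genuine function but $\mathcal{E}$ is evaluated pointwise so $V_*, V_n$ are just numbers and uniform lower bounds are immediate). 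Thus $\abs{v_n}$ is bounded in $H^1_{\underline{V}, 0}(\R^N)$ with unit $L^p$ norm.

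The main obstacle is the usual one for such limiting problems: the functional $\mathcal{S}$ is translation invariant (as $A_n \to A_*$ and $V_n, V_*$ are constants), so a minimizing sequence can slide off to infinity and lose compactness. I would resolve this exactly as Esteban--Lions do for a single $\mathcal{S}_{V_*, A_*}$ (cited here as \cite{EstebanLions1989}*{theorem 3.1}): apply the concentration-compactness principle to $\abs{v_n}^p$. Vanishing is excluded because $\int\abs{v_n}^p = 1$ together with the $H^1$ bound would otherwise force the numerator to blow up via a Lions-type lemma, contradicting the uniform bound; dichotomy is excluded by the strict subadditivity of the limiting energy, which follows from the scaling property (iii) in the preceding Proposition applied to the nonmagnetic comparison energy $\mathcal{E}(V_*, 0)$, since after translating the magnetic field is locally essentially constant and the leading-order problem is the nonmagnetic one. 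Hence, after translating $v_n$, a subsequence of $\abs{v_n}$ converges weakly in $H^1_{\underline V}$ and strongly in $L^p_{\mathrm{loc}}$ to some $w \ne 0$. By weak lower semicontinuity and the diamagnetic inequality applied in the limit, $\mathcal{S}_{V_*, 0}(w) \le \liminf_n \mathcal{S}_{V_n, A_n}(v_n) \le \inf \mathcal{S}_{V_*, A_*} - \eta/2$. But $\mathcal{S}_{V_*,0}(w) \ge \inf \mathcal{S}_{V_*, 0}$, and by the diamagnetic property (v) of the preceding Proposition (rephrased for $\mathcal{S}$) $\inf \mathcal{S}_{V_*, 0} \le \inf \mathcal{S}_{V_*, A_*}$ with the right orientation — wait, this inequality goes the wrong way; so instead I would keep the magnetic term by noting that after translation $A_n$ need not be small, and I should translate the \emph{potential} $A_n$ by a gauge so that $A_n(x_n) = 0$ and then exploit that on balls of fixed radius $A_n - (\text{translated } A_*)$ is uniformly small, recovering $\mathcal{S}_{V_*, \tau_{x_n}^\# A_*}(w) \le \inf \mathcal{S}_{V_*, A_*} - \eta/2$ in the limit; by isometry-invariance (i) of the preceding Proposition the right-hand comparison energy is unchanged, yielding $\inf \mathcal{S}_{V_*, A_*} \le \inf \mathcal{S}_{V_*, A_*} - \eta/2$, a contradiction. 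The delicate point requiring care is therefore the bookkeeping of translations together with gauge changes so that the magnetic potential stays controlled along the minimizing sequence; everything else is continuity of integrals on fixed compact sets plus the Esteban--Lions compactness machinery.
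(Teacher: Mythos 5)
Your upper semicontinuity step and your nonvanishing step (Lions' lemma combined with the diamagnetic inequality) coincide with the paper's argument, and your final translation-plus-gauge bookkeeping is exactly the right device: since \(A_n\) is linear, the function \(\Tilde v_n(y)=\e^{-\imath A_n(a_n)[y]}v_n(y-a_n)\) satisfies \(D_{A_n}\Tilde v_n(y)=\e^{-\imath A_n(a_n)[y]}(D_{A_n}v_n)(y-a_n)\), so after translating and gauging the potential is literally unchanged — no ``uniformly small on balls'' approximation is needed. The genuine gap is your treatment of dichotomy, that is, of the possible loss of \(L^p\) mass at infinity. Because you work with near-minimizers \(v_n\in C^1_c\) normalized by \(\int_{\R^N}\abs{v_n}^p=1\), the weak limit \(w\) of the translated--gauged sequence may satisfy \(\int_{\R^N}\abs{w}^p<1\); since \(\mathcal{S}\) is a quotient, weak lower semicontinuity of the numerator then does \emph{not} give \(\mathcal{S}_{V_*,A_*}(w)\le\liminf_{n\to\infty}\mathcal{S}_{V_n,A_n}(v_n)\), and your contradiction evaporates. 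The justification you offer to exclude this — strict subadditivity obtained from the scaling property ``since after translating the magnetic field is locally essentially constant and the leading-order problem is the nonmagnetic one'' — is incorrect in this r\'egime: in the strong-field normalization the magnetic term survives at leading order in the limiting problem (indeed \(\mathcal{E}(V_*,A_*)>\mathcal{E}(V_*,0)\) strictly whenever \(dA_*\ne0\)), so the problem is not asymptotically nonmagnetic, and strict subadditivity for the magnetic functional is not established anywhere in the paper; you would have to import and reprove the full Esteban--Lions concentration-compactness analysis for a sequence of varying functionals.

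The paper avoids this crux entirely, and you can too by a small change of competitors. Instead of near-minimizers, take for each \(n\) an exact groundstate \(v_n\) of \((\mathcal{R}_{V_n,A_n})\), which exists by Proposition~\ref{propositionExistenceLimiting}. The Nehari identity \(\int_{\R^N}\abs{D_{A_n}v_n}^2+V_n\abs{v_n}^2=\int_{\R^N}\abs{v_n}^p\) together with Lions' lemma and the diamagnetic inequality rules out vanishing; after the translation--gauge one passes to the limit \emph{in the weak formulation of the equation}, so the weak limit \(v_*\) is a nontrivial solution of \((\mathcal{R}_{V_*,A_*})\), hence lies on the Nehari manifold and satisfies \(\mathcal{I}_{V_*,A_*}(v_*)\ge\mathcal{E}(V_*,A_*)\). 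Weak lower semicontinuity of the quadratic form then gives \(\liminf_{n\to\infty}\mathcal{E}(V_n,A_n)\ge\bigl(\tfrac12-\tfrac1p\bigr)\int_{\R^N}\abs{D_{A_*}v_*}^2+V_*\abs{v_*}^2\ge\mathcal{E}(V_*,A_*)\), with no dichotomy analysis and no strong convergence: even if part of the mass escapes, the limit still dominates \(\mathcal{E}(V_*,A_*)\) because it solves the limiting equation. Arguing at the level of \(\mathcal{I}\) and the equation, rather than the quotient \(\mathcal{S}\) and normalized test functions, closes your outline; keep Lemma~\ref{lemmaCharacterizationE} only for the upper semicontinuity, where its parameter-independent class of competitors is exactly what is needed.
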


Without a magnetic field, proposition~\ref{propositionEcontinuous} is due to P.\thinspace{}Rabinowitz \cite{Rabinowitz1992}.
We mention to the reader who is interested in the proof of theorem~\ref{theoremGlobal} or theorem~\ref{theoremLocal}, that the latter proofs only rely on the upper semicontinuity which is the most easy part of the proof of proposition~\ref{propositionEcontinuous}.

\begin{proof}%
[Proof of proposition~\ref{propositionEcontinuous}]
By lemma~\ref{lemmaCharacterizationE}, the function \(\mathcal{E}\) is upper semicontinuous as by lemma~\ref{lemmaCharacterizationE} an infimum of continuous functions on \(\R^+ \times \Lin (\R^N; \bigwedge^1 \R^N)\).

Assume that the sequence \(((V_n, A_n))_{n \in \N}\) converges to \((V_*, A_*)\) in \(\R^+ \times \Lin (\R^N; \bigwedge^1 \R^N)\).
By proposition~\ref{propositionExistenceLimiting}, the problem \((\mathcal{R}_{V_n, A_n})\) 
has a weak solution \(v_n \in H^1_{V_n, A_n}\) such that \(\mathcal{I}_{V_n, A_n} (v_n) = \mathcal{E} (V_n, A_n)\).
Since \(v_n \in H^1_{V_n, A_n} (\R^N) \setminus \{0\}\) solves the problem \((\mathcal{R}_{V_n, A_n})\) we have
\begin{equation}
\label{eqLpH1}
  \int_{\R^N} \abs{D_{A_n} v_n}^2 + V_n \abs{v_n}^2 = \int_{\R^N} \abs{v_n}^p = \tfrac{2 p}{p - 2} \mathcal{I}_{V_n, A_n} (v_n),
\end{equation}
and thus by the upper semicontinuity proved above
\[
  \limsup_{n \to \infty} \int_{\R^N} \abs{D_{A_n} v_n}^2 + V_n \abs{v_n}^2
  \le \tfrac{2 p}{p - 2} \mathcal{I}_{V_*, A_*} (v_*).
\]

By an inequality of P.-L. Lions \cite{Lions1984CC2}*{lemma I.1} (see also \citelist{\cite{Willem1996}*{lemma 1.21}\cite{MorozVanSchaftingenGround}*{lemma 2.3}\cite{VanSchaftingen}} and by the diamagnetic inequality (lemma~\ref{lemmaDiamagnetic}), we have 
\[
\begin{split}
  \int_{\R^N} \abs{v_n}^{p}
  & \le C \Bigl(\sup_{a \in \R^N} \int_{B_{R_n} (a)} \abs{v_n}^{p}\Bigr)^{1 - \frac{2}{p}}  \int_{\R^N} \abs{D \abs{v_n}}^2 + \abs{v_n}^2\\
  & \le C \Bigl(\sup_{a \in \R^N} \int_{B_{R_n} (a)} \abs{v_n}^{p}\Bigr)^{1 - \frac{2}{p}}  \int_{\R^N} \abs{D_{A_n} v_n}^2 + V_n \abs{v_n}^2,
\end{split}
\]
where \(R_n \defeq 1/\sqrt{V_n}\). Since \(v_n \ne 0\), we deduce that 
\[
  C \Bigl(\sup_{a \in \R^N} \int_{B_{R_n} (a)} \abs{v_n}^{p}\Bigr)^{1 - \frac{2}{p}} \ge 1
\]
We deduce that there exists a sequence of points \((a_n)_{n \in \N}\) in \(\R^N\) such that 
\[
  \liminf_{n \to \infty} \int_{B_{R_n} (a_n)} \abs{u_n}^p > 0.
\]
Since \(V_* \ne 0\), \(\limsup_{n \to \infty} R_n \le R_* \defeq 1/\sqrt{V_*}\) and thus 
\[
  \liminf_{n \to \infty} \int_{B_{2 R_*} (a_n)} \abs{u_n}^p > 0
\]
We define now the function \(\Tilde{v}_n : \R^N \to \C\) for every \(y \in \R^N\) by 
\[
  \Tilde{v}_n (y) \defeq \e^{-\imath A_n (a_n)[y]} v_n (y - a_n).
\]
We have \(\Tilde{v}_n \in H^1_{V_n, A_n} (\R^N)\) and for every \(y \in \R^N\),
\[
  D_{A_n} \Tilde{v}_n (y) =\e^{-\imath A_n (a_n)[y]} (D_{A_n} v_n) (y - a_n)
\]
and 
\begin{equation}
\label{eqLiminfTildev}
  \liminf_{n \to \infty} \int_{B_{2 R^*}} \abs{\Tilde{v}_n}^p > 0.
\end{equation}
Up to a subsequence, we can assume that the sequence \((\Tilde{v}_n)_{n \in \N}\) converges weakly to some function \(v_*\) in \(L^2 (\R^N; \C)\) and the sequence \((D_{A_n} \Tilde{v}_n)_{n \in \N}\) converges weakly to \(g_*\) in \(L^2 (\R^N; \Lin (\R^N; \C))\).
Since the sequence \((A_n)_{n \in \N}\) converges to \(A_*\), the sequence \((D \Tilde{v}_n)_{n \in \N}\) converges weakly to \(g_* - \imath A_* v_*\) and thus \(g_* = D_{A_*} v_*\).
By Rellich's theorem, the sequence \((\Tilde{v}_n)_{n \in \N}\) converges strongly to \(v_*\) in \(L^p_\mathrm{loc} (\R^N)\), and thus by the strict inequality \eqref{eqLiminfTildev} the function \(v_*\) is a nontrivial solution to \eqref{problemLimit} and
\[
  \mathcal{I}_{V_*, A_*} (v_*) = \bigl(\tfrac{1}{2} - \tfrac{1}{p}\bigr) \int_{\R^N} \abs{D_{A_*} v_*}^2 + V_* \abs{v_*}^2. 
\]
Finally, we conclude by lower semicontinuity of the norm under the convergence in \(L^2\) that 
\begin{equation*}
\begin{split}
  \liminf_{n \to \infty} \mathcal{E} (V_n, A_n)
  &= \liminf_{n \to \infty} \mathcal{I} (V_n, A_n)
  = \liminf_{n \to \infty} \bigl(\tfrac{1}{2} - \tfrac{1}{p}\bigr)\int_{\R^N} \abs{D_{A_n} v_n}^2 + V_n \abs{v_n}^2\\
  &= \liminf_{n \to \infty} \bigl(\tfrac{1}{2} - \tfrac{1}{p}\bigr)\int_{\R^N} \abs{D_{A_n} \Tilde{v}_n}^2 + V_n \abs{\Tilde{v}_n}^2\\
  &\ge \bigl(\tfrac{1}{2} - \tfrac{1}{p}\bigr) \int_{\R^N} \abs{D_{A_*} v_*}^2 + V_* \abs{v_*}^2
  \ge \mathcal{E} (V_*, A_*).
\end{split}
\end{equation*}
Since the function \(\mathcal{E}\) is upper and lower semicontinuous, it is continuous.
\end{proof}

Finally minimal points of the concentration functions can be characterized as points at which the Lorentz force \eqref{eqLorentzDipole} with the quantum mechanical charge and magnetic moment vanishes.

\begin{proposition}
\label{propositionLimitingLorentzDipole}
Assume that \(V \in C^1 (\Omega; \R)\) and \(A \in C^2 (\Omega;\bigwedge^1 \R^N)\).
If \(x_* \in \Omega\) is a local minimum point of \(\mathcal{E} \defeq \mathcal{C} (V, A)\) and 
\(v_* \in H^1_{D A (x_*)} (\R^N)\) achieves \(\mathcal{C} (V (x_*), A (x_*))\),
then
\[
  q\, dV (x_*) + \dualprod{d A}{\mu} (x_*)=0,
\]
where the charge \(q \in \R\) is defined by
\[ 
  q \defeq \int_{\R^N} \abs{v_*}^2
\]
and the magnetic moment \(\mu \in \bigwedge^2 \R^N\) is the bivector defined by
\[
  \mu \defeq \frac{1}{2}  \int_{\R^N} \scalprod{y \wedge \nabla_{D A_* (x)} v_* (y)}{\imath v_*(y)}.
\] 
\end{proposition}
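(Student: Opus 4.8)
The plan is to exploit the fact that $x_*$ is a critical point of the concentration function $\mathcal{C}(x) = \mathcal{E}(V(x), A(x))$, so that $d\mathcal{C}(x_*) = 0$, and then to compute $d\mathcal{C}(x_*)$ explicitly by differentiating through the minimization. The first step is to reduce $\mathcal{E}$ to the Sobolev quotient: by proposition~\ref{propositionExistenceLimiting} we have $\mathcal{E}(V_*, A_*) = (\tfrac12 - \tfrac1p) \inf_v \mathcal{S}_{V_*, A_*}(v)^{p/(p-2)}$, so up to a fixed power and constant it suffices to differentiate $m(x) \defeq \inf_v \mathcal{S}_{V(x), A(x)}(v)$ at $x_*$. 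Since $v_*$ achieves the infimum at $x_*$ and (after the usual normalization) solves \eqref{problemLimit}, one has $\mathcal{S}_{V(x_*), A(x_*)}(v_*) = m(x_*)$, and the infimum is attained; the envelope theorem (Danskin's theorem) then gives that $m$ is differentiable at $x_*$ precisely in the direction of the partial derivative of $\mathcal{S}_{V(x), A(x)}(v_*)$ with respect to $x$ evaluated along the \emph{fixed} minimizer $v_*$. Concretely, $m(x) \le \mathcal{S}_{V(x), A(x)}(v_*)$ with equality at $x = x_*$, so $x \mapsto \mathcal{S}_{V(x), A(x)}(v_*)$ also has a critical point at $x_*$; hence it suffices to show that the $x$-derivative of this explicit function vanishes and to identify it with $q\,dV(x_*) + \dualprod{dA}{\mu}(x_*)$ up to a positive multiplicative constant.

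The second step is that explicit differentiation. Since $v_*$ solves the limiting equation, the denominator $\int |v_*|^p$ and the numerator $\int |D_{A(x_*)} v_*|^2 + V(x_*)|v_*|^2$ agree, so differentiating the quotient reduces (up to the positive factor $(\int|v_*|^p)^{2/p}$) to differentiating only the numerator in $x$, the denominator being $x$-independent. For the $V$ part, $\partial_x \bigl(\int V(x)|v_*|^2\bigr) = \bigl(\int |v_*|^2\bigr) dV(x_*) = q\, dV(x_*)$. For the magnetic part, one writes $\int |D_{A(x)} v_*|^2 = \int |Dv_* + \imath v_* A(x)|^2 = \int |Dv_*|^2 + 2\,\re \int \scalprod{Dv_*}{\imath v_* A(x)} + \int |A(x)|^2|v_*|^2$; differentiating in $x$ and evaluating at $x_*$ yields a term linear in $DA(x_*)$ (from the first-order Taylor expansion $A(x_* + h) = A(x_*) + DA(x_*)[h,\cdot] + o(|h|)$ after a gauge normalization) that, after an integration by parts using that $v_*$ satisfies \eqref{problemLimit}, organizes exactly into the pairing $\dualprod{dA}{\mu}(x_*)$ with $\mu = \int \scalprod{(\nabla_{DA_*(x)}v_*(y))\wedge y}{v_*(y)}$. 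Combining, $0 = d\mathcal{C}(x_*)$ becomes $c\,(q\,dV(x_*) + \dualprod{dA}{\mu}(x_*)) = 0$ for a positive constant $c$, giving the claim.

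The main obstacle, and the point that needs genuine care rather than routine computation, is \emph{gauge-theoretic bookkeeping in the magnetic term}. The quantity $\mathcal{C}(x)$ depends only on $DA(x)$, not on $A(x)$ itself, by gauge invariance; but the naive expression $\int |Dv_* + \imath v_* A(x)|^2$ is \emph{not} gauge-invariant, so one must first fix a convenient gauge near $x_*$ — for instance, the linear gauge $A(x_* + h) \approx \tfrac12 DA(x_*)[h, \cdot]$ plus the constant $A(x_*)$ which can be absorbed into $v_*$ via the phase $e^{-\imath A(x_*)[y]}$ exactly as in the proof of proposition~\ref{propositionEcontinuous} — before differentiating, and then check that the resulting derivative is gauge-independent and matches the coordinate-free pairing $\dualprod{dA}{\mu}$. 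One also needs enough regularity and decay on $v_*$ (exponential decay of $|v_*|$ and its derivative, which follows from \eqref{problemLimit}, Kato's inequality and the De Giorgi--Nash--Moser theory together with $V(x_*) > 0$) to justify differentiating under the integral sign and the integration by parts; the hypotheses $V \in C^1$, $A \in C^2$ are exactly what make the map $x \mapsto \mathcal{S}_{V(x),A(x)}(v_*)$ differentiable. Once the gauge is fixed and the decay is in hand, the identification of the derivative with the Lorentz expression is a bounded computation using only the Euler--Lagrange equation for $v_*$.
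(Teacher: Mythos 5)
Your global strategy coincides with the paper's: reduce to the Sobolev quotient via proposition~\ref{propositionExistenceLimiting}, note that the fixed minimizer \(v_*\) is admissible for all nearby limiting problems so that the map \(x \mapsto \mathcal{S}_{\cdot}(v_*)\) dominates (a monotone transform of) \(\mathcal{C}\) and touches it at the local minimum \(x_*\), hence has vanishing derivative there, and then compute that derivative explicitly. (Your appeal to Danskin's theorem and to \(d\mathcal{C}(x_*)=0\) is neither needed nor justified --- minimizers of the limiting problem may be non-unique and \(\mathcal{C}\) is not known to be differentiable --- but your fallback majorant argument, combined with the local minimality of \(\mathcal{C}\) at \(x_*\), is exactly the paper's argument.)

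The genuine gap is in the magnetic term, precisely the point you single out as ``the main obstacle'' without resolving it. The limiting problem attached to a concentration point \(x\) has as magnetic potential the \emph{linear} form \(y \mapsto DA(x)[y]\): the concentration function is \(\mathcal{C}(x) = \mathcal{E}\bigl(V(x), DA(x)\bigr)\), not \(\mathcal{E}\bigl(V(x), A(x)\bigr)\); a constant potential \(A(x)\) is pure gauge and carries no magnetic field at all. The function to differentiate is therefore \(x \mapsto \int_{\R^N} \abs{D_{DA(x)} v_*}^2 + V(x)\abs{v_*}^2\), whose \(x\)-derivative produces a term involving \(D^2 A(x_*)\) --- this is exactly why \(A \in C^2\) is assumed --- and an integration by parts discarding the symmetric part leaves \(D(dA)(x_*)\), the derivative of the magnetic field, paired against the fixed moment \(\mu\); that is the Lorentz term \(d\dualprod{dA}{\mu}(x_*)\) (the pairing in the statement must be read with this outer differential, as the paper's proof makes clear, since \(q\,dV(x_*)\) is a covector). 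Your computation instead Taylor-expands \(A\) to first order and announces ``a term linear in \(DA(x_*)\)'': first derivatives of \(A\) can never produce \(D(dA)(x_*)\), so the claimed identification with the dipole pairing cannot work as described --- with your parametrization the magnetic content of the problem disappears. A further point your sketch misses is the justification for differentiating under the integral in the correct setup: since \(DA(x)[y]\) grows linearly in \(y\), one needs the moment bound \(\int_{\R^N}\abs{v_*(y)}^2\abs{y}^2\dif y < \infty\) for the groundstate (which the paper obtains from regularity theory), not merely membership of \(v_*\) in the magnetic Sobolev space at \(x_*\).
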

\begin{proof}
%
%
%
%
%

First, we observe that by regularity theory for groundstates of \eqref{problemLimit}, the groundstate \(v_* \in H^1_{V (x_*), D A (x_*)} (\R^N)\) satisfies
\[
  \int_{\R^N} \abs{v_* (y)}^2 \abs{y}^2 \dif y < \infty
\]
We define for each \(x \in \Omega\) the function \(v_x : \R^N \to \C\) for every \(y \in \R^N\) by
\[
  v_x (y) = e^{\imath  (DA (x_*) [y, y] - DA (x)[y, y])/2} v_* (y).
\]
We compute that 
\[
  D_{D A (x)} v_x (y) = e^{\imath  (DA (x_*)[y, y] - DA (x) [y, y])/2} \bigl(D_{D A (x_*)} v_* + \imath (d A (x)[y] - d A (x_*) [y])v_* (y)/2\bigr)
\]
and thus for every \(x \in \Omega\), \(v_* \in H^1_{V (x), D A (x)} (\R^N)\).

%
%
%
Since \(v_*\) achieves the minimum of the Sobolev quotient \(\mathcal{S}_{V_* (x), d A_* (x)/2}\) and by definition of the limiting action \(\mathcal{E}\),
\[
  \frac{\dif}{\dif x}\mathcal{S}_{V_* (x), d A_* (x)/2} (v_*) \Big\vert_{x = x_*} = 0.
\]
Therefore, by a direct computation
\[
\begin{split}
  0 & = \frac{\dif}{\dif x} \int_{\R^N} \abs{D_{D A (x_*)} v_*}^2 + V (x) \abs{v_*}^2 \Big\vert_{x = x_*}
  =\int_{\R^N} \scalprod{D_{D A (x_*)}v_*}{\imath D (d A) (x_*) v_*} + \int_{\R^N} D V (x_*) \abs{v_*}^2 \\
  &=\int_{\R^N} D(d A) (x_*)[\imath y v_* (y), \nabla_{D A (x_*)} v_* (y)]\dif y+ \int_{\R^N} D V (x_*) \abs{v_*}^2\\
  &=\frac{1}{2} \int_{\R^N}  \bigl(D (d A) (x_*)[\imath y v_* (y), \nabla_{D A (x_*)} v_* (y)]-D (d A) (x_*)[\nabla_{D A (x_*)} v_* (y), \imath y v_* (y)]\bigr)\dif y\\
  & \hspace{32em}+ \int_{\R^N} D V (x_*) \abs{v_*}^2\\
  &=D \dualprod{d A}{\mu} (x_*) + q D V (x_*).
\end{split}
\]
\qedhere
\end{proof}

\section{Asymptotics of solutions}

\label{sectionAsymptotics}

\subsection{Upper bound on the action}
We begin the study of asymptotics by giving a sharp upper bound on the critical level \(c_\varepsilon\).

\begin{proposition}[Upper bound on the action of solutions]
\label{propositionUpperBound}
One has 
\[
 \limsup_{\varepsilon \to 0} \varepsilon^{-N} c_\varepsilon
 \le \inf_{\Lambda} \mathcal{C}.
\]
\end{proposition}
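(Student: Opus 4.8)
The plan is to construct, for each $\varepsilon > 0$, a suitable test path $\gamma \in \Gamma_\varepsilon$ built from a ground state of a limiting problem associated with a well-chosen point of $\Lambda$, and to estimate $\max_{t} \mathcal{G}_\varepsilon(\gamma(t))$. Fix $\delta > 0$; since $\mathcal{C}$ is continuous (or merely by the definition of $\inf_\Lambda \mathcal{C}$), pick $x_\delta$ in the interior of $\Lambda$ with $\mathcal{C}(x_\delta) \le \inf_\Lambda \mathcal{C} + \delta$. By lemma~\ref{lemmaCharacterizationE}, the energy $\mathcal{C}(x_\delta) = \mathcal{E}(V(x_\delta), DA(x_\delta))$ can be approximated from above by $\sup_{t>0} \mathcal{I}_{V(x_\delta), DA(x_\delta)}(tw)$ for some fixed $w \in C^1_c(\R^N;\C) \setminus \{0\}$; this is the key point, because $w$ can be chosen \emph{independently of $\varepsilon$} and, after a gauge change, independently of the particular affine representative of the magnetic potential.

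Next I would perform the standard concentration ansatz: let $B_r(x_\delta) \subset \Lambda$ contain $\supp w(\tfrac{\cdot - x_\delta}{\varepsilon})$ for $\varepsilon$ small, and set
\[
  u_{\varepsilon, t}(x) \defeq t \, \e^{\imath \theta_\varepsilon(x)} \, w\Bigl(\frac{x - x_\delta}{\varepsilon}\Bigr),
\]
where $\theta_\varepsilon$ is the quadratic phase that gauges $A/\varepsilon^2$ to its linearization $DA(x_\delta)$ at $x_\delta$ (i.e. $D\theta_\varepsilon(x) = (A(x_\delta) + DA(x_\delta)[x-x_\delta])/\varepsilon^2$), so that on the support, after the change of variables $x = x_\delta + \varepsilon y$,
\[
  \varepsilon^2 \abs{D_{A/\varepsilon^2} u_{\varepsilon,t}}^2 = t^2 \bigl(\varepsilon^2 \abs{D w(y)}^2 + 2\varepsilon \scalprod{Dw(y)}{\imath\, DA(x_\delta)[y]\, w(y)} + \abs{DA(x_\delta)[y]\, w(y)}^2\bigr) + o(1),
\]
uniformly in $t$ on bounded sets. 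Using $A \in C^1(\Bar\Lambda)$, $V$ continuous at $x_\delta$, and $\supp w$ compact, a Taylor expansion gives
\[
  \varepsilon^{-N} \mathcal{G}_\varepsilon(u_{\varepsilon,t}) = \varepsilon^{-N}\mathcal{F}_\varepsilon(u_{\varepsilon,t}) = \mathcal{I}_{V(x_\delta), DA(x_\delta)}(tw) + o(1) \qquad (\varepsilon \to 0),
\]
uniformly for $t$ in compact subsets of $[0,\infty)$, where we used that $u_{\varepsilon,t}$ is supported in $\Lambda$, so the penalized nonlinearity $G_\varepsilon$ coincides with $\tfrac1p\abs{\cdot}^p$ there. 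Since $p > 2$, the map $t \mapsto \mathcal{I}_{V(x_\delta), DA(x_\delta)}(tw)$ is negative for $t$ large; fix $T$ with $\mathcal{I}_{V(x_\delta), DA(x_\delta)}(Tw) < -1$, define $\gamma_\varepsilon(s) \defeq u_{\varepsilon, sT}$ for $s \in [0,1]$, and note $\gamma_\varepsilon \in \Gamma_\varepsilon$ for $\varepsilon$ small since $\mathcal{G}_\varepsilon(\gamma_\varepsilon(1)) = \mathcal{F}_\varepsilon(u_{\varepsilon,T}) < 0$. Then
\[
  \varepsilon^{-N} c_\varepsilon \le \varepsilon^{-N}\max_{s \in [0,1]} \mathcal{G}_\varepsilon(\gamma_\varepsilon(s)) = \max_{t \in [0,T]} \mathcal{I}_{V(x_\delta), DA(x_\delta)}(tw) + o(1) \le \sup_{t > 0}\mathcal{I}_{V(x_\delta), DA(x_\delta)}(tw) + o(1).
\]
Letting $\varepsilon \to 0$ and then choosing $w$ so that the supremum is within $\delta$ of $\mathcal{C}(x_\delta)$ yields $\limsup_{\varepsilon\to 0}\varepsilon^{-N}c_\varepsilon \le \mathcal{C}(x_\delta) + \delta \le \inf_\Lambda \mathcal{C} + 2\delta$; since $\delta > 0$ is arbitrary, the claim follows.

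The main obstacle I anticipate is the uniformity in $t$ of the expansion $\varepsilon^{-N}\mathcal{G}_\varepsilon(u_{\varepsilon,t}) \to \mathcal{I}_{V(x_\delta),DA(x_\delta)}(tw)$ together with making sure $\gamma_\varepsilon$ genuinely lies in $\Gamma_\varepsilon$ (continuity in $s$ with values in $H^1_{0,V,A/\varepsilon^2}(\Omega)$, and the endpoint condition) — but because $w$ is smooth with fixed compact support and the phase $\theta_\varepsilon$ is explicit, the error terms are controlled by $\varepsilon \norm{Dw}_\infty$, $\varepsilon^2$ and the modulus of continuity of $V$ on $\supp w(\tfrac{\cdot-x_\delta}{\varepsilon})$, all of which vanish as $\varepsilon \to 0$ uniformly on $t \in [0,T]$. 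One has to be slightly careful that $u_{\varepsilon,t} \in H^1_{0,V,A/\varepsilon^2}(\Omega)$, which holds since it is $C^1_c(\Omega;\C)$ once $\varepsilon$ is small enough that $\supp w(\tfrac{\cdot - x_\delta}{\varepsilon}) \subset B_r(x_\delta) \subset \Lambda \subset \Omega$.
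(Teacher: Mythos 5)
Your overall scheme --- a rescaled, phase-shifted test function supported in \(\Lambda\), an expansion of \(\varepsilon^{-N}\mathcal{G}_\varepsilon(t u_\varepsilon)\) uniform in \(t\) on compact sets, a path \(s \mapsto u_{\varepsilon, sT}\) in \(\Gamma_\varepsilon\), and the characterization of \(\mathcal{E}\) by smooth test functions (lemma~\ref{lemmaCharacterizationE}) --- is exactly the paper's. But the central computation has a genuine flaw. First, the quadratic phase you invoke does not exist in general: a scalar \(\theta_\varepsilon\) with \(D\theta_\varepsilon(x) = \bigl(A(x_\delta) + DA(x_\delta)[x-x_\delta]\bigr)/\varepsilon^2\) requires the right-hand side to be a closed form, i.e.\ \(DA(x_\delta)\) symmetric, i.e.\ \(dA(x_\delta) = 0\). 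When the magnetic field does not vanish at \(x_\delta\), the linear part of \(A\) cannot be gauged away --- and this is precisely the mechanism by which \(DA(x_\delta)\) survives in the limit in the strong-field r\'egime: an ansatz that removed it would produce the limit \(\mathcal{I}_{V(x_\delta),0}\) instead of \(\mathcal{I}_{V(x_\delta),DA(x_\delta)}\). Second, your displayed expansion of \(\varepsilon^2\abs{D_{A/\varepsilon^2}u_{\varepsilon,t}}^2\) has inconsistent powers of \(\varepsilon\): integrating it against \(\varepsilon^{-N}\dif x\) would give in the limit only \(t^2\int_{\R^N}\abs{DA(x_\delta)[y]\,w(y)}^2\dif y\), which is not the kinetic term of \(\mathcal{I}_{V(x_\delta),DA(x_\delta)}(tw)\); so the asserted conclusion \(\varepsilon^{-N}\mathcal{G}_\varepsilon(u_{\varepsilon,t}) = \mathcal{I}_{V(x_\delta),DA(x_\delta)}(tw) + o(1)\) does not follow from what you wrote.

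The repair is the paper's choice of a \emph{linear} phase only: take
\(u_{\varepsilon,t}(x) \defeq t\, \e^{-\imath A(x_\delta)[x-x_\delta]/\varepsilon^2}\, w\bigl(\tfrac{x-x_\delta}{\varepsilon}\bigr)\),
which removes the constant term \(A(x_\delta)\) but keeps the linear part of \(A\). Then, with \(x = x_\delta + \varepsilon y\),
\[
 \varepsilon D_{A/\varepsilon^2}u_{\varepsilon,t}(x)
 = t\, \e^{-\imath A(x_\delta)[x-x_\delta]/\varepsilon^2}
 \Bigl(D_{DA(x_\delta)} w(y)
 + \imath\, w(y)\, \frac{A(x_\delta+\varepsilon y)-A(x_\delta)-DA(x_\delta)[\varepsilon y]}{\varepsilon}\Bigr),
\]
and the last term tends to \(0\) uniformly on the compact support of \(w\) because \(A\) is of class \(C^1\) near \(x_\delta\); hence
\(\varepsilon^{-N}\int_\Omega \varepsilon^2 \abs{D_{A/\varepsilon^2} u_{\varepsilon,t}}^2 \to t^2 \int_{\R^N}\abs{D_{DA(x_\delta)} w}^2\),
uniformly for \(t \in [0,T]\). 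With this correction, the rest of your argument --- the potential and nonlinear terms (using \(\supp u_{\varepsilon,t} \subset \Lambda\) so that \(G_\varepsilon\) is the pure power), the path \(\gamma_\varepsilon(s)=u_{\varepsilon,sT} \in \Gamma_\varepsilon\), and finally taking the infimum over \(w \in C^1_c(\R^N;\C)\setminus\{0\}\) via lemma~\ref{lemmaCharacterizationE} and over \(x_\delta \in \Lambda\) --- goes through exactly as in the paper.
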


The derivation of the upper bound is based on the idea of testing the functional against rescaled test functions \citelist{\cite{BonheureVanSchaftingen2008}*{lemma~12}\cite{MorozVanSchaftingen2010}*{lemma~4.1}} 
with the phase-shift already appearing for weak magnetic fields \citelist{\cite{Cingolani2003}*{lemma~3.2}\cite{CingolaniSecchi2005}*{(27)}}.
The strong magnetic field r\'egime makes the computation more delicate.

\begin{proof}%
[Proof of proposition~\ref{propositionUpperBound}]
Given a point \(x_* \in \Lambda\) and a test function \(v_* \in C^1_c (\R^N;\C)\), we define the function \(u_{\varepsilon} : \Omega \to \R\) for every \(x \in \Omega\) by
\[
 u_{\varepsilon} (x) \defeq \e^{-\imath  A (x_*)[x - x_*]/\varepsilon^2} v_* \Bigl(\frac{x - x_*}{\varepsilon} \Bigr).
\]
Since \(x_*\) is an interior point of \(\Omega\) and the function \(v\) has compact support, for \(\varepsilon > 0\) sufficiently small, \(u_\varepsilon \in C^1_c (\Omega)\).
Moreover, for every \(x \in \Omega\),
\[
 \varepsilon D_{A/\varepsilon^2} u_{ \varepsilon} (x)
 = D_{DA(x_*)} v_* \bigl(\tfrac{x - x_*}{\varepsilon} \bigr)
 + \frac{\imath}{\varepsilon}  v_* \bigl(\tfrac{x - x_*}{\varepsilon} \bigr) \bigl(A (x) - D A (x_*)[x - x_*] - A (x_*)\bigr).
\]
Therefore,
\[
 \varepsilon^{-N} \int_{\Omega} \varepsilon^2 \abs{D_{A/\varepsilon^2} u_{\varepsilon}}^2
 = \int_{\R^N} \Bigabs{D_{DA(x_*)} v_* (y)
 + \imath v_* (y) \tfrac{A (x_* + \varepsilon y) - D A (x_*)[\varepsilon y] - A (x_*)}{\varepsilon}}^2 \dif y.
\]
Since the differential form \(A\) is differentiable at the point \(x_*\), by Lebesgue's dominated convergence theorem,
\[
 \lim_{\varepsilon \to 0} \varepsilon^{-N} \int_{\Omega} \varepsilon^2 \abs{D_{A/\varepsilon^2} u_{\varepsilon}}^2
 = \int_{\R^N} \bigabs{D_{DA(x_*)} v_*}^2.
\]

Similarly, since the potential \(V\) is continuous at \(x_*\) and since \(v_* \in L^2 (\R^N)\), 
\[
  \lim_{\varepsilon \to 0} \varepsilon^{-N} \int_{\Omega} V \abs{u_{\varepsilon}}^2
  =\int_{\R^N} V (x_*) \abs{v_*}^2.
\]
Finally, since \(x_*\) is an interior point of \(\Lambda\), if \(\varepsilon > 0\) is sufficiently small, \(\supp u_\varepsilon \subset \Lambda\) and 
\[
  \varepsilon^{-N} \int_{\Omega} G_\varepsilon (t u_{\varepsilon})
  =\frac{t^p}{p} \int_{\R^N} \abs{v_*}^p.
\]
As a consequence of the previous limits, we have that 
\[
\limsup_{\varepsilon \to 0} \sup_{t > 0} \varepsilon^{-N} \mathcal{G}_\varepsilon (t u_\varepsilon) 
\le \sup_{t > 0} \mathcal{I}_{V (x_*), D A (x_*)} (t v).
\]
Therefore,
\[
 \limsup_{\varepsilon \to 0} \varepsilon^{-N} c_\varepsilon \le \sup_{t > 0} \mathcal{I}_{V (x_*), D A (x_*)} (t v)
\]
We conclude by taking the infimum over \(v \in C^1_c (\R^N) \setminus \{0\}\) and applying lemma~\ref{lemmaCharacterizationE}.
\end{proof}

\subsection{Lower bound on the action}
We now study the asymptotic behaviour of sequences of solutions in the semiclassical r\'egime and establish a lower bound on the functional.

\begin{proposition}[Lower bound on the action of solutions]
\label{propositionLowerBound}
Let \((\varepsilon_n)_{n \in \N} \) be a sequence in \(\R^+\) that converges to \(0\), let \((u_{n})_{n \in \N}\) be a sequence of solutions of \(\mathcal{Q}_{\varepsilon_n}\) and let \((x^i_n)_{n \in \N} \subset \R^N\), \(1\leq i \leq M\), be sequences of points in \(\Omega\) such that \(x^i_n \to x_*^i \in \Lambda\) as \(n \to \infty\).
If for every \(i \in \{1, \dotsc, M\}\), \(V(x_*^i) > 0\) and
\begin{equation*}
 \liminf_{n\to\infty} \frac{1}{\varepsilon_n} \int_{B_{\varepsilon_n \rho} (x^i_n)} \abs{u_{n}}^p > 0\:,
\end{equation*}
and if for every \(i,j \in \{ 1, \dots, M \}\) such that \(i\neq j\),
\begin{equation*}
 \lim_{n\to\infty} \frac{\abs{x^i_n-x^j_n}}{\varepsilon_n} = +\infty\:,
\end{equation*}
then 
\begin{equation*}
 \liminf_{n\to\infty} \varepsilon_n^{-N} \mathcal{G}_{\varepsilon_n}(u_{\varepsilon_n}) \geq 
\sum_{i=1}^M \mathcal{C}(x_*^i)\:.
\end{equation*}
\end{proposition}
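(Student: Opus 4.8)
The plan is to blow up the solutions around each point $x^i_n$, extract a nontrivial solution of the limiting equation \eqref{problemLimit} as a weak limit, and then recombine the $M$ pieces by exploiting the separation of the points. Passing to a subsequence, I may assume that $\varepsilon_n^{-N}\mathcal{G}_{\varepsilon_n}(u_n)$ converges to $\liminf_{n\to\infty}\varepsilon_n^{-N}\mathcal{G}_{\varepsilon_n}(u_n)$ and that this value is finite, since otherwise there is nothing to prove. Since each $u_n$ is a critical point of $\mathcal{G}_{\varepsilon_n}$, one has $\dualprod{\mathcal{G}_{\varepsilon_n}'(u_n)}{u_n}=0$, so the coerciveness lemma~\ref{lemmaCoerciveness} gives, for $n$ large,
\[
  \Bigl(\tfrac{1}{2}-\tfrac{1}{p}\Bigr)(1-\mu)\int_\Omega \varepsilon_n^2\abs{D_{A/\varepsilon_n^2}u_n}^2+V\abs{u_n}^2\le\mathcal{G}_{\varepsilon_n}(u_n),
\]
and hence $\varepsilon_n^{-N}\int_\Omega\varepsilon_n^2\abs{D_{A/\varepsilon_n^2}u_n}^2+V\abs{u_n}^2$ is bounded.

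Fix $i\in\{1,\dotsc,M\}$. I would set, for $y\in(\Omega-x^i_n)/\varepsilon_n$,
\[
  v^i_n(y)\defeq\e^{\imath A(x^i_n)[y]/\varepsilon_n}\,u_n(x^i_n+\varepsilon_n y),\qquad
  A^i_n(y)\defeq\tfrac{1}{\varepsilon_n}\bigl(A(x^i_n+\varepsilon_n y)-A(x^i_n)\bigr),
\]
which is designed so that $D_{A^i_n}v^i_n(y)=\e^{\imath A(x^i_n)[y]/\varepsilon_n}\,\varepsilon_n\,(D_{A/\varepsilon_n^2}u_n)(x^i_n+\varepsilon_n y)$ and hence, for every $R>0$,
\[
  \int_{B_R}\abs{D_{A^i_n}v^i_n}^2+V(x^i_n+\varepsilon_n\cdot)\abs{v^i_n}^2=\varepsilon_n^{-N}\int_{B_{\varepsilon_n R}(x^i_n)}\varepsilon_n^2\abs{D_{A/\varepsilon_n^2}u_n}^2+V\abs{u_n}^2.
\]
Since $A\in C^1$, the forms $A^i_n$ converge to $DA(x^i_*)$ locally uniformly, and since $V$ is continuous at $x^i_*$ with $V(x^i_*)>0$, the functions $V(x^i_n+\varepsilon_n\cdot)$ are bounded below by a positive constant on each fixed ball once $n$ is large. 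With the bound from the previous step this yields a uniform $H^1(B_R)$ bound on $v^i_n$; after a further subsequence and a diagonal argument over $R\in\N$ and over $i$, I get $v^i_n\weakto v^i_*$ in $H^1_\mathrm{loc}(\R^N)$, strongly in $L^2_\mathrm{loc}(\R^N)$ and --- because $2<p<\tfrac{2N}{N-2}$ --- strongly in $L^p_\mathrm{loc}(\R^N)$, while $D_{A^i_n}v^i_n\weakto D_{DA(x^i_*)}v^i_*$ in $L^2_\mathrm{loc}(\R^N)$.

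Because $x^i_*\in\Lambda$ and $\Lambda$ is open, for $n$ large and $y$ in any fixed ball one has $x^i_n+\varepsilon_n y\in\Lambda$, where $g_{\varepsilon_n}(x,s)=\abs{s}^{p-2}s$; rescaling the weak formulation of \eqref{problemPenalized} therefore shows that $v^i_n$ solves $-\Delta_{A^i_n}v^i_n+V(x^i_n+\varepsilon_n\cdot)v^i_n=\abs{v^i_n}^{p-2}v^i_n$ tested against any function with fixed compact support. Passing to the limit with the convergences above, $v^i_*\in H^1_{V(x^i_*),DA(x^i_*)}(\R^N)$ solves the limiting equation \eqref{problemLimit} with $(V_*,A_*)=(V(x^i_*),DA(x^i_*))$; moreover, rescaling the nonvanishing hypothesis gives $\liminf_{n\to\infty}\int_{B_\rho}\abs{v^i_n}^p>0$, so by the strong $L^p_\mathrm{loc}$ convergence $v^i_*\neq 0$. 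Thus $v^i_*$ lies on the Nehari manifold of $\mathcal{I}_{V(x^i_*),DA(x^i_*)}$, so testing its equation against itself and using \eqref{equationDefinitionE},
\[
  \Bigl(\tfrac{1}{2}-\tfrac{1}{p}\Bigr)\int_{\R^N}\abs{v^i_*}^p=\mathcal{I}_{V(x^i_*),DA(x^i_*)}(v^i_*)\ge\mathcal{E}\bigl(V(x^i_*),DA(x^i_*)\bigr)=\mathcal{C}(x^i_*).
\]

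It then remains to bound $\mathcal{G}_{\varepsilon_n}(u_n)$ from below by the total localized $L^p$-mass. Using once more that $u_n$ is critical, $\mathcal{G}_{\varepsilon_n}(u_n)=\mathcal{G}_{\varepsilon_n}(u_n)-\tfrac{1}{2}\dualprod{\mathcal{G}_{\varepsilon_n}'(u_n)}{u_n}=\int_\Omega\bigl(\tfrac{1}{2}\scalprod{u_n}{g_{\varepsilon_n}(u_n)}-G_{\varepsilon_n}(u_n)\bigr)$; the integrand equals $\bigl(\tfrac{1}{2}-\tfrac{1}{p}\bigr)\abs{u_n}^p$ on $\Lambda$ and is nonnegative on $\Omega\setminus\Lambda$ by $(g_4)$, so $\mathcal{G}_{\varepsilon_n}(u_n)\ge\bigl(\tfrac{1}{2}-\tfrac{1}{p}\bigr)\int_\Lambda\abs{u_n}^p$. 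For fixed $R$, the hypothesis $\abs{x^i_n-x^j_n}/\varepsilon_n\to\infty$ makes the balls $B_{\varepsilon_n R}(x^i_n)$ pairwise disjoint for $n$ large, and since $x^i_*\in\Lambda$ they are contained in $\Lambda$ for $n$ large; hence
\[
  \varepsilon_n^{-N}\int_\Lambda\abs{u_n}^p\ge\sum_{i=1}^M\varepsilon_n^{-N}\int_{B_{\varepsilon_n R}(x^i_n)}\abs{u_n}^p=\sum_{i=1}^M\int_{B_R}\abs{v^i_n}^p,
\]
whose right-hand side converges to $\sum_{i=1}^M\int_{B_R}\abs{v^i_*}^p$ as $n\to\infty$ by the strong $L^p(B_R)$ convergence. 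Combining these facts with the previous display and letting $R\to\infty$ by monotone convergence gives $\liminf_{n\to\infty}\varepsilon_n^{-N}\mathcal{G}_{\varepsilon_n}(u_n)\ge\bigl(\tfrac{1}{2}-\tfrac{1}{p}\bigr)\sum_{i=1}^M\int_{\R^N}\abs{v^i_*}^p\ge\sum_{i=1}^M\mathcal{C}(x^i_*)$, as claimed. I expect the main difficulty to be the third step --- identifying the blow-up limit $v^i_*$ as a genuine nontrivial solution of the limiting problem, which is exactly what converts the localized $L^p$-mass into the limiting energy $\mathcal{C}(x^i_*)$. It is here that one must use that $x^i_*$ lies in the good region $\Lambda$, so that near $x^i_n$ the penalized nonlinearity coincides with the true one, and that after the gauge change the rescaled potentials $A^i_n$ are locally uniformly convergent, in contrast with the uncontrolled coefficients of the rescaled equation on all of $\R^N$.
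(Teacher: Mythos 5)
Your proposal is correct for the statement as written, and the first half (blow-up) is essentially the paper's argument: the same gauge change \(v^i_n(y)=\e^{\imath A(x^i_n)[y]/\varepsilon_n}u_n(x^i_n+\varepsilon_n y)\), the same rescaled potential \(A^i_n\to DA(x^i_*)\) locally uniformly, Rellich plus a diagonal argument, identification of a nontrivial limit \(v^i_*\), and the density of \(C^1_c\) to put \(v^i_*\) on the Nehari manifold. Where you genuinely diverge is the recombination step. The paper bounds the full functional: it uses weak lower semicontinuity of the quadratic part on the balls \(B_{\varepsilon_n R}(x^i_n)\), and controls the exterior region by testing the equation with \(\psi_{n,R}^2 u_n\) for cutoffs \(\psi_{n,R}\), invoking \((g_2)\), \((g_4)\), \((g_5)\), and then sends the resulting tail errors to zero as \(R\to\infty\). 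You instead exploit criticality through the identity \(\mathcal{G}_{\varepsilon_n}(u_n)=\mathcal{G}_{\varepsilon_n}(u_n)-\tfrac12\dualprod{\mathcal{G}_{\varepsilon_n}'(u_n)}{u_n}=\int_\Omega\bigl(\tfrac12\scalprod{u_n}{\mathsuper{g}_{\varepsilon_n}(u_n)}-\mathsuper{G}_{\varepsilon_n}(u_n)\bigr)\), whose integrand is \(\bigl(\tfrac12-\tfrac1p\bigr)\abs{u_n}^p\) on \(\Lambda\) and nonnegative elsewhere by \((g_4)\); this reduces everything to \(L^p\)-mass on disjoint balls, Fatou in \(R\), and the Nehari identity for \(v^i_*\), so no cutoff machinery and no lower semicontinuity of the gradient term are needed. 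That is a cleaner and more elementary route to the same bound. Two remarks. First, your claim that \(v^i_*\) belongs to the global space \(H^1_{V(x^i_*),DA(x^i_*)}(\R^N)\) should be stated as a consequence of the \(R\)-uniform bound \(\int_{B_R}\abs{D_{A^i_n}v^i_n}^2+V^i_n\abs{v^i_n}^2\le C\) together with weak lower semicontinuity and monotone convergence; you have all the ingredients but do not say it, and it is needed both for the density/Nehari step and for \(v^i_*\in L^p(\R^N)\). Second, your simplifications (the nonlinearity is exactly \(\abs{s}^{p-2}s\) near \(x^i_*\), and the rescaled domains exhaust \(\R^N\)) use that \(x^i_*\in\Lambda\) is interior to \(\Omega\); the paper's heavier bookkeeping --- the weak-\(*\) limits \(\chi^i_*\) of \(\chi_\Lambda(x^i_n+\varepsilon_n\cdot)\) and the possible half-space limit domains \(\Omega^i_*\) --- is there so the argument survives when the concentration points only converge in \(\Bar\Lambda\), possibly near \(\partial\Omega\), which is how the proposition is actually invoked in the proof of proposition~\ref{propositionVanishingSmallBalls}. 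Your identity-based recombination would accommodate that extension (the \(\Lambda\)-part of the mass rescales to \(\int\chi^i_n\abs{v^i_n}^p\)), but you would then have to graft in the \(\chi^i_*\) and half-space devices rather than dismiss them.
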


\begin{proof}
Without loss of generality, we can assume that 
\[
 \liminf_{n\to\infty} \varepsilon_n^{-N} \mathcal{G}_{\varepsilon_n}(u_{\varepsilon_n})
 =\limsup_{n \to \infty} \varepsilon_n^{-N} \mathcal{G}_{\varepsilon_n}(u_{\varepsilon_n})<\infty.
\]
We define the rescaled functions \(v_n^i : (\Omega - x_n^i)/\varepsilon_n \to \C\) for every \(n \in\N\), \(i \in \{1, \dotsc, M\}\) and \(y \in (\Omega - x_n^i)/\varepsilon_n\) by
\[
v_n^i(y) \defeq \e^{\imath  A (x_n^i)[y-x_n^i]/\varepsilon_n} u_{n}(x_n^i + \varepsilon_n y). 
\]
We first observe that for every \(R > 0\) and \(n \in \N\),
\[
 \int_{B_R \cap (\Omega - x_n^i)/\varepsilon_n} V_n^i \abs{v_n^i}^2 = \frac{1}{\varepsilon_n^N} \int_{B_{\varepsilon_n R} (x_n^i) \cap \Omega} V \abs{u_n}^2
 \le \frac{1}{\varepsilon_n^N} \int_{\Omega} V \abs{u_n}^2,
\]
with the rescaled potential \(V_n^i : (\Omega - x_n^i)/\varepsilon_n \to \C\) defined for \(y \in (\Omega - x_n^i)/\varepsilon_n\) by
\[
 V_n^i (y) \defeq V(x_n + \varepsilon_n y).
\]
Since \(V\) is continuous, we deduce that for every \(R > 0\),
\[
 \limsup_{n \to \infty} \int_{B_R \cap (\Omega - x_n^i)/\varepsilon_n} V (x_*^i) \abs{v_n^i}^2 
 \le \limsup_{n \to \infty} \int_{\Omega} V \abs{u_n}^2
\]
Next we define the rescaled vector potential \(A_n^i : (\Omega - x_n^i)/\varepsilon_n \to \bigwedge^1 \R^N\) for each point \(y \in (\Omega - x_n^i)/\varepsilon_n\) by
\[
 A_n^i (y) = \frac{A (x_n^i + \varepsilon_n y) - A (x_n^i)}{\varepsilon_n},
\]
and we observe that for every \(R > 0\) and \(n \in \N\),
\[
 \int_{B_R \cap (\Omega - x_n^i)/\varepsilon_n} \abs{D_{A_n^i} v_n^i}^2 = \frac{1}{\varepsilon_n^N} \int_{B_{\varepsilon_n R} (x_n^i) \cap \Omega} \varepsilon_n^2 \abs{D_{A/\varepsilon_n^2} u_n} \le \frac{1}{\varepsilon_n^N} \int_{\Omega} \varepsilon_n^2 \abs{D_{A/\varepsilon_n^2} u_n}.
\]
By our assumption of continuous differentiability of the differential form \(A\), for every \(R > 0\), \(A_n^i - DA(x_n^i) \to D A (x_*^i)\) uniformly on \(B_R\). Therefore, the sequence \((u_n A_n)_{n \in \N}\) is bounded in \(L^2 (B_R)\). By the classical Rellich theorem, an extraction of subsequence and a diagonal argument, there exists a function \(v_*^i \in H^1_{\mathrm{loc}} (\R^N; \C)\) such that \(v_n^i \to v_*^i\) in \(L^q_{\mathrm{loc}} (\R^N)\) if \(\frac{1}{2}-\frac{1}{N} < \frac{1}{q}\) and  
\(
 D_{A_n^i} v_n^i \to D_{DA(x_*^i)} v_*^i
\)
in \(L^2 (\R^N; \Lin(\R^N;\C))\).
By lower semi-continuity of the norm, we have for every \(R>0\),
\[
  \int_{B_R} \abs{D_{D A (x_*^i)} v_*^i}^2 + V (x_*^i) \abs{v_*^i}^2
  \le \liminf_{n \to \infty} \frac{1}{\varepsilon_n^N} \int_{\Omega} \abs{D_{A/\varepsilon_n^2} u_n}^2 + V \abs{u_n}^2.
\]
By Lebesgue's monotone convergence theorem we deduce that \(v_*^i \in H^1_{D A(x_*), V (x_*)} (\R^N)\) and 
\[
  \int_{\R^N} \abs{D_{D A (x_*^i)} v_*^i}^2 + V (x_*^i) \abs{v_*^i}^2
  \le \liminf_{n \to \infty} \frac{1}{\varepsilon_n^N} \int_{\Omega} \abs{D_{A/\varepsilon_n^2} u_n}^2 + V \abs{u_n}^2.
\]
Since by assumption the domain \(\Omega\) is smooth in a neighbourhood of \(x_*^i\), there exists a set \(\Omega_*^i\) which is either a half-space or the whole space such that \(((\Omega - x_n^i)/R)_{n \in \N}\) converges to \(\Omega_*^i\) locally in Hausdorff distance, and we have \(v_*^i \in H^1_{D A(x_*), V (x_*)} (\Omega_*^i)\)

Given the functions \(\chi_n^i = \chi_{\Lambda} (x_n^i + \varepsilon_n \cdot)\), we observe that for every \(i \in \{1, \dotsc, M\}\), the sequence \((\chi_n^i)_{n \in \N}\) is bounded in \(L^\infty (\R^N)\). Since \(L^1 (\R^N)\) is separable, up to the extraction of a subsequence, there exists thus \(\chi_*^i\) in \(L^\infty (\R^N)\) such that \(\chi_{\Lambda} (x_n^i + \varepsilon_n \cdot) \to \chi_*^i\) weakly-\(\ast\) in \(L^\infty (\R^N)\).

In particular, for every \(\varphi \in C^1_c (\Omega_*^i;\C)\), for every \(n \in \N\) large enough so that \(\supp \varphi \subset (\Omega - x_n^i)/R\) we have, since \(u_n\) is a weak solution of the penalized problem \((\mathcal{Q}_{\varepsilon_n})\), 
\[
 \int_{\R^N} \scalprod{D_{A_n^i} v_n^i}{D_{D A_n^i} \varphi} + V_n^i \scalprod{v_n^i}{\varphi} = \int_{\R^N} \scalprod{g (x_n^i + \varepsilon_n \cdot, v_n^i)}{\varphi}.
\]
As the sequence \((D_{A_n^i} \varphi)_{n \in \N}\) converges to \(D_{D A(x_*^i)} \varphi\) strongly in \(L^2 (\Omega_*^i)\), we have
\[
  \int_{\R^N} \scalprod{D_{D A (x_*^i)} v_*^i}{D_{D A (x_*^i)} \varphi} + V (x_*^i) \scalprod{v_*^i}{\varphi} = \int_{\R^N} \chi_*^i \abs{v_*^i}^{p - 2} \scalprod{v_*^i}{\varphi}.
\]
Since by definition the set \(C^1_c (\R^n;\C)\) is dense in \(H^1_{D A (x_*^i), V (x_i^*)} (\Omega_*^i)\) and \(v_*^i \in H^1_{D A (x_*^i), V(x_*^i)} (\R^N)\), we have
\[
   \int_{\R^N} \abs{D A (x_*^i) v_*^i}^2 + V \abs{v_*^i}^2
 = \int_{\R^N} \chi_*^i \abs{v_*^i}^p,
\]
and thus for every \(t \ge 0\),
\[
\begin{split}
 \frac{1}{2} \int_{\R^N} \abs{D A (x_*^i) v_*^i}^2 + V \abs{v_*^i}^2
 - \frac{1}{p} \int_{\R^N} \chi_*^i \abs{v_*^i}^p
 &\ge \frac{1}{2} \int_{\R^N} \abs{D A (x_*^i) tv_*^i}^2 + V \abs{tv_*^i}^2
 - \frac{1}{p} \int_{\R^N} \chi_*^i \abs{tv_*^i}^p\\
 &\ge \frac{1}{2} \int_{\R^N} \abs{D A (x_*^i) tv_*^i}^2 + V \abs{tv_*^i}^2
 - \frac{1}{p} \int_{\R^N} \abs{tv_*^i}^p\\
 &= \mathcal{I}_{D A (x_*^i), V (x_*^i)} (t v_*^i).
 \end{split}
\]
We have thus proved that  
\[
 \frac{1}{2} \int_{\R^N} \abs{D A (x_*^i) v_*^i}^2 + V \abs{v_*^i}^2
 - \frac{1}{p} \int_{\R^N} \chi_*^i \abs{v}^p
 \ge \sup_{t > 0} \mathcal{I}_{D A (x_*^i), V (x_*^i)} (t v_*^i) \ge \mathcal{C} (x_*^i).
\]
We observe that for every \(R > 0\), by lower-semicontinuity 
\begin{multline}
\label{eqAsymptoticsSmallBalls}
 \liminf_{n \to \infty} \frac{1}{2}\int_{B_{\varepsilon R} (x_n^i)}  \bigl(\varepsilon_n^2 \abs{\nabla u_{\varepsilon_n}}^2 + V \abs{u_{\varepsilon_n}}^2 \bigr) -\int_{B_{\varepsilon R} (x_n^i)} \mathsuper{G}_{\varepsilon_n}(u_{\varepsilon_n}) \\
\ge \frac{1}{2} \int_{B_R} \abs{D A (x_*^i) v_*^i}^2 + V \abs{v_*^i}^2
 - \frac{1}{p} \int_{B_R} \chi_*^i \abs{v_*^i}^p\\
 \ge \mathcal{C} (x_*^i) -  \frac{1}{2}  \int_{\R^N \setminus B_R} \abs{D A (x_*^i) v_*^i}^2 + V \abs{v_*^i}^2.
\end{multline}

Next we proceed as in \cite{BonheureVanSchaftingen2008}*{lemma~15} by testing the equation with \(\psi_{n, R}{}^2 u\) where the function \(\psi_{n, R} : \R^N \to \R\) is defined by \(\psi_{n, R} (x)= \prod_{i = 1}^M \psi \bigl(\frac{x - x_n}{\varepsilon_n R}\bigr)\) with \(\psi \in C^\infty (\R^N)\), \(\psi = 1\) on \(\R^N \setminus B_2\) and \(\psi=0\) on \(B_{1}\).
We compute for every \(n \in \N\) and \(R > 0\) that 
\[
 \int_{\Omega} \psi_{n, R}{}^2 \abs{D_{A/\varepsilon_n^2} u_n}^2 + V \psi_{n, R}{}^2 \abs{u_n}^2
 = \int_{\Omega} \psi_{n, R}{}^2 \scalprod{g_{\varepsilon_n} (\cdot, u_n)}{u_n}
 - 2 \int_{\Omega} \varepsilon_n^2 \scalprod{u_n D \psi_{n, R}}{\psi_{n, R} D_{A/\varepsilon_n^2} u_n}
\]
Hence, by the properties $(g_4)$ and $(g_5)$, and then $(g_2)$, if for every \(i, j \in \{1, \dotsc, M\}\) such that \(i \ne j\), \(\abs{x_n^i - x_n^j} \ge 4 \varepsilon_n R\),
\begin{multline*}
\frac{1}{\varepsilon_n^N} \int_{\Omega \setminus \bigcup_{i=1}^M B_{\varepsilon_n R}(x^i_n)} \varepsilon_n^2\abs{D_{A/\varepsilon_n^2} u_{n}}^2 + V \abs{u_{n}}^2 -  G_{\varepsilon_n}(\cdot,u_{n}) \\
\ge \frac{1}{2} \int_{\Omega \setminus\bigcup_{i=1}^M B_{\varepsilon_n R}(x^i_n)}\bigl( \varepsilon_n^2 \abs{D_{A/\varepsilon_n^2} u_{n}}^2 + V \abs{u_{n}}^2 -  \scalprod{g_{\varepsilon_n}(\cdot,u_{n})}{u_n} \bigr)\\
\shoveright{\ge - \frac{1}{\varepsilon_n^N} \sum_{i=1}^M \int_{(B_{2 \varepsilon_nR} (x_n^i) \setminus B_{\varepsilon_n R} (x_n^i) ) \cap \Omega} \abs{u_n}^p 
 + 2 \varepsilon_n^2 \scalprod{u_n D \psi_{n, R} }{\psi_{n, R} D_{A/\varepsilon_n^2} u_n}}\\
 =  - \sum_{i=1}^M \int_{(B_{2 R} \setminus B_{R} ) \cap (\Omega -x_n^i)/\varepsilon} \abs{v_n^i}^p 
 + 2 \scalprod{v_n D \psi_{R} }{\psi_{n, R} D_{A_n^i} v_n^i},
\end{multline*}
with \(\psi_{R} (y)= \psi (y /R)\).
Since for every \(i, j \in \{1, \dotsc, M\}\) such that \(i \ne j\), \(\lim_{n \to \infty} \abs{x^i_n - x^j_n}/(\varepsilon_n R) = \infty\). By the weak convergence of the sequence \((D_{A_n^i} v_n^i)_{n \in \N}\) in \(L^2 (B_R)\) and the strong convergence of the sequence \((v_n^i)_{n \in \N}\) in \(L^p (B_R)\), we deduce that 
\begin{multline*}
 \liminf_{n \to \infty} \frac{1}{\varepsilon_n^N} \int_{\R^N\setminus \bigcup_{i=1}^M B_{\varepsilon_n R}(x^i_n)} \varepsilon_n^2\abs{D_{A/\varepsilon_n^2} u_{n}}^2 + V \abs{u_{n}}^2 - G_{\varepsilon_n}(\cdot,u_{n})\\
\ge
-\sum_{i = 1}^M \int_{B_{2 R} \setminus B_R} \abs{v_*^i}^p 
 + 2 \scalprod{v_*^i D \psi_R }{\psi_{R} D_{D A (x_*^i)} v_*^i}.
\end{multline*}
In view of \eqref{eqAsymptoticsSmallBalls}, we have thus proved for every \(R > 0\) that 
\begin{multline*}
 \liminf_{n \to \infty} \frac{1}{\varepsilon_n^N} \mathcal{G}_{\varepsilon_n} (u_n)
 \ge \sum_{i = 1}^M \Bigl(\mathcal{C} (x_*^i) -  \int_{\R^N \setminus B_R} \frac{1}{2} \bigl(\abs{D_{D A (x_*^i)} v_*^i}^2 + V \abs{v_*^i}^2\bigr) \\ 
 - \int_{B_{2 R} \setminus B_R} \abs{v_*^i}^p 
 + 2 \scalprod{v_*^i D \psi_R }{\psi_{R} D_{D A (x_*^i)} v_*^i}\Bigr).
\end{multline*}
The conclusion follows by taking \(R \to \infty\), since for every \(i \in \{1, \dotsc, M\}\), \(v_*^i \in H^1_{D A (x_*^i), V (x_*^i)} (\Omega_*^i) \subset L^p (\R^N; \C)\).
\end{proof}

\subsection{Asymptotics for a family of groundstates}
In this section we transform the lower bound on the action functional obtained above

\begin{proposition}[Asymptotics for a family of groundstates]
\label{propositionVanishingSmallBalls}
If \((u_{\varepsilon})_{\varepsilon>0}\) is a family of solutions to \eqref{problemPenalized} such that 
\[
 \limsup_{\varepsilon \to 0} \varepsilon^{-N}\mathcal{G}_{\varepsilon}(u_\varepsilon)\le \inf_{\Lambda} \mathcal{C},
\]
then there exists a family of points \((x_\varepsilon)_{\varepsilon > 0}\) in \(\Lambda\) such that 
\[
 \liminf_{\varepsilon \to 0} \mathcal{C} (x_\varepsilon) = \lim_{\varepsilon \to 0} \mathcal{G}_\varepsilon (u_\varepsilon) = \inf_{\Lambda} \mathcal{C}
\]
and
\[
 \lim_{\substack{\varepsilon\to 0 \\ R \to \infty}} \norm{u_{\varepsilon}}_{L^{\infty}(\Lambda \setminus 
 B(x_{\varepsilon},\varepsilon R))}   = 0.
\]
\end{proposition}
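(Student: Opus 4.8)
The plan is to combine the a~priori bounds coming from the coerciveness of the penalized functional with the lower bound of proposition~\ref{propositionLowerBound}, and then to promote the resulting concentration of energy to a uniform pointwise decay by applying Kato's inequality and De Giorgi--Nash--Moser theory to the modulus \(\abs{u_\varepsilon}\). For the \emph{a~priori bounds}, note that since each \(u_\varepsilon\) is a nontrivial solution of \eqref{problemPenalized} one has \(\dualprod{\mathcal{G}_\varepsilon'(u_\varepsilon)}{u_\varepsilon} = 0\), so by lemma~\ref{lemmaCoerciveness} and the hypothesis \(\varepsilon^{-N}\norm{u_\varepsilon}^2\) is bounded from above. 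Testing the equation with \(u_\varepsilon\) and using \((g_2)\), \((g_3)\) and lemma~\ref{lemmaPositivityMagnetic} gives \((1 - \mu)\norm{u_\varepsilon}^2 \le \int_\Lambda \abs{u_\varepsilon}^p\), and the rescaled Sobolev inequality (lemma~\ref{lemmaRescaledSobolev}) then yields \((1 - \mu)\norm{u_\varepsilon}^2 \le C\varepsilon^{-N(p/2 - 1)}\norm{u_\varepsilon}^p\); as \(u_\varepsilon \ne 0\) and \(p > 2\), this forces \(\varepsilon^{-N}\norm{u_\varepsilon}^2 \ge c > 0\), and hence \(\varepsilon^{-N}\int_\Lambda \abs{u_\varepsilon}^p \ge c' > 0\) and \(\varepsilon^{-N}\mathcal{G}_\varepsilon(u_\varepsilon) \ge c'' > 0\) as well.

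\emph{Locating the concentration point.} Applying to \(\chi_\Lambda u_\varepsilon\) a rescaled form of the concentration inequality of P.-L.\ Lions already used in the proof of proposition~\ref{propositionEcontinuous}, the upper bound on \(\varepsilon^{-N}\norm{u_\varepsilon}^2\) and the lower bound on \(\varepsilon^{-N}\int_\Lambda \abs{u_\varepsilon}^p\) force \(\sup_a \varepsilon^{-N}\int_{B_{\varepsilon \rho}(a)} \abs{u_\varepsilon}^p\) to be bounded below by a positive constant; since \(\chi_\Lambda u_\varepsilon\) is supported in \(\Lambda\), a near-maximising \(a\) lies within \(\varepsilon\rho\) of \(\Bar{\Lambda}\). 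Calling such a point \(x_\varepsilon\) and, if necessary, replacing it by a point of \(\Lambda\) at distance \(O(\varepsilon)\) (at the expense of enlarging \(\rho\)), we may assume \(x_\varepsilon \in \Lambda\) with \(\liminf_{\varepsilon \to 0}\varepsilon^{-N}\int_{B_{\varepsilon \rho}(x_\varepsilon)} \abs{u_\varepsilon}^p > 0\); in particular every accumulation point \(x_*\) of \((x_\varepsilon)\) belongs to \(\Bar{\Lambda}\).

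\emph{The energy identity.} The proof of proposition~\ref{propositionLowerBound} applies equally to an accumulation point \(x_* \in \Bar{\Lambda}\) (it only uses the \(C^1\) regularity of \(\partial\Omega\) near \(x_*\) and the positivity \(V(x_*) > 0\)); applied with the single sequence \((x_\varepsilon)\) it gives
\[
  \liminf_{\varepsilon \to 0}\varepsilon^{-N}\mathcal{G}_\varepsilon(u_\varepsilon) \ge \mathcal{C}(x_*) \ge \inf_{\Bar{\Lambda}}\mathcal{C} = \inf_\Lambda\mathcal{C}.
\]
Together with the hypothesis \(\limsup_{\varepsilon \to 0}\varepsilon^{-N}\mathcal{G}_\varepsilon(u_\varepsilon) \le \inf_\Lambda\mathcal{C}\), all these inequalities are equalities, so \(\lim_{\varepsilon \to 0}\varepsilon^{-N}\mathcal{G}_\varepsilon(u_\varepsilon) = \inf_\Lambda\mathcal{C}\) and \(\mathcal{C}(x_*) = \inf_\Lambda\mathcal{C}\) for every accumulation point of \((x_\varepsilon)\); by continuity of \(\mathcal{C}\) this gives \(\lim_{\varepsilon \to 0}\mathcal{C}(x_\varepsilon) = \inf_\Lambda\mathcal{C}\).

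\emph{Uniform decay, and the main difficulty.} Suppose the last assertion failed: there would exist \(\varepsilon_n \to 0\), \(R_n \to \infty\), \(z_n \in \Lambda \setminus B_{\varepsilon_n R_n}(x_{\varepsilon_n})\) and \(\delta > 0\) with \(\abs{u_{\varepsilon_n}(z_n)} \ge \delta\). Set \(w_n(y) \defeq \abs{u_{\varepsilon_n}(z_n + \varepsilon_n y)}\). By the diamagnetic inequality (lemma~\ref{lemmaDiamagnetic}), Kato's inequality \cite{Kato1972} applied to the rescaled equation, and \((g_2)\), \(w_n \ge 0\) is a distributional subsolution of \(-\Delta w_n \le w_n^{p - 2}\,w_n\) on \(B_\rho\); moreover \(\norm{w_n}_{H^1(B_\rho)}\) is bounded and \(w_n^{p - 2}\) is bounded in \(L^q(B_\rho)\) for some \(q > N/2\), thanks to the a~priori bounds, the positivity of \(V\) near \(\Bar{\Lambda}\), and the subcriticality \(\frac12 - \frac1N < \frac1p < \frac12\). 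The De Giorgi--Nash--Moser local boundedness estimate for subsolutions then yields \(\delta \le w_n(0) \le \sup_{B_{\rho/2}}w_n \le C\norm{w_n}_{L^p(B_\rho)}\) with \(C\) independent of \(n\), so \(\varepsilon_n^{-N}\int_{B_{\varepsilon_n \rho}(z_n)}\abs{u_{\varepsilon_n}}^p \ge c(\delta) > 0\). Thus \(x_{\varepsilon_n}\) and \(z_n\) are two sequences in \(\Lambda\) with \(\abs{z_n - x_{\varepsilon_n}}/\varepsilon_n \to \infty\), each carrying non-vanishing mass, so proposition~\ref{propositionLowerBound} gives \(\liminf_n \varepsilon_n^{-N}\mathcal{G}_{\varepsilon_n}(u_{\varepsilon_n}) \ge 2\inf_\Lambda\mathcal{C}\); as \(\inf_\Lambda\mathcal{C} \ge \inf_\Lambda\mathcal{E}(V, 0) > 0\), this contradicts the energy identity, so the uniform decay holds and the proof is complete. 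I expect this last step to be the chief obstacle: as emphasised in the introduction the coefficients of the rescaled magnetic operator are uncontrolled, so no interior estimate is available for \(u_\varepsilon\) itself and one has to pass through the scalar subsolution \(w_n\) via Kato's inequality, keeping the De Giorgi--Nash--Moser constant uniform by subcriticality; a secondary delicate point is the relocation of \(x_\varepsilon\) into \(\Lambda\) and the use of proposition~\ref{propositionLowerBound} at boundary accumulation points.
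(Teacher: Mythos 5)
Your overall architecture coincides with the paper's: coerciveness (lemma~\ref{lemmaCoerciveness}) bounds \(\varepsilon^{-N}\norm{u_\varepsilon}^2\); testing the equation with \(u_\varepsilon\) together with \((g_2)\), \((g_3)\) and lemma~\ref{lemmaPositivityMagnetic} gives a lower bound; the energy identity follows from proposition~\ref{propositionLowerBound} applied at limit points of \((x_\varepsilon)\) in \(\Bar{\Lambda}\) together with the hypothesis; and the uniform decay is obtained by the same two-bump contradiction, using \(\inf_{\Lambda}\mathcal{C} > 0\) and the translation between local \(L^\infty\) smallness and rescaled \(L^p\) mass. For that translation your linear De Giorgi--Nash--Moser variant (potential \(w_n^{p-2}\) bounded in \(L^{p/(p-2)}\) with \(p/(p-2) > N/2\) by subcriticality) is an acceptable substitute for the paper's lemma~\ref{lemmaEquivalenceLpLinfty} and proposition~\ref{propositionNonlinearEstimate}, except that you pass over the extension of \(\abs{u_\varepsilon}\) by zero across \(\partial\Omega\): since \(\Bar{\Lambda}\) may touch \(\partial\Omega\), the balls \(B_{\varepsilon_n\rho}(z_n)\) may leave \(\Omega\), and the subsolution property of the zero extension is exactly what the paper proves via the projection argument in lemma~\ref{lemmaEquivalenceLpLinfty}; this should be cited or reproduced rather than asserted.

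The genuine gap is in your localization of the concentration point. You invoke P.-L.~Lions' inequality for \(\chi_\Lambda u_\varepsilon\), but this function is in general not in \(H^1(\R^N)\) (the sharp cutoff destroys Sobolev regularity across \(\partial\Lambda\)), so the inequality cannot be applied to it; and if you replace \(\chi_\Lambda\) by the smooth cutoff \(\psi\) of lemma~\ref{lemmaRescaledSobolev}, the near-maximising ball produced by Lions' lemma is only known to lie in \(\supp\psi\), a neighbourhood of \(\Lambda\) of fixed size, not within \(O(\varepsilon)\) of \(\Bar{\Lambda}\). At such a point nothing in the hypotheses controls \(V\), \(A\) or \(\mathcal{C}\), so neither \(\mathcal{C}(x_*)\ge\inf_\Lambda\mathcal{C}\) nor the applicability of proposition~\ref{propositionLowerBound} survives; restricting the supremum to balls that meet \(\Lambda\) is true but requires redoing the covering proof of Lions' lemma rather than citing it. The paper sidesteps this entirely: from \(\norm{u_\varepsilon}^2 = \int_\Omega \scalprod{\mathsuper{g}_\varepsilon(u_\varepsilon)}{u_\varepsilon}\), \((g_2)\), \((g_3)\) and lemma~\ref{lemmaPositivityMagnetic} one gets
\(\bigl(1-\mu-\norm{\abs{u_\varepsilon}^{p-2}/V}_{L^\infty(\Lambda)}\bigr)\norm{u_\varepsilon}^2\le 0\),
hence \(\liminf_{\varepsilon\to 0}\norm{u_\varepsilon}_{L^\infty(\Lambda)}>0\); choosing \(x_\varepsilon\in\Lambda\) as near-maximum points gives a non-vanishing local sup on \(B_{\varepsilon\rho}(x_\varepsilon)\cap\Omega\), and lemma~\ref{lemmaEquivalenceLpLinfty} converts this into the non-vanishing rescaled \(L^p\) mass required by proposition~\ref{propositionLowerBound}. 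Either adopt this route or prove the localized Lions estimate; as written, the step fails.
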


In order to deduce proposition~\ref{propositionVanishingSmallBalls} we need to translate \(L^p\) bounds into \(L^\infty\) bounds by suitable regularity estimates.

\begin{lemma}[Vanishing in the mean implies uniform vanishing on balls]
\label{lemmaEquivalenceLpLinfty}
Let \((u_n)_{n \in \N}\) be a sequence of weak solutions of the problem $(\mathcal{Q}_{\varepsilon_n})$ in \(H^1_{A/\varepsilon_n^2,V} (\Omega)\), \(\rho > 0\) and \(x_n \in \Omega\). 
If 
\[
 \lim_{n \to \infty}\frac{1}{\varepsilon_n^N} \int_{B_{\varepsilon_n \rho} (x_n) \cap \Omega} 
 \abs{u_n}^p = 0,
\]
then 
\[
 \lim_{n \to \infty} \norm{u_n}_{L^{\infty} (B_{\varepsilon_n \rho/2} (x_n) \cap \Omega)}  = 0.
\]
\end{lemma}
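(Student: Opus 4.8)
The plan is to reduce everything to a scalar subsolution estimate for $\abs{u_n}$ via Kato's inequality, and then run a single step of De Giorgi--Nash--Moser to pass from the $L^p$ smallness hypothesis to an $L^\infty$ bound on a slightly smaller ball. First I would rescale: set $w_n(y) \defeq u_n(x_n + \varepsilon_n y)$, defined on $(\Omega - x_n)/\varepsilon_n$, which by hypothesis satisfies $\int_{B_\rho \cap (\Omega-x_n)/\varepsilon_n} \abs{w_n}^p \to 0$, and I want to conclude $\norm{w_n}_{L^\infty(B_{\rho/2}\cap\cdots)} \to 0$. The rescaled equation is the magnetic equation $-\Delta_{\varepsilon_n^{-1}A(x_n+\varepsilon_n\cdot)} w_n + V(x_n+\varepsilon_n\cdot) w_n = g_{\varepsilon_n}(x_n+\varepsilon_n\cdot, w_n)$; note that, although the magnetic coefficients are not uniformly controlled (the difficulty flagged in the introduction), they disappear from the scalar inequality.

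The key step is Kato's inequality \cite{Kato1972}: since $w_n$ is a weak solution of a magnetic Schrödinger equation, $\abs{w_n}$ is a nonnegative weak subsolution of
\[
 -\Delta \abs{w_n} \le \re\Bigl(\frac{\overline{w_n}}{\abs{w_n}}\bigl(\Delta_{A_n} w_n\bigr)\Bigr) = \re\Bigl(\frac{\overline{w_n}}{\abs{w_n}}\bigl(V_n w_n - g_{\varepsilon_n}(\cdot,w_n)\bigr)\Bigr) \le \abs{g_{\varepsilon_n}(\cdot,w_n)} \le \abs{w_n}^{p-1},
\]
using $(g_2)$ and discarding the good sign of $V_n w_n$ (recall $V \ge 0$). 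Thus $f_n \defeq \abs{w_n} \ge 0$ satisfies $-\Delta f_n \le f_n^{p-1}$ weakly on $B_\rho \cap (\Omega - x_n)/\varepsilon_n$, together with $f_n = 0$ on the portion of $\partial\bigl((\Omega-x_n)/\varepsilon_n\bigr)$ inside $B_\rho$, coming from the boundary condition in $(\mathcal Q_{\varepsilon_n})$; extending $f_n$ by $0$ across that boundary preserves the subsolution property. Since $\tfrac12 - \tfrac1N < \tfrac1p < \tfrac12$ forces $p < 2^* = \tfrac{2N}{N-2}$, the right-hand side $f_n^{p-1}$ lies in $L^{p/(p-1)}_{\mathrm{loc}}$ with $p/(p-1) > N/2$ exactly when $p < \tfrac{2N}{N-2}$ — wait, more carefully: a standard Brezis--Kato / Moser iteration argument shows that a nonnegative subsolution of $-\Delta f \le f^{p-1}$ with $p$ subcritical whose $L^p$ norm on a ball is small is bounded on the half-ball by a constant multiple of a power of that $L^p$ norm. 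Quantitatively, for $p<2^*$ one has an a priori estimate of the shape
\[
 \norm{f_n}_{L^\infty(B_{\rho/2})} \le C\bigl(1 + \norm{f_n}_{L^p(B_\rho)}^{(p-2)\theta}\bigr)\norm{f_n}_{L^p(B_\rho)},
\]
for suitable $C,\theta$ depending only on $N,p,\rho$; the precise form is the content of the De Giorgi--Nash--Moser machinery (see e.g.\ Gilbarg--Trudinger for the linearized iteration, or the Brezis--Kato lemma). Since $\norm{f_n}_{L^p(B_\rho)} \to 0$, this gives $\norm{f_n}_{L^\infty(B_{\rho/2})} \to 0$, which is the claim.

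The main obstacle is making the Moser iteration genuinely uniform in $n$ despite the moving and possibly irregular domain $(\Omega-x_n)/\varepsilon_n$. Two points need care: first, the geometry of the domain near the boundary — here I would use that $\Omega$ has a $C^1$ boundary in a neighbourhood of $\overline\Lambda$ (hypothesis of Theorems~\ref{theoremGlobal} and \ref{theoremLocal}), so after rescaling the boundary flattens and the half-ball reflection / zero-extension argument applies with constants independent of $n$; alternatively one extends $f_n$ by zero and works with the subsolution on all of $B_\rho$, which sidesteps boundary regularity entirely since a zero-extension of a subsolution vanishing on the boundary is still a subsolution. Second, one must check the iteration constants do not degenerate: they depend only on $N$, $p$, and the radius $\rho$, not on the magnetic potential (which has been eliminated) nor on $V_n$ (whose contribution we threw away by sign), so uniformity is automatic. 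Thus the only real work is citing or reproducing the subcritical $L^p \to L^\infty$ estimate for subsolutions of $-\Delta f \le f^{p-1}$, for which I would invoke a Brezis--Kato-type lemma together with De Giorgi--Nash--Moser, exactly as announced in the introduction.
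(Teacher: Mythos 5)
Your proposal follows essentially the same route as the paper's proof: Kato's inequality reduces the magnetic problem to the scalar distributional inequality \(-\varepsilon_n^2\Delta\abs{u_n}\le\abs{u_n}^{p-1}\), the modulus is extended by zero outside \(\Omega\), and a subcritical ``small in \(L^p\) implies small in \(L^\infty\)'' estimate for nonnegative subsolutions (the paper's proposition~\ref{propositionNonlinearEstimate}, proved there by a Moser-type iteration, and needed only in the small-norm regime, so your slightly too strong unconditional form is harmless) is applied after rescaling by \(\varepsilon_n\). The one step you assert but the paper actually has to prove is that the zero extension of the nonnegative \(H^1_0\) subsolution remains a subsolution across \(\partial\Omega\); the paper establishes this with a projection/maximum-principle duality argument requiring no boundary regularity, exactly as you anticipated when discarding the reflection alternative.
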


A fundamental tool to prove lemma~\ref{lemmaEquivalenceLpLinfty} is Kato's inequality. In order to state it we recall that the sign of a complex number \(w \in \C\) is defined by
\[
 \sign w 
 \defeq \begin{cases}
     w/\abs{w} & \text{if \(w \ne 0\)},\\
     0 & \text{if \(w = 0\)}.
   \end{cases}
\]

\begin{proposition}[Kato's inequality \cite{Kato1972}*{lemma~A}]
\label{propositionKato}
Let \(u \in L^1_\mathrm{loc} (\Omega)\). If \(\Delta_A u \in L^1_\mathrm{loc} (\Omega)\), then \(\Delta \abs{u} \in L^1_\mathrm{loc} (\Omega)\), and 
\[
 -\Delta \abs{u} \le -\scalprod{\sign (\Bar{u})}{\Delta_A u}.
\]
\end{proposition}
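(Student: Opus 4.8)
The plan is to prove the inequality first for smooth \(u\), where it reduces to a pointwise differential identity combined with a pointwise Cauchy--Schwarz estimate, and then to descend to the general case \(u \in L^1_{\mathrm{loc}}(\Omega)\), \(\Delta_A u \in L^1_{\mathrm{loc}}(\Omega)\), by a double approximation: one mollifies \(u\) and \emph{simultaneously} regularizes the modulus, keeping the modulus regularization until \emph{after} the mollification limit, so as never to need control of \(\Delta_A u\) on the set \(\{u = 0\}\). In the applications of this paper \(u\) is a weak solution and is therefore already in \(W^{1,1}_{\mathrm{loc}}(\Omega)\), so that \(\Delta u = \Delta_A u - 2\imath\scalprod{Du}{A} - \imath(d^*A) u + \abs{A}^2 u\) is a well-defined element of \(L^1_{\mathrm{loc}}(\Omega)\); the additional content of Kato's lemma~A is precisely that \(u \in L^1_{\mathrm{loc}}\) together with \(\Delta_A u \in L^1_{\mathrm{loc}}\) already forces this regularity.

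For the smooth case, fix \(\delta > 0\) and set \(u_\delta \defeq (\abs{u}^2 + \delta^2)^{1/2}\), which is smooth and bounded below by \(\delta\). Because \(A\) is real-valued, the connection term \(\imath u A\) in \(D_A u\) drops out under \(\re\), and one finds \(u_\delta\, \nabla u_\delta = \re\bigl(\Bar u\, D_A u\bigr)\). Taking the divergence, expanding \(\Delta_A u = \sum_j (D_A)_j (D_A)_j u\), and checking that the commutator terms involving \(A\) cancel, one obtains the pointwise identity
\[
  u_\delta\, \Delta u_\delta = \abs{D_A u}^2 - \abs{\nabla u_\delta}^2 + \re\bigl(\Bar u\, \Delta_A u\bigr).
\]
Since \(\bigabs{\re(\Bar u\, D_A u)} \le \abs{u}\, \abs{D_A u} \le u_\delta\, \abs{D_A u}\), we have \(\abs{\nabla u_\delta} \le \abs{D_A u}\), so the first two terms on the right-hand side are nonnegative; dividing by \(u_\delta > 0\) gives the pointwise bound \(\Delta u_\delta \ge \re\bigl(\tfrac{\Bar u}{u_\delta}\, \Delta_A u\bigr)\).

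For the general case, let \(u_\eta \defeq j_\eta * u\) on a fixed relatively compact open subset. Then \(u_\eta\) is smooth, \(u_\eta \to u\) and \(\nabla u_\eta \to \nabla u\) both in \(L^1_{\mathrm{loc}}\) and almost everywhere, and although \(\Delta_A\) does not commute with mollification, \(\Delta_A u_\eta \to \Delta_A u\) in \(L^1_{\mathrm{loc}}\): the second-order part is \((\Delta u) * j_\eta\) and the lower-order terms converge because \(A \in C^1\). Fix \(\delta > 0\), apply the smooth pointwise bound to \(u_\eta\), and test against an arbitrary \(\varphi \in C^\infty_c(\Omega)\) with \(\varphi \ge 0\):
\[
  -\int_\Omega \abs{u_\eta}\, \Delta\varphi \le -\int_\Omega \re\Bigl(\tfrac{\overline{u_\eta}}{u_{\eta,\delta}}\, \Delta_A u_\eta\Bigr) \varphi,
  \qquad u_{\eta,\delta} \defeq (\abs{u_\eta}^2 + \delta^2)^{1/2}.
\]
Letting \(\eta \to 0\) with \(\delta\) fixed, the left-hand side converges since \(\abs{u_\eta} \to \abs{u}\) in \(L^1_{\mathrm{loc}}\); since the map \(s \mapsto \Bar s / (\abs{s}^2 + \delta^2)^{1/2}\) is bounded by \(1\) and Lipschitz, \(\overline{u_\eta}/u_{\eta,\delta} \to \Bar u / u_\delta\) almost everywhere, and combined with the strong \(L^1_{\mathrm{loc}}\) convergence of \(\Delta_A u_\eta\) the right-hand side converges to \(-\int_\Omega \re\bigl(\tfrac{\Bar u}{u_\delta}\, \Delta_A u\bigr) \varphi\). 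Finally letting \(\delta \to 0\), we have \(\Bar u / u_\delta \to \sign(\Bar u)\) pointwise with modulus at most \(1\), so dominated convergence with majorant \(\abs{\Delta_A u}\, \abs{\varphi} \in L^1\) yields \(-\Delta\abs{u} \le -\re\bigl(\sign(\Bar u)\, \Delta_A u\bigr) = -\scalprod{\sign(\Bar u)}{\Delta_A u}\) in \(\mathcal{D}'(\Omega)\); in particular \(\Delta\abs{u} \in L^1_{\mathrm{loc}}(\Omega)\) because the right-hand side is.

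The step expected to be the main obstacle is exactly the interplay with the low regularity of \(u\). One cannot simply let \(\delta \to 0\) in the smooth pointwise bound \emph{before} integrating, since on \(\{u = 0\}\) the quantity \(\re\bigl((\overline{u_\eta}/u_{\eta,\delta})\,\Delta_A u_\eta\bigr)\) is not controlled in the limit unless one already knows \(\Delta_A u = 0\) almost everywhere there --- which would need second-order regularity of \(u\); postponing \(\delta \to 0\) until after the mollification limit removes this difficulty. The other point requiring care --- needed only for the sharp hypotheses of Kato's lemma~A and not in the applications here --- is the preliminary upgrade of \(u\) to \(W^{1,1}_{\mathrm{loc}}(\Omega)\), which is what makes \(\Delta u \in L^1_{\mathrm{loc}}(\Omega)\) and what underlies the convergence \(\Delta_A u_\eta \to \Delta_A u\).
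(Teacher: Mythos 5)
The paper offers no proof of this proposition at all --- it is quoted directly from Kato's lemma~A --- so the benchmark is Kato's own argument, and your scheme (the regularized modulus \(u_\delta=(\abs{u}^2+\delta^2)^{1/2}\), the pointwise identity and the bound \(\abs{\nabla u_\delta}\le\abs{D_A u}\) for smooth functions, mollification, then \(\eta\to0\) before \(\delta\to0\)) is exactly that classical argument, and the smooth-case computation is correct. The genuine gap is the step \(\Delta_A u_\eta\to\Delta_A u\) in \(L^1_{\mathrm{loc}}\), which is the whole difficulty of the magnetic case: you justify it only modulo the ``preliminary upgrade'' \(u\in W^{1,1}_{\mathrm{loc}}(\Omega)\), \(\Delta u\in L^1_{\mathrm{loc}}(\Omega)\), which is not among the hypotheses of the proposition and is nowhere established; deriving it from \(u,\Delta_A u\in L^1_{\mathrm{loc}}\) would itself require a nontrivial elliptic bootstrap with \(L^1\) data, so as written the proof covers only the regular case needed in the applications, not the statement as given. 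The standard way to close this under precisely the stated hypotheses is Friedrichs' commutator lemma: since \(A\in C^1\) is locally Lipschitz and \(u\in L^1_{\mathrm{loc}}\), one has \(\scalprod{D(j_\eta\ast u)}{A}-j_\eta\ast\scalprod{Du}{A}\to0\) in \(L^1_{\mathrm{loc}}\), and the zero-order commutators are elementary, so \(\Delta_A u_\eta-j_\eta\ast(\Delta_A u)\to0\) and hence \(\Delta_A u_\eta\to\Delta_A u\) in \(L^1_{\mathrm{loc}}\) with no a priori regularity of \(u\) beyond \(L^1_{\mathrm{loc}}\). With that substitution the rest of your two-limit argument goes through verbatim (modulo the harmless slip that the left-hand side of your tested inequality should be \(u_{\eta,\delta}\), not \(\abs{u_\eta}\), until the final limit \(\delta\to0\); the difference is at most \(\delta\)).

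A second, smaller defect is the closing deduction ``in particular \(\Delta\abs{u}\in L^1_{\mathrm{loc}}(\Omega)\) because the right-hand side is.'' A one-sided distributional bound \(\Delta\abs{u}\ge h\) with \(h\in L^1_{\mathrm{loc}}\) only shows that \(\Delta\abs{u}-h\) is a nonnegative distribution, hence a locally finite measure; it gives no absolute continuity. Indeed the membership can fail: \(u(x)=x_1\) with \(A=0\) satisfies the hypotheses, yet \(\Delta\abs{u}\) is twice the surface measure on \(\{x_1=0\}\). What your argument (like Kato's) actually yields is the distributional inequality \(-\Delta\abs{u}\le-\scalprod{\sign(\Bar{u})}{\Delta_A u}\) together with the fact that \(\Delta\abs{u}\) is locally a real measure; that inequality is all the paper ever uses (in lemma~\ref{lemmaEquivalenceLpLinfty} and in the barrier comparison of lemma~\ref{lemmaBarrier}), but the clause \(\Delta\abs{u}\in L^1_{\mathrm{loc}}(\Omega)\) in the statement cannot be obtained this way.
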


\begin{proof}[Proof of lemma~\ref{lemmaEquivalenceLpLinfty}]
By Kato's inequality (proposition~\ref{propositionKato}),
\[
 -\varepsilon_n^2 \Delta \abs{u_n} \le \abs{u}^{p - 1}.
\]
weakly in \(\Omega\). We define the function \(w_n : \R^N \to \R\) to be the extension by \(0\) to \(\R^N\) of \(\abs{u_n}\).
It is clear that 
\begin{equation}
\label{ineqwn}
 -\varepsilon_n^2 \Delta w_n \le w_n^{p - 1}
\end{equation}
weakly in \(\Omega\) and that \(w_n \vert_\Omega \in H^1_0 (\Omega)\). 
We claim that \(w_n\) satisfies weakly \eqref{ineqwn} in the whole space \(\R^N\).
Let \(\varphi \in C^1_c (\R^N)\). We define \(\psi \in H^1_0 (\Omega)\) to be the unique solution of the problem
\[
  \left\{
    \begin{aligned}
     -\Delta \psi & = -\Delta \varphi & &\text{in \(\Omega\)},\\
     \psi & = 0 & & \text{on \(\partial \Omega\)}.
    \end{aligned}
  \right.
\]
Equivalently, \(\psi\) is the unique minimizer of the functional 
\[
 \frac{1}{2} \int_{\Omega} \abs{D \psi}^2 - \int_{\Omega} \scalprod{D \varphi}{D \psi};
\]
that is, the function \(\psi\) is the projection of \(\varphi \vert_\Omega \in H^1 (\Omega)\) on \(H^1_0 (\Omega)\).
By the maximum principle, \(\psi \le \varphi\) and thus \(\psi \in L^\infty (\Omega)\).

Since \(w_n\vert_\Omega \in H^1_0 (\Omega)\), we have 
\[
  \int_{\Omega} \scalprod{D w_n}{D \varphi} 
  = \int_{\Omega} \scalprod{D w_n}{D \psi} 
  = \int_{\Omega} w_n^{p - 1} \psi
  \le \int_{\Omega} w_n^{p - 1} \varphi,
\]
and thus \eqref{ineqwn} is satisfied weakly in the whole space \(\R^N\).

Since \(\frac{1}{p} < \frac{1}{2} - \frac{1}{N}\) we have \(p > \frac{p - 2}{2}N\). By the next proposition \ref{propositionNonlinearEstimate} and by scaling, we conclude that
\[
 \lim_{n \to \infty} \sup_{B_{\varepsilon_n \rho/2} (x_n) \cap \Omega} \abs{u_n} =
 \lim_{n \to \infty} \sup_{B_{\varepsilon_n \rho/2} (x_n)} w_n = 0.\qedhere
\]
\end{proof}

\begin{proposition}
\label{propositionNonlinearEstimate}
Let \(p \in [2, \infty)\) and \(q \ge [1, \infty)\). If \(q > \frac{p - 2}{2} N\), then for every \(\eta > 0\) and \(\rho < R\), there exists \(\delta > 0\) such that if \(w \in L^{p - 1} (B_R)\) and 
\begin{align*}
  w  &\ge 0 
    && \text{almost everywhere in \(B_R\)}\\
  - \Delta w &\le w^{p - 1} 
    && \text{in the sense of distributions in \(B_R\)},\\
  \int_{B_R} w^q &\le \delta,
\end{align*}
then 
\[
  \norm{w}_{L^\infty (B_\rho)} \le \eta.
\]
\end{proposition}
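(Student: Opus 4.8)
The plan is to obtain the estimate from a Moser iteration, after rewriting \(-\Delta w \le w^{p-1}\) as the linear differential inequality \(-\Delta w \le a\,w\) with potential \(a \defeq w^{p-2}\ge 0\). The hypothesis \(q > \frac{p-2}{2}N\) is exactly what puts \(a\) in \(L^{s}(B_R)\) with \(s\defeq\frac{q}{p-2}>\frac N2\), and one has
\[
  \norm{a}_{L^s(B_R)} = \norm{w}_{L^q(B_R)}^{p-2}.
\]
Since for \(p=2\) the inequality is already linear and the estimate classical, I may assume \(p>2\), so that \(\norm{a}_{L^s(B_R)}\le\delta^{(p-2)/q}\le 1\) once \(\delta\le 1\). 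It therefore suffices to prove a \emph{linear} local boundedness estimate: there is \(C=C(N,p,q,\rho,R)\) such that, whenever \(w\ge 0\), \(0\le a\in L^s(B_R)\) with \(\norm{a}_{L^s(B_R)}\le 1\) and \(-\Delta w\le aw\) weakly in \(B_R\), one has \(\norm{w}_{L^\infty(B_\rho)}\le C\,\norm{w}_{L^q(B_R)}\); indeed then \(\norm{w}_{L^\infty(B_\rho)}\le C\,\delta^{1/q}\), and it remains to take \(\delta\) with \(C\,\delta^{1/q}\le\eta\).

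Before iterating I would secure the regularity needed to run the argument, namely \(w\in L^\infty_{\mathrm{loc}}(B_R)\cap H^1_{\mathrm{loc}}(B_R)\). Since \(\Delta w+aw\) is a nonnegative distribution, hence a locally finite Radon measure, and \(w^{p-1}=aw\in L^1_{\mathrm{loc}}\), one has \(w\in W^{1,r}_{\mathrm{loc}}(B_R)\) for every \(r<\frac N{N-1}\); using \(\norm{a}_{L^s(B_R)}\) together with \(s>\frac N2\) and a short elliptic bootstrap which strictly gains integrability at each step, this upgrades to \(w\in L^\infty_{\mathrm{loc}}\cap H^1_{\mathrm{loc}}\), the individual steps being made rigorous by the usual truncations.

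The iteration itself runs over shrinking balls \(\rho=r_\infty<\dotsb<r_1<r_0=R\) with cut-offs \(\zeta_k\) equal to \(1\) on \(B_{r_{k+1}}\), supported in \(B_{r_k}\), \(\abs{D\zeta_k}\le C2^k/(R-\rho)\). Testing \(-\Delta w\le a(w+\tau)\) against \(\zeta_k^2(w+\tau)^{2\gamma_k-1}\) for \(\tau>0\) and Young's inequality give, with \(v\defeq(w+\tau)^{\gamma_k}\),
\[
  \int\zeta_k^2\abs{Dv}^2\le C\gamma_k\int\abs{D\zeta_k}^2v^2+C\gamma_k\int\zeta_k^2\,a\,v^2 .
\]
Since \(s>\frac N2\), Hölder's inequality with exponents \(s\) and \(s'\) produces the exponent \(2s'<2^\ast=\frac{2N}{N-2}\); interpolating \(L^{2s'}\) between \(L^2\) and \(L^{2^\ast}\), applying the Sobolev inequality to \(\zeta_kv\) and using Young once more absorbs a part of \(\int\zeta_k^2av^2\) into the left-hand side, leaving a lower-order term whose coefficient is a continuous function of \(\norm{a}_{L^s(B_R)}\) times a power of \(\gamma_k\). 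Inserting the result into the Sobolev inequality gives a recursion
\[
  \Bigl(\int_{B_{r_{k+1}}}(w+\tau)^{2\gamma_k\chi}\Bigr)^{1/\chi}\le C_k\int_{B_{r_k}}(w+\tau)^{2\gamma_k},\qquad\chi\defeq\tfrac N{N-2}>1,
\]
with \(\gamma_{k+1}\defeq\chi\gamma_k\), \(2\gamma_0\defeq q\) (with the obvious modification if \(q=1\)), and \(C_k\) bounded by a fixed power of \(2^k\gamma_k\) as long as \(\norm{a}_{L^s(B_R)}\le 1\). Iterating, taking logarithms, summing the resulting geometric series (convergent because \(\chi>1\)), then letting \(k\to\infty\) so that \(2\gamma_k\to\infty\) and \(r_k\to\rho\), and finally \(\tau\downarrow 0\), one reaches \(\norm{w}_{L^\infty(B_\rho)}\le C\norm{w}_{L^q(B_R)}\) with \(C\) of the required form.

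The delicate point will be the bookkeeping in this last step: one has to keep careful track of how the constant at stage \(k\) depends on \(\gamma_k\), on \(\abs{D\zeta_k}\sim 2^k\) and on \(\norm{a}_{L^s(B_R)}\), and to check that over the infinitely many steps these combine into a single finite constant that is uniform in \(a\) as long as \(\norm{a}_{L^s(B_R)}\) is bounded. This is exactly where the strict subcriticality \(q>\frac{p-2}{2}N\) is used: it forces \(s>\frac N2\), hence \(2s'<2^\ast\), which is what makes the potential term genuinely of lower order and hence absorbable — and it is also what makes the preliminary integrability bootstrap terminate after finitely many steps.
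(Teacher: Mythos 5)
Your proposal is correct, but it follows a genuinely different route from the paper. You freeze the nonlinearity as a potential, writing $-\Delta w \le a\,w$ with $a = w^{p-2}$, note that $q > \frac{p-2}{2}N$ is exactly $a \in L^{s}(B_R)$ with $s = q/(p-2) > \frac{N}{2}$ and $\norm{a}_{L^s} \le \delta^{(p-2)/q}$, and then run one Moser iteration to obtain the Brezis--Kato/Trudinger-type local boundedness estimate $\norm{w}_{L^\infty(B_\rho)} \le C\norm{w}_{L^q(B_R)}$ with $C$ uniform for $\norm{a}_{L^s}\le 1$, after which smallness of $\delta$ concludes. The paper instead keeps the nonlinearity as a source term: it proves a linear estimate for nonnegative distributional subsolutions of $-\Delta w \le f$ with $f \in L^r$ (lemma~\ref{lemmaRegularitySubsolutions}, including the nonclassical range $r \le \frac{N}{2}$ where only finitely much integrability is gained), applies it with $f = w^{p-1}$, and iterates finitely many times, the strict subcriticality guaranteeing a genuine gain at each step until $w^{p-1}$ lands in $L^r$ with $r > \frac{N}{2}$. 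Your version buys an explicit linear bound $\norm{w}_{L^\infty(B_\rho)} \le C\delta^{1/q}$, hence an explicit $\delta(\eta)$; the paper's version never needs the potential-form estimate and stays within its self-contained lemma. Two points in your sketch require the same care the paper spends on that lemma: the passage from the purely distributional hypothesis ($w \in L^{p-1}$ only) to the $H^1_{\mathrm{loc}}$/truncated setting in which the test functions $\zeta_k^2 (w+\tau)^{2\gamma_k - 1}$ are admissible --- your preliminary bootstrap is itself a finite iteration of precisely such a linear subsolution estimate, made rigorous by mollification as in the paper --- and the start of the iteration when $q \in [1,2)$, where $2\gamma_0 - 1 = q - 1 \in [0,1)$ needs the usual interpolation/absorption trick rather than an ``obvious modification''. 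Both are standard, so the argument goes through.
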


The exponent \(q = \frac{p - 2}{2}N\) is critical in this statement. Indeed, if \(-\Delta w \le w^{p - 1}\), \(w \in L^q\) and \(w \ne 0\), then the function \(w_\lambda (x) =w (x/\lambda)/ \lambda^{\frac{2}{p - 1}} \) satisfies also the equation, \(\lim_{\lambda \to 0} \int_{B_R} w^q = 0\) and \(\lim_{\lambda \to 0} \norm{w}_{L^\infty (B_\rho)} = \infty\).

Proposition~\ref{propositionNonlinearEstimate} will follow from a linear regularity estimate.

\begin{lemma}[Regularity of distributional subsolutions to linear problems]
\label{lemmaRegularitySubsolutions}
Let \(\rho < R\), \(r \ge 1\) and \(q \in [1, \infty]\). Assume that either \(r > \frac{N}{2}\) or 
\(q < \infty\) and 
\[
 \frac{1}{q} \ge \frac{1}{r} - \frac{2}{N}.
\]
If \(f \in L^r (B_R)\) and \(w \in L^1 (B_R)\) is nonnegative and satisfies 
\begin{align*}
 -\Delta w &\le f&
  &\text{in \(B_R\)}, 
\end{align*}
in the sense of distributions, 
then \(w \in L^q (B_\rho)\), and 
\[
 \norm{w}_{L^q (B_\rho)} \le C \bigl(\norm{f}_{L^r (B_R)} + \norm{w}_{L^1 (B_R)}\bigr).
\]
\end{lemma}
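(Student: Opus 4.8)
The plan is to decompose $w$, on the ball $B_R$, as a Green potential of the positive part $f^{+}$ plus a subharmonic remainder, and then to invoke two classical facts: that an $L^{1}$ distributional subharmonic function is locally bounded from above with an $L^{1}$-controlled bound, and that the Riesz potential of order $2$ maps $L^{r}$ into the appropriate Lebesgue space. In particular the nonnegativity of $w$ is used in an essential way to pass from a one-sided distributional inequality to a pointwise estimate.

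First I would let $v \in L^{1}(B_R)$ be the Green potential of $f^{+}$ on $B_R$, that is, the solution of $-\Delta v = f^{+}$ in $B_R$ with $v = 0$ on $\partial B_R$. Since the Green function $G_R$ of $B_R$ is nonnegative, $v \ge 0$; since moreover $G_R(x,y) \le c_N \abs{x-y}^{2-N}$ on $B_R$ (the fundamental solution of $-\Delta$; for $N \le 2$ at worst a logarithmic or a bounded kernel), Fubini's theorem gives $v \in L^{1}(B_R)$ with $\norm{v}_{L^{1}(B_R)} \le C \norm{f^{+}}_{L^{1}(B_R)}$ together with the pointwise bound $v(x) \le c_N \int_{B_R} \abs{x-y}^{2-N} f^{+}(y) \dif y$. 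Now set $s \defeq w - v$. In the sense of distributions on $B_R$ one has
\[
  -\Delta s = -\Delta w - f^{+} \le f - f^{+} = -f^{-} \le 0,
\]
so $s$ is subharmonic; being also in $L^{1}(B_R)$, it agrees almost everywhere with an upper semicontinuous function obeying the sub-mean-value inequality. Averaging over the ball of radius $(R-\rho)/2$ centred at a point of $B_\rho$, and using that $v \ge 0$ and $w \ge 0$ force $s^{+} \le w^{+} = w$, I obtain
\[
  \sup_{B_\rho} s \le \frac{C(N)}{(R-\rho)^{N}} \int_{B_R} s^{+} \le \frac{C(N)}{(R-\rho)^{N}} \norm{w}_{L^{1}(B_R)}.
\]
Since $w = s + v \ge 0$, this gives on $B_\rho$ the pointwise inequality $0 \le w \le C \norm{w}_{L^{1}(B_R)} + v$, whence $\norm{w}_{L^{q}(B_\rho)} \le C \norm{w}_{L^{1}(B_R)} + \norm{v}_{L^{q}(B_\rho)}$.

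It then remains to bound $\norm{v}_{L^{q}(B_\rho)} \le C \norm{f^{+}}_{L^{r}(B_R)} \le C \norm{f}_{L^{r}(B_R)}$, using the domination of $v$ by the Riesz potential $I_2(f^{+}\chi_{B_R})$. If $r > N/2$, the kernel $y \mapsto \abs{x-y}^{2-N}$ lies in $L^{r'}(B_{2R})$ uniformly in $x$, so Hölder's inequality gives $\norm{v}_{L^{\infty}(B_R)} \le C\norm{f}_{L^{r}(B_R)}$, which covers every $q \in [1,\infty]$. Otherwise $r \le N/2$ and the hypothesis forces $q < \infty$ and $\tfrac1q \ge \tfrac1r - \tfrac2N \ge 0$; then the Hardy--Littlewood--Sobolev inequality (for $1 < r < N/2$ directly; at $r = N/2$ the Trudinger embedding; at $r = 1$ its weak-type form together with the boundedness of $B_\rho$, which forces one to stay at the strictly subcritical exponents there) gives $\norm{v}_{L^{q}(B_\rho)} \le C\norm{f}_{L^{r}(B_R)}$. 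Combining with the previous paragraph finishes the proof, and since $v$ and $s$ are built linearly from $f$ and $w$ all constants depend only on $N$, $q$, $r$, $\rho$, $R$.

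The only step requiring genuine care is the passage from $\Delta s \ge 0$ in the distributional sense with $s \in L^{1}$ to a pointwise sub-mean-value inequality with a constant depending only on $N$, $\rho$, $R$: this is a Weyl-type regularity statement, proved for instance by mollifying $s$, observing that $s * \varphi_{\varepsilon}$ is smooth, subharmonic, and monotone in $\varepsilon$, and passing to the limit. The rest is bookkeeping — the nonnegativity and local integrability of the Green potential $v$, its domination by a Riesz potential, and elementary interpolation on the bounded set $B_\rho$. (Alternatively the whole lemma can be obtained by the De Giorgi--Nash--Moser / Stampacchia truncation method applied to the subsolution $w$, but the potential-theoretic splitting above is the most economical route.)
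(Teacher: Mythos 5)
Your argument is sound, but it takes a genuinely different route from the paper. The paper proves the lemma by a mollified De Giorgi--Moser type iteration: it regularizes the subsolution, tests \(-\Delta w_\lambda \le f_\lambda\) against \(\eta^2 w_\lambda^{\theta-1}\), combines the resulting Caccioppoli-type estimate with the Sobolev and H\"older inequalities to obtain \(\norm{w}_{L^q(B_\rho)} \le C\bigl(\norm{f}_{L^r(B_R)}+\norm{w}_{L^\theta(B_R)}\bigr)\) with \(\frac{1}{r}=1-\frac{\theta-1}{q}\), and then lowers \(\theta\) to \(1\) in finitely many steps. Your splitting \(w=v+s\), with \(v\) the Green potential of \(f^+\) and \(s\) an \(L^1\) distributional subharmonic remainder controlled by the sub-mean-value inequality, replaces this iteration by two classical black boxes (Weyl-type regularity of subharmonic distributions, and Riesz-potential/Hardy--Littlewood--Sobolev bounds); it is shorter, and it isolates clearly where the nonnegativity of \(w\) enters. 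What the paper's energy-estimate approach buys in exchange is independence from the explicit Green kernel: it would carry over essentially verbatim to divergence-form operators with measurable coefficients, and it keeps the whole proof at the level of the integral estimates already used elsewhere in the paper. One point you flag yourself deserves emphasis: at \(r=1\) your weak-type HLS step only reaches the strictly subcritical exponents \(\frac{1}{q}>1-\frac{2}{N}\), while the statement formally admits equality. This is not a defect of your proof: taking for \(f\) an approximate identity and for \(w\) its Green potential in \(B_R\) shows that the estimate with \(r=1\) and \(\frac{1}{q}=1-\frac{2}{N}\) is in fact false, and the paper's own iteration degenerates there too (this endpoint corresponds to \(\theta=1\), where the factor \(\theta-1\) in the Caccioppoli step vanishes); it is a borderline case that plays no role in the application to proposition~\ref{propositionNonlinearEstimate}, where there is room to perturb the exponents. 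Apart from spelling out the routine adjustments for \(N\le 2\) and the mollification argument behind the sub-mean-value inequality, your proof is complete.
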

\begin{proof}
When \(r > \frac{2}{N}\), this is a classical regularity result \citelist{\cite{GilbargTrudinger1998}*{theorem 8.16}\cite{Trudinger1973}}.
Otherwise the proof follows from the same iteration argument that we develop here for the sake of completeness.

We assume without loss of generality that \(q \ge r\).
We choose a mollification kernel \(\rho \in C^2_c (\R^N)\), such that \(\supp \rho \subset B_1\) and define \(w_\lambda\) by \(w_\lambda = \lambda + \rho_\lambda \ast w \in C^2 (B_{R - \lambda})\) and \(f_\lambda = \rho_\lambda \ast f\), where \(\rho_\lambda (y) = \rho (y/\lambda) / \lambda^N\). 
We observe that \(-\Delta w_\lambda \le f_\lambda\).
We choose \(\eta \in C^1_c (B_R)\) such that \(\eta = 1\) on \(B_\rho\).
If \(\supp_{\eta} \subset B_{R - \lambda}\), we compute that 
\begin{multline*}
  (\theta - 1) \int_{B_R} \abs{D (\eta  w_\lambda^{\frac{\theta}{2}})}^2\\
  = \bigl(\tfrac{\theta}{2}\bigr)^2 \int_{B_R} \scalprod{D w_\lambda}{D (\eta^2 w_\lambda^{\theta - 1})}
    + (\theta - 2) \int_{B_R} \scalprod{D (\eta w_\lambda^{\frac{\theta}{2}})}{w_\lambda^{\frac{\theta}{2}} D \eta}
    + \int_{B_R} \abs{D \eta}^2 w_\lambda^{\theta}\\
  \le \bigl(\tfrac{\theta}{2}\bigr)^2 \int_{B_R} \scalprod{D w_\lambda}{D (\eta^2 w_\lambda^{\theta - 1})}
    + \bigabs{\tfrac{\theta}{2} - 1} \int_{B_R} \abs{D (\eta w_\lambda^{\frac{\theta}{2}})}^2
    + \bigl(1 + \bigabs{\tfrac{\theta}{2} - 1}\bigr) \int_{B_R} \abs{D \eta}^2 w_\lambda^{\theta},
\end{multline*}
and therefore, by the inequation satisfied by \(w_\lambda\),
\[
  \begin{split}
    \int_{B_R} \abs{D (\eta w_\lambda^{\frac{\theta}{2}})}^2 
      &\le C_1 \Bigl(\int_{B_R} \scalprod{D w_\lambda}{D (\eta^2 w_\lambda^{\theta})}
	+ \int_{B_R} \abs{D \eta}^2 w_\lambda^{\theta}\Bigr)\\
      &\le C_1 \Bigl(\frac{\theta}{2} \int_{B_R} f_\lambda \eta^2 w_\lambda^{\theta - 1}
	+ \int_{B_R} \abs{D \eta}^2 w_\lambda^{\theta}\Bigr).
   \end{split}
\]
If we assume now that \(\frac{1}{r} = 1 - \frac{\theta - 1}{q}\), if \(\frac{\theta}{q} \ge 1-\frac{2}{N}\), we have by the Sobolev and the H\"older inequalities,
\[
 \Bigl(\int_{B_R} (\eta^\frac{2}{\theta} w_\lambda)^q \Bigr)^{\frac{\theta}{q}}
 \le C_2 \biggl(\Bigl(\int_{B_R} (\eta^{\frac{2}{\theta}} f_\lambda)^r \Bigr)^\frac{1}{r}
	\Bigl(\int_{B_R} (\eta^\frac{2}{\theta} w_\lambda)^q \Bigr)^\frac{\theta - 1}{q} + \int_{B_R} \abs{D \eta}^2 w_\lambda^\theta \biggr)
\]
From this we deduce that 
\[
\Bigl(\int_{B_\rho} w_\lambda^{q} \Bigr)^{\frac{1}{q}}
 \le C_2 \biggl(\Bigl(\int_{B_R} f_\lambda^r \Bigr)^\frac{1}{r}
+ \Bigl(\int_{B_R} w_\lambda^{\theta} \Bigr)^\frac{1}{\theta}\biggr).
\]
If \(w_\lambda \in L^\theta (B_R)\), we deduce by Fatou's lemma that \(w \in L^{q} (B_\rho)\), and 
\[
\Bigl(\int_{B_\rho} w^{q} \Bigr)^{\frac{1}{q}}
 \le C_2 \biggl(\Bigl(\int_{B_R} f^r \Bigr)^\frac{1}{r}
+ \Bigl(\int_{B_R} \abs{D \eta}^2 w^\theta \Bigr)^\frac{1}{\theta}\biggr).
\]
The exponent \(\theta\) can be replaced by \(1\) by iterating the estimate a finite number of times and noting that \(f \in L^\theta (B_R)\) by H\"older's inequality.
\end{proof}

\begin{proof}[Proof of proposition~\ref{propositionNonlinearEstimate}]
First, the proposition follows immediately from lemma~\ref{lemmaRegularitySubsolutions} with \(f = u^{p - 1}\) when \(q > \frac{p - 1}{2} N \).
Assume now that the proposition is proved for \(q > \underline{q}\).
We have just observed that this is the case with \(\underline{q} = \frac{p - 1}{2} N\).
Observe that for every \(q > \Bar{q}\), by lemma~\ref{lemmaRegularitySubsolutions},
\[
  \norm{u}_{L^q} \le C( \norm{u}^{p - 1}_{L^{\Tilde{q}}} + \norm{u}_{L^2}),
\]
where \(\frac{1}{\Tilde{q}} = \frac{p - 1}{q} - \frac{2}{N}\).
Hence, we have proved the proposition for \(q \ge 2\) such that 
\[
 \frac{1}{q} >  \frac{p - 1}{\underline{q}} - \frac{2}{N}.
\]
By iterating this procedure, the proposition is proved for \(q > \frac{p - 2}{2} N\) and \(q \ge 2\).
\end{proof}

\begin{proof}[Proof of proposition~\ref{propositionVanishingSmallBalls}]
First, we observe that, by lemma~\ref{lemmaPositivityMagnetic},
\[
\begin{split}
\int_{\Omega} \varepsilon^2 \abs{D_{A/\varepsilon^2} u}^2 + V \abs{u_\varepsilon}^2
 &= \int_{\Omega} \scalprod{\mathsuper{g}_\varepsilon (u_\varepsilon)}{u_\varepsilon}\\
 &\le \norm{\abs{u_\varepsilon}^{p - 2}/V}_{L^\infty (\Lambda)} \int_{\Lambda} V \abs{u_\varepsilon}^2 + \mu \int_{\Omega} \varepsilon^2 H\abs{u_\varepsilon}^2\\
&\le \bigl(\norm{\abs{u_\varepsilon}^{p - 2}/V}_{L^\infty (\Lambda)} + \mu \bigr) \int_{\Omega} \varepsilon^2 \abs{D_{A/\varepsilon^2} u_\varepsilon}^2 + V \abs{u_\varepsilon}^2.
\end{split}
\]
Hence,
\[
 \Bigl(1 - \mu - \norm{\abs{u_\varepsilon}^{p - 2}/V}_{L^\infty (\Lambda)}\Bigr) \int_{\R^N} \abs{D_{A/\varepsilon^2} u}^2 + V \abs{u_\varepsilon}^2 \le 0.
\]
Since \(\inf_{\Lambda} V > 0\), this implies that 
\[
 \liminf_{n \to \infty} \norm{u_\varepsilon}_{\Lambda} > 0.
\]
Hence, there exists a family \((x_\varepsilon)_{\varepsilon >0}\) such that for every \(\rho > 0\),
\[
 \liminf_{\varepsilon \to 0} \norm{u}_{L^\infty (B_{\varepsilon \rho} (x_\varepsilon) \cap \Omega)} > 0.
\]
By lemma~\ref{lemmaEquivalenceLpLinfty}, we have 
\[
 \liminf_{\varepsilon \to 0} \varepsilon^{-N} \int_{B_{\varepsilon \rho} (x_\varepsilon) \cap \Omega} \abs{u_\varepsilon}^p > 0.
\]
By the asymptotics of proposition~\ref{propositionLowerBound} and by the upper semicontinuity of action of the limiting problem \(\mathcal{E}\) (proposition~\ref{propositionEcontinuous})
\[
 \liminf_{\varepsilon \to 0} \bigl(\varepsilon^{-N} \mathcal{G}_{\varepsilon} (u_{\varepsilon})- \inf_{\Lambda} \mathcal{C} \bigr)
 \ge \liminf_{\varepsilon \to 0} \bigl(\varepsilon^{-N} \mathcal{G}_{\varepsilon} (u_{\varepsilon})- \mathcal{C} (x_{\varepsilon})\bigr)\ge 0.
\]
By our assumption, we have 
\[
  \limsup_{\varepsilon \to 0} \bigl(\varepsilon^{-N} \mathcal{G}_{\varepsilon} (u_{\varepsilon})- \mathcal{C} (x_{\varepsilon})\bigr) 
  \le \limsup_{\varepsilon \to 0} \bigl(\varepsilon^{-N} \mathcal{G}_{\varepsilon} (u_{\varepsilon})- \inf_{\Lambda} \mathcal{C}\bigr) \le 0.
\]
Therefore we conclude that 
\[
 \lim_{\varepsilon \to 0} \varepsilon^{-N} \mathcal{G}_{\varepsilon} (u_{\varepsilon})
 = \lim_{\varepsilon \to 0} \mathcal{C} (x_{\varepsilon}) = \inf_{\Lambda} \mathcal{C}.
\]

Assume now by contradiction that 
\[
 \limsup_{\substack{R \to \infty\\ \varepsilon \to 0}} \norm{u_\varepsilon}_{L^\infty (\Lambda \setminus B_{\varepsilon R})} > 0.
\]
In that case, there are sequences \((\varepsilon_n)_{n\in \N}\) in \(\R^+\) and
\((y_n)_{n\in \N}\) in \(\Lambda\) such that 
\begin{align*}
 \lim_{n\to \infty} \varepsilon_n &= 0\:, & \liminf_{n\to \infty} \norm{u_{\varepsilon_n}}_{L^\infty (B_{\varepsilon_n \rho} (y_n)\cap \Omega)} &> 0 &
&\text{and} & \lim_{n\to \infty} \frac{\abs{x_{\varepsilon_n}-y_n}}{\varepsilon_n} &= +\infty\:.
\end{align*}
Up to a subsequence, since the set \(\Lambda\) is bounded we can assume that the sequences \((x_{\varepsilon_n})_{n \in \N}\) and \((y_n)_{n \in \N}\) converge in \(\Bar{\Lambda}\).
By lemma~\ref{lemmaEquivalenceLpLinfty}, we have
\begin{align*}
 \lim_{n \to \infty}\frac{1}{\varepsilon_n^N} \int_{B_{\varepsilon_n \rho} (y_n) \cap \Omega} 
 \abs{u_{\varepsilon_n}}^p& > 0.
\end{align*}
By the lower bound on the asymptotic action of solutions (proposition~\ref{propositionLowerBound}), we conclude that
\begin{equation*}
  \liminf_{n\to\infty} \varepsilon_n^{-N} \mathcal{G}_{\varepsilon_n}(u_{\varepsilon_n}) 
   \geq \liminf_{n\to\infty} \bigl( \mathcal{C}(x_{\varepsilon_n}) 
    + \mathcal{C}(y_n) \bigr) \geq 2 \inf_{\Lambda} \mathcal{C}\: ,
\end{equation*}
in contradiction with our assumption since \(\inf_{\Lambda} \mathcal{C} > 0\).
\end{proof}

\section{Proof of the global concentration theorem}
We have now all the tools to prove theorem~\ref{theoremGlobal}.

\begin{proof}[Proof of theorem~\ref{theoremGlobal}]
We take \(\Lambda \defeq \Omega\) so that the penalized and original problems coincide. Since the set \(\Omega\) is bounded, the existence of solutions to the problem follows from proposition~\ref{propositionExistencePenalized}. The asymptotics follow from proposition~\ref{propositionUpperBound} and proposition~\ref{propositionVanishingSmallBalls}, since the penalized functional \(\mathcal{G}_\varepsilon\) coincides with the original functional \(\mathcal{F}_\varepsilon\).
\end{proof}

\section{Proof of the local concentration theorem}

The last tool that we need to prove theorem~\ref{theoremLocal} is the following construction of barrier functions \citelist{\cite{BonheureDiCosmoVanSchaftingen2012}*{lemma 5.5 and \S 6}} (see also \cite{MorozVanSchaftingen2010}*{proposition 5.3 and \S 6}).

\begin{lemma}[Construction of barrier functions]
\label{lemmaBarrier}
Let \((x_{\varepsilon})_{\varepsilon > 0}\) be a family of points in \(\Lambda\) such that \(\liminf_{\varepsilon\to 0} d(x_{\varepsilon},\partial \Lambda) > 0\), let \(\mu \in (0,1)\) and let \(R>0\).
There exists \(\varepsilon_0 > 0\) and a family of functions \((W_{\varepsilon})_{0<\varepsilon<\varepsilon_0}\) in \(C^{1,1}(\R^N\setminus B(x_{\varepsilon},\varepsilon R))\)
such that, for \(\varepsilon \in (0,\varepsilon_0)\),
\begin{enumerate}[(i)]
\item \(W_{\varepsilon}\) satisfies the inequation
\[
 -\varepsilon^2 (\Delta + \mu H) W_{\varepsilon} + (1-\mu) V W_{\varepsilon} \geq 0 \hspace{0.5cm} \text{in} \ \R^N\setminus
 B(x_{\varepsilon},\varepsilon R),
\]
\item \(\nabla W_{\varepsilon} \in L^2(\R^N\setminus B(x_{\varepsilon},\varepsilon R))\),
\item \(W_{\varepsilon} = 1\) on \(\partial B(x_{\varepsilon},\varepsilon R)\),
\item there exist \(C, \lambda, \nu > 0\) such that for every \(x \in \R^N \setminus B(x_{\varepsilon},\varepsilon R)\),
\[
 W_{\varepsilon}(x) \leq C \exp \Bigl( -\frac{\lambda}{\varepsilon} \frac{\abs{x-x_{\varepsilon}}}{1+\abs{x-x_{\varepsilon}}} \Bigr) \bigl( 1+\abs{x}^2 \bigr)^{-\frac{N-2}{2}}.
\]
\end{enumerate}
If moreover 
\[
 \liminf_{\abs{x} \to \infty} V (x) \abs{x}^2 > 0,
\]
then there exist \(C, \lambda, \nu > 0\) such that for every \(x \in \R^N\setminus B(x_{\varepsilon},\varepsilon R)\),
\[
 W_{\varepsilon}(x) \leq C \exp \Bigl( -\frac{\lambda}{\varepsilon} \frac{\abs{x-x_{\varepsilon}}}{1+\abs{x-x_{\varepsilon}}} \Bigr) \bigl( 1+\abs{x}^2 \bigr)^{-\frac{\nu}{\varepsilon}}.
\]
\end{lemma}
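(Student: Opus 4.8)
The plan is to exhibit, for each small $\varepsilon$, an explicit positive $C^{1,1}$ function on $\R^N \setminus B(x_\varepsilon, \varepsilon R)$ which is a supersolution of $L_\varepsilon \defeq -\varepsilon^2(\Delta + \mu H) + (1-\mu) V$, equals $1$ on $\partial B(x_\varepsilon, \varepsilon R)$, and has a controlled tail; then (i) and (iii) are the substance while (ii) and (iv) are read off the formula. Following \cite{BonheureDiCosmoVanSchaftingen2012}, I would set, with $\Phi_\varepsilon(x) \defeq \frac{\abs{x - x_\varepsilon}}{1 + \abs{x - x_\varepsilon}}$,
\[
  W_\varepsilon (x) \defeq c_\varepsilon \exp\Bigl(-\tfrac{\lambda}{\varepsilon}\, \Phi_\varepsilon (x)\Bigr)\, w_\sharp (x),
\]
where the first factor is smooth on $\R^N \setminus \{x_\varepsilon\}$, bounded, and behaves like $\exp(-\frac{\lambda}{\varepsilon}\abs{x - x_\varepsilon})$ near $\partial B(x_\varepsilon, \varepsilon R)$, so that it carries the concentration at scale $\varepsilon$; $w_\sharp$ is a positive supersolution of $-\Delta - \mu H$ (hence of $L_\varepsilon/\varepsilon^2$ wherever $V \ge 0$, since $V$ enters with the right sign) whose algebraic decay is $(1 + \abs{x}^2)^{-(N-2)/2}$ as in (iv); and $c_\varepsilon > 0$ is the normalisation giving (iii). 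The existence of such a $w_\sharp$ is where the design of the penalization potential is used: the factor $(\log(\abs{x}/\rho_0))^{-\beta}$ with $\beta > 0$ makes $H$ strictly subcritical for the logarithmically refined Hardy inequality of lemma~\ref{lemmaPositivity}, so that the positivity principle for $-\Delta - H$ (recalled before lemma~\ref{lemmaPositivity}) yields a positive supersolution with polynomial decay; where $H$ does not vanish one also keeps the bonus $(1-\mu) V \ge 0$. The $C^{1,1}$ (and not better) regularity reflects that $H$ is merely bounded and jumps across $\partial \Lambda$.

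The second step is to verify (i), splitting $\R^N \setminus B(x_\varepsilon, \varepsilon R)$ into a bounded transition shell $\{\varepsilon R \le \abs{x - x_\varepsilon} \le r_0\}$ and its complement. The radius $r_0 > 0$ is fixed so that $B(x_\varepsilon, r_0) \subset \Lambda$ for all small $\varepsilon$ — possible since $\liminf_{\varepsilon \to 0}\dist(x_\varepsilon, \partial \Lambda) > 0$ — hence on the shell $H = 0$ and $V \ge \inf_\Lambda V > 0$. Expanding $L_\varepsilon W_\varepsilon$ and dividing by $W_\varepsilon > 0$, the dangerous terms created by the exponential are the zeroth-order $-\lambda^2 \abs{\nabla \Phi_\varepsilon}^2 \ge -\lambda^2$ and the first-order $\lambda\varepsilon\, \Delta \Phi_\varepsilon$, which on $\abs{x - x_\varepsilon} \ge \varepsilon R$ is $O(\lambda/R)$; the remaining terms are $O(\lambda\varepsilon) + O(\varepsilon^2)$, in particular $-\varepsilon^2 \Delta w_\sharp/w_\sharp$. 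Choosing $\lambda$ small depending only on $N$, $R$, $\mu$ and $\inf_\Lambda V$ makes the resulting bracket positive on the shell, and shrinking $\varepsilon_0$ absorbs the $O(\lambda\varepsilon) + O(\varepsilon^2)$ errors. On the outer region $\abs{x - x_\varepsilon} \ge r_0$ the exponential factor is essentially frozen and $\abs{\nabla \Phi_\varepsilon} \le (1 + r_0)^{-2}$ is small, so dividing by $\varepsilon^2$ the inequation reduces, up to a small multiple of $\varepsilon^{-1} w_\sharp$, to the supersolution property $-\Delta w_\sharp - \mu H w_\sharp + \varepsilon^{-2}(1-\mu) V w_\sharp \ge 0$ of $w_\sharp$; once more one takes $\lambda$ and $\varepsilon_0$ small.

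Then (ii) and (iv) follow at once: $W_\varepsilon(x) \le c_\varepsilon \exp(-\frac{\lambda}{\varepsilon}\Phi_\varepsilon(x))(1 + \abs{x}^2)^{-(N-2)/2}$ by construction, and $\nabla W_\varepsilon$ decays like $(1 + \abs{x})^{-(N-1)}$ at infinity, which is square-integrable for $N \ge 3$ (for $N = 2$ one is in the bounded or the fast-decay case). For the refined bound under $\liminf_{\abs{x}\to\infty} V(x)\abs{x}^2 > 0$, fix $c_0, R_0 > 0$ with $V(x) \ge c_0 \abs{x}^{-2}$ for $\abs{x} \ge R_0$ and replace the tail of $w_\sharp$ beyond $\{\abs{x} = R_0\}$ by $(1 + \abs{x}^2)^{-\nu/\varepsilon}$: a direct computation gives $-\varepsilon^2 \Delta (1 + \abs{x}^2)^{-\nu/\varepsilon} = (1 + \abs{x}^2)^{-\nu/\varepsilon}\abs{x}^{-2}\, O(\nu)$ with constant independent of $\varepsilon$, dominated by $(1-\mu) c_0 \abs{x}^{-2}$ once $\nu$ is small relative to $c_0$ and $\mu$, while $\varepsilon^2 \mu H$ carries the extra factor $(\log\abs{x})^{-2-\beta}$ and is absorbed in the same way; a $C^{1,1}$ gluing at $\abs{x} = R_0$ and a further shrinking of $\varepsilon_0$ conclude.

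The hard part will be making (i) hold \emph{uniformly in $\varepsilon$} over the whole of $\R^N \setminus B(x_\varepsilon, \varepsilon R)$ at once: $\lambda$ must be small compared with $\inf_\Lambda V$ near the ball while the curvature term $\lambda\varepsilon\Delta\Phi_\varepsilon$ and the cross term $\lambda\varepsilon\, (\nabla w_\sharp/w_\sharp) \cdot \nabla \Phi_\varepsilon$ are reabsorbed, and simultaneously the far-field profile $w_\sharp$ with the claimed polynomial tail must genuinely be a supersolution of $-\Delta - \mu H$ — precisely the point where the strict subcriticality of the penalization potential (the role of $\beta > 0$), rather than the bare Hardy inequality, is indispensable.
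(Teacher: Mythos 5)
First, a remark on the comparison point: the paper does not actually prove this lemma itself --- the construction is taken over from \cite{BonheureDiCosmoVanSchaftingen2012}*{lemma 5.5 and \S 6} and \cite{MorozVanSchaftingen2010}*{\S\S 5--6}, and the proof printed under the statement is really the deduction of theorem~\ref{theoremLocal} (Kato inequality plus comparison with \(W_\varepsilon\)). Your general plan --- an \(\varepsilon\)-scale exponential factor times an \(\varepsilon\)-independent, polynomially decaying supersolution of \(-\Delta-\mu H\), whose existence rests on the strict subcriticality \(\beta>0\) of the penalization potential --- is the right family of ideas, but as written your verification of (i) fails precisely in the region the penalization is designed for, namely outside \(\Lambda\) at bounded distance from \(x_\varepsilon\).

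Concretely, with \(W_\varepsilon=c_\varepsilon\e^{-\lambda\Phi_\varepsilon/\varepsilon}w_\sharp\) one has
\[
  \frac{L_\varepsilon W_\varepsilon}{W_\varepsilon}
  = -\varepsilon^2\frac{\Delta w_\sharp}{w_\sharp}-\varepsilon^2\mu H+(1-\mu)V
    +2\lambda\varepsilon\,\nabla\Phi_\varepsilon\cdot\frac{\nabla w_\sharp}{w_\sharp}
    +\lambda\varepsilon\,\Delta\Phi_\varepsilon
    -\lambda^2\abs{\nabla\Phi_\varepsilon}^2 ,
\]
and after dividing by \(\varepsilon^2\), as you do in the outer region, the last term is \(-\lambda^2\varepsilon^{-2}\abs{\nabla\Phi_\varepsilon}^2\), which at a point with \(\abs{x-x_\varepsilon}\) of order \(1\) (just outside \(\Lambda\)) has size \(\lambda^2/\varepsilon^2\) --- not ``a small multiple of \(\varepsilon^{-1}w_\sharp\)''. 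The exponential factor is \emph{not} essentially frozen there: its logarithmic gradient is \(\lambda/\bigl(\varepsilon(1+\abs{x-x_\varepsilon})^2\bigr)\), of order \(\lambda/\varepsilon\) until \(\abs{x-x_\varepsilon}\) is of order \(\varepsilon^{-1/2}\) at least. Inside \(\Lambda\) this is harmless because \((1-\mu)V\ge(1-\mu)\inf_\Lambda V\) absorbs \(\lambda^2\) once \(\lambda\) is small; but outside \(\Lambda\) no positive lower bound on \(V\) is assumed (that is exactly the fast-decay setting), and the only remaining positive term is the \(\varepsilon\)-independent margin of \(w_\sharp\), at best of order \(\abs{x}^{-2}\bigl(\log\abs{x}\bigr)^{-1-\beta}\), which cannot dominate \(\lambda^2/\varepsilon^2\) no matter how small \(\lambda\) and \(\varepsilon_0\) are. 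So the global product ansatz does not satisfy (i) on the set \(\bigl(\Omega\setminus\Lambda\bigr)\cap\{\abs{x-x_\varepsilon}\lesssim 1/\varepsilon\}\). The remedy, as in the cited constructions, is to decouple the two r\'egimes: use the exponential profile only inside \(\Lambda\), and outside \(\Lambda\) take a constant of size \(\e^{-c\lambda/\varepsilon}\) times an explicit \(\varepsilon\)-independent supersolution of \(-\Delta-\mu H\), for instance \(\abs{x}^{2-N}\bigl(c_1-c_2(\log(\abs{x}/\rho_0))^{-\gamma}\bigr)\) with \(0<\gamma\le 1+\beta\) --- an explicit Ansatz is needed in any case, since the positivity principle of lemma~\ref{lemmaPositivity} does not by itself produce a supersolution with the exact decay \((1+\abs{x}^2)^{-(N-2)/2}\). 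The two pieces are then matched in \(C^{1,1}\) inside \(\Lambda\), and the stated bound (iv) is unaffected because \(\abs{x-x_\varepsilon}/(1+\abs{x-x_\varepsilon})\) is bounded away from \(0\) outside \(\Lambda\), so the frozen constant is comparable there to the exponential factor after renaming \(\lambda\). The same correction is needed in your treatment of the case \(\liminf_{\abs{x}\to\infty}V(x)\abs{x}^2>0\), where the bound \(V\ge c_0\abs{x}^{-2}\) is only available for \(\abs{x}\ge R_0\).
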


We complete now the proof of theorem~\ref{theoremLocal}.

\begin{proof}[Proof of theorem~\ref{theoremLocal}]
The proof follows the lines of \cite{MorozVanSchaftingen2010}*{Proposition 5.4}.
Let \((u_\varepsilon)_{\varepsilon > 0}\) be a family of solutions to the penalized problem. 
By Kato's inequality (proposition~\ref{propositionKato}), we have
\[
 -\varepsilon^2 \Delta \abs{u_\varepsilon} + V \abs{u_\varepsilon}
 \le \scalprod{\sign (u_\varepsilon)}{-\varepsilon^2 \Delta_A u_\varepsilon + V u_\varepsilon}
  = \scalprod{\sign (u_\varepsilon)}{\mathsuper{g}_\varepsilon (u_\varepsilon)}.
\]
Hence, we deduce from $(g_2)$ and $(g_3)$ that 
\[
-\varepsilon^2 \Delta \abs{u_\varepsilon} + V \abs{u_\varepsilon}
 \le \chi_\Lambda \abs{u_\varepsilon}^{p - 1} + \mu \varepsilon^2 H_\varepsilon \abs{u_\varepsilon}.
\]
since \(p > 2\) and \(V\) is positive on \(\Lambda\), by proposition~\ref{propositionVanishingSmallBalls},  for \(R > 0\) sufficiently large and \(\varepsilon > 0\) sufficiently small, we have
\[
\left\{
\begin{aligned}
 -\varepsilon^2 \Delta \abs{u_\varepsilon} -\varepsilon^2 \mu H \abs{u_\varepsilon} + (1 - \mu) V \abs{u_\varepsilon} &\le 0 & & \text{in \(\Omega \setminus B_{\varepsilon R} (x_\varepsilon)\)},\\
  \abs{u_\varepsilon} &\le 1 & & \text{on \(\partial B_{\varepsilon R} \cap \Omega\)}.
\end{aligned}
\right.
\]

In view of the construction of supersolutions of lemma~\ref{lemmaBarrier} we deduce by the comparison principle that for every \(x \in \Omega \setminus B_{\varepsilon R} (x_\varepsilon)\),
\begin{equation*}
\begin{split}
 \abs{u_{\varepsilon}(x)} \leq W_\varepsilon (x) & \le C \exp \Bigl( -\frac{\lambda}{\varepsilon} \frac{\abs{x-x_{\varepsilon}}}{1+\abs{x-x_{\varepsilon}}} \Bigr) 
\bigl( 1 + \abs{x}^2 \bigr)^{-\frac{N-2}{2}} .
\end{split}
\end{equation*}
If \(p > \frac{N}{N - 2}\), then \(p - 1 > \frac{2}{N - 2}\) and we have for \(\varepsilon > 0\) sufficiently small
\begin{equation*}
 \abs{u_{\varepsilon}(x)}^{p-1} \le \mu \varepsilon^2 \frac{(N-2)^2}{4\abs{x-x_0}^2} \Bigl(\frac{\log \frac{\rho}{\rho_0}}{\log \frac{\abs{x-x_0}}{\rho_0}}
\Bigr)^{1+\beta}.
\end{equation*}
By definition of the penalized nonlinearity \(g_{\varepsilon}\), we have then 
\begin{equation*}
\mathsuper{g}_{\varepsilon} (u_{\varepsilon}) = \abs{u_{\varepsilon}}^{p - 2} u_\varepsilon \qquad \text{in \(\R^N\setminus\Lambda\)},
\end{equation*}
and therefore the function \(u_{\varepsilon}\) solves the original problem \eqref{problemMNLSE}.

The proof in the case \(\liminf_{\abs{x} \to \infty} V (x) \abs{x}^2 > 0\) is similar.
\end{proof}

\section*{Acknowldegment}

Part of this work was done while Jonathan Di Cosmo was a research fellow of the Fonds de la Recherche Scientifique--FNRS. 

\begin{bibdiv}
\begin{biblist}

\bib{AlvesFigueiredoFurtado2011}{article}{
   author={Alves, Claudianor O.},
   author={Figueiredo, Giovany M.},
   author={Furtado, Marcelo F.},
   title={Multiple solutions for a nonlinear Schr\"odinger equation with
   magnetic fields},
   journal={Comm. Partial Differential Equations},
   volume={36},
   date={2011},
   number={9},
   pages={1565--1586},
   issn={0360-5302},
}

\bib{AmbrosettiRabinowitz1973}{article}{
   author={Ambrosetti, Antonio},
   author={Rabinowitz, Paul H.},
   title={Dual variational methods in critical point theory and
   applications},
   journal={J. Functional Analysis},
   volume={14},
   date={1973},
   pages={349--381},
}

\bib{ArioliSzulkin2003}{article}{
    author = {Arioli, Gianni},
    author = {Szulkin, Andrzej},
     title = {A semilinear Schr\"odinger equation in the presence of a
              magnetic field},
   journal = {Arch. Ration. Mech. Anal.},
    volume = {170},
      year = {2003},
    number = {4},
     pages = {277--295},
}

\bib{Barile2008}{article}{
   author={Barile, Sara},
   title={Multiple semiclassical states for singular magnetic nonlinear
   Schr\"odinger equations},
   journal={Electron. J. Differential Equations},
   date={2008},
   pages={no. 37, 18 p.},
   issn={1072-6691},
}

\bib{Barile2008b}{article}{
   author={Barile, Sara},
   title={A multiplicity result for singular NLS equations with magnetic
   potentials},
   journal={Nonlinear Anal.},
   volume={68},
   date={2008},
   number={11},
   pages={3525--3540},
   issn={0362-546X},
}

\bib{BarileCingolaniSecchi2006}{article}{
   author={Barile, Sara},
   author={Cingolani, Silvia},
   author={Secchi, Simone},
   title={Single-peaks for a magnetic Schr\"odinger equation with critical
   growth},
   journal={Adv. Differential Equations},
   volume={11},
   date={2006},
   number={10},
   pages={1135--1166},
   issn={1079-9389},
}

\bib{BartschDancerPeng2006}{article}{
   author={Bartsch, Thomas},
   author={Dancer, E. Norman},
   author={Peng, Shuangjie},
   title={On multi-bump semi-classical bound states of nonlinear
   Schr\"odinger equations with electromagnetic fields},
   journal={Adv. Differential Equations},
   volume={11},
   date={2006},
   number={7},
   pages={781--812},
   issn={1079-9389},
}

\bib{BonheureDiCosmoVanSchaftingen2012}{article}{
   author={Bonheure, Denis},
   author={Di Cosmo, Jonathan},
   author={Van Schaftingen, Jean},
   title={Nonlinear Schr\"odinger equation with unbounded or vanishing
   potentials: solutions concentrating on lower dimensional spheres},
   journal={J. Differential Equations},
   volume={252},
   date={2012},
   number={2},
   pages={941--968},
   issn={0022-0396},
}

\bib{BonheureVanSchaftingen2006}{article}{
   author={Bonheure, Denis},
   author={Van Schaftingen, Jean},
   title={Nonlinear Schr\"odinger equations with potentials vanishing at
   infinity},
   journal={C. R. Math. Acad. Sci. Paris},
   volume={342},
   date={2006},
   number={12},
   pages={903--908},
   issn={1631-073X},
}

\bib{BonheureVanSchaftingen2008}{article}{
   author={Bonheure, Denis},
   author={Van Schaftingen, Jean},
   title={Bound state solutions for a class of nonlinear Schr\"odinger
   equations},
   journal={Rev. Mat. Iberoam.},
   volume={24},
   date={2008},
   number={1},
   pages={297--351},
   issn={0213-2230},
}

\bib{CaoTang2006}{article}{
   author={Cao, Daomin},
   author={Tang, Zhongwei},
   title={Existence and uniqueness of multi-bump bound states of nonlinear
   Schr\"odinger equations with electromagnetic fields},
   journal={J. Differential Equations},
   volume={222},
   date={2006},
   number={2},
   pages={381--424},
   issn={0022-0396},
}

\bib{Chabrowski2002}{article}{
   author={Chabrowski, J.},
   title={Existence results for nonlinear Schr\"odinger equations with
   electromagnetic fields},
   journal={Monatsh. Math.},
   volume={137},
   date={2002},
   number={4},
   pages={261--272},
   issn={0026-9255},
}

\bib{ChabrowskiSzulkin2005}{article}{
   author={Chabrowski, Jan},
   author={Szulkin, Andrzej},
   title={On the Schr\"odinger equation involving a critical Sobolev
   exponent and magnetic field},
   journal={Topol. Methods Nonlinear Anal.},
   volume={25},
   date={2005},
   number={1},
   pages={3--21},
   issn={1230-3429},
}

\bib{Cingolani2003}{article}{
   author={Cingolani, Silvia},
   title={Semiclassical stationary states of nonlinear Schr\"odinger
   equations with an external magnetic field},
   journal={J. Differential Equations},
   volume={188},
   date={2003},
   number={1},
   pages={52--79},
   issn={0022-0396},
}

\bib{CingolaniClapp2009}{article}{
   author={Cingolani, Silvia},
   author={Clapp, M{\'o}nica},
   title={Intertwining semiclassical bound states to a nonlinear magnetic
   Schr\"odinger equation},
   journal={Nonlinearity},
   volume={22},
   date={2009},
   number={9},
   pages={2309--2331},
   issn={0951-7715},
}

\bib{CingolaniClapp2010}{article}{
   author={Cingolani, Silvia},
   author={Clapp, M{\'o}nica},
   title={Symmetric semiclassical states to a magnetic nonlinear
   Schr\"odinger equation via equivariant Morse theory},
   journal={Commun. Pure Appl. Anal.},
   volume={9},
   date={2010},
   number={5},
   pages={1263--1281},
   issn={1534-0392},
}

\bib{CingolaniJeanjeanSecchi2009}{article}{
   author={Cingolani, Silvia},
   author={Jeanjean, Louis},
   author={Secchi, Simone},
   title={Multi-peak solutions for magnetic NLS equations without
   non-degeneracy conditions},
   journal={ESAIM Control Optim. Calc. Var.},
   volume={15},
   date={2009},
   number={3},
   pages={653--675},
   issn={1292-8119},
}

\bib{CingolaniSecchi2002}{article}{
   author={Cingolani, Silvia},
   author={Secchi, Simone},
   title={Semiclassical limit for nonlinear Schr\"odinger equations with
   electromagnetic fields},
   journal={J. Math. Anal. Appl.},
   volume={275},
   date={2002},
   number={1},
   pages={108--130},
   issn={0022-247X},
}

\bib{CingolaniSecchi2005}{article}{
   author={Cingolani, Silvia},
   author={Secchi, Simone},
   title={Semiclassical states for NLS equations with magnetic potentials
   having polynomial growths},
   journal={J. Math. Phys.},
   volume={46},
   date={2005},
   number={5},
   pages={053503, 19},
   issn={0022-2488},
}

\bib{delPinoFelmer1996}{article}{
   author={del Pino, Manuel},
   author={Felmer, Patricio L.},
   title={Local mountain passes for semilinear elliptic problems in
   unbounded domains},
   journal={Calc. Var. Partial Differential Equations},
   volume={4},
   date={1996},
   number={2},
   pages={121--137},
   issn={0944-2669},
}

\bib{delPinoFelmer1997}{article}{
   author={del Pino, Manuel},
   author={Felmer, Patricio L.},
   title={Semi-classical states for nonlinear Schr\"odinger equations},
   journal={J.~Funct. Anal.},
   volume={149},
   date={1997},
   number={1},
   pages={245--265},
   issn={0022-1236},
}

\bib{delPinoFelmer1998}{article}{
   author={del Pino, Manuel},
   author={Felmer, Patricio L.},
   title={Multi-peak bound states for nonlinear Schr\"odinger equations},
   journal={Ann. Inst. H. Poincar\'e Anal. Non Lin\'eaire},
   volume={15},
   date={1998},
   number={2},
   pages={127--149},
   issn={0294-1449},
}

\bib{DiCosmo2011}{thesis}{
  author={Di Cosmo, Jonathan},
  title={Nonlinear Schr\"odinger equation and Schr\"odinger-Poisson system in the semiclassical limit},
  institution={Universit\'e catholique de Louvain},
  type={Doctoral thesis},
  date={2011},
}

\bib{DiCosmoVanSchaftingen2013}{article}{
  author = {Di Cosmo, Jonathan},
  author = {Van Schaftingen, Jean},
  title = {Stationary solutions of the nonlinear Schr\"odinger equation with fast-decay potentials concentrating around local maxima},
  journal = {Calc. Var. Partial Differential Equations},
  volume={47},
  date={2013}, 
  number={1--2}, 
  pages={243--271},
}

\bib{DingLiu2013}{article}{
   author={Ding, Yanheng},
   author={Liu, Xiaoying},
   title={Semiclassical solutions of Schr\"odinger equations with magnetic
   fields and critical nonlinearities},
   journal={Manuscripta Math.},
   volume={140},
   date={2013},
   number={1-2},
   pages={51--82},
   issn={0025-2611},
}

\bib{EstebanLions1989}{incollection}{
    author = {Esteban, Maria J.},
    author = {Lions, Pierre-Louis},
     title = {Stationary solutions of nonlinear Schr\"odinger equations
              with an external magnetic field},
 booktitle = {Partial differential equations and the calculus of variations,
              Vol.\ I},
    series = {Progr. Nonlinear Differential Equations Appl.},
    volume = {1},
     pages = {401--449},
 publisher = {Birkh\"auser Boston},
   address = {Boston, MA},
      year = {1989},
}

\bib{GilbargTrudinger1998}{book}{
   author={Gilbarg, David},
   author={Trudinger, Neil S.},
   title={Elliptic partial differential equations of second order},
   publisher={Springer},
   place={Berlin},
   date={1998},
   pages={xiv+517},
   isbn={3-540-41160-7},
}

\bib{Kato1972}{article}{
   author={Kato, Tosio},
   title={Schr\"odinger operators with singular potentials},
   journal={Proceedings of the International Symposium on Partial Differential Equations and the Geometry of Normed Linear Spaces (Jerusalem, 1972), Israel J. Math.},
   volume={13},
   date={1972},
   pages={135--148},
   issn={0021-2172},
}

\bib{Kovanen2011}{article}{
   author={Kovanen, Tuomas}, 
   author={Tarhasaari, Timo},
   author={Kettunen, Lauri},
   title={Computation of local electromagnetic force},
   journal={IEEE Magn. Lett.},
   volume={47},
   number={5},
   date={2011},
   pages={894--897},
}

\bib{Kurata2000}{article}{
   author={Kurata, Kazuhiro},
   title={Existence and semi-classical limit of the least energy solution to
   a nonlinear Schr\"odinger equation with electromagnetic fields},
   journal={Nonlinear Anal.},
   volume={41},
   date={2000},
   number={5-6, Ser. A: Theory Methods},
   pages={763--778},
   issn={0362-546X},
}

\bib{LiebLoss}{book}{
    author = {Lieb, Elliott H.},
    author = {Loss, Michael},
     title = {Analysis},
    series = {Graduate Studies in Mathematics},
    volume = {14},
   edition = {Second Ed.},
 publisher = {American Mathematical Society},
   address = {Providence, RI},
      year = {2001},
     pages = {xxii+346},
      isbn = {0-8218-2783-9},
}

\bib{Lions1984CC2}{article}{
   author={Lions, P.-L.},
   title={The concentration-compactness principle in the calculus of
   variations: The locally compact case},
   part={II},
   journal={Ann. Inst. H. Poincar\'e Anal. Non Lin\'eaire},
   volume={1},
   date={1984},
   number={4},
   pages={223--283},
   issn={0294-1449},
}

\bib{MorozVanSchaftingen2009}{article}{
   author={Moroz, Vitaly},
   author={Van Schaftingen, Jean},
   title={Existence and concentration for nonlinear Schr\"odinger equations
   with fast decaying potentials},
   journal={C. R. Math. Acad. Sci. Paris},
   volume={347},
   date={2009},
   number={15-16},
   pages={921--926},
   issn={1631-073X},
}

\bib{MorozVanSchaftingen2010}{article}{
   author={Moroz, Vitaly},
   author={Van Schaftingen, Jean},
   title={Semiclassical stationary states for nonlinear Schr\"odinger
   equations with fast decaying potentials},
   journal={Calc. Var. Partial Differential Equations},
   volume={37},
   date={2010},
   number={1-2},
   pages={1--27},
   issn={0944-2669},
}

\bib{MorozVanSchaftingenGround}{article}{ 
  author = {Moroz, Vitaly},
  author = {Van Schaftingen, Jean},
  title = {Groundstates of nonlinear Choquard equations: existence, qualitative properties and decay asymptotics},
  journal={J. Funct. Anal.},
  volume = {265},
  date = {2013}, 
  number={2}, 
  pages={153--184},
}

\bib{Rabinowitz1986}{book}{
   author={Rabinowitz, Paul H.},
   title={Minimax methods in critical point theory with applications to
   differential equations},
   series={CBMS Regional Conference Series in Mathematics},
   volume={65},
   publisher={Conference Board of the Mathematical
   Sciences}, 
   place={Washington, DC},
   date={1986},
   pages={viii+100},
   isbn={0-8218-0715-3},
}

\bib{Rabinowitz1992}{article}{
   author={Rabinowitz, Paul H.},
   title={On a class of nonlinear Schr\"odinger equations},
   journal={Z. Angew. Math. Phys.},
   volume={43},
   date={1992},
   number={2},
   pages={270--291},
   issn={0044-2275},
}

\bib{SecchiSquassina2005}{article}{
   author={Secchi, Simone},
   author={Squassina, Marco},
   title={On the location of spikes for the Schr\"odinger equation with
   electromagnetic field},
   journal={Commun. Contemp. Math.},
   volume={7},
   date={2005},
   number={2},
   pages={251--268},
   issn={0219-1997},
}

\bib{Squassina2009}{article}{
   author={Squassina, Marco},
   title={Soliton dynamics for the nonlinear Schr\"odinger equation with
   magnetic field},
   journal={Manuscripta Math.},
   volume={130},
   date={2009},
   number={4},
   pages={461--494},
   issn={0025-2611},
}

\bib{Struwe2008}{book}{
   author={Struwe, Michael},
   title={Variational methods},
   series={Ergebnisse der Mathematik und ihrer Grenzgebiete. 3. Folge},
   volume={34},
   edition={4},
   subtitle={Applications to nonlinear partial differential equations and
   Hamiltonian systems},
   publisher={Springer},
   place={Berlin},
   date={2008},
   pages={xx+302},
   isbn={978-3-540-74012-4},
}

\bib{Trudinger1973}{article}{
   author={Trudinger, Neil S.},
   title={Linear elliptic operators with measurable coefficients},
   journal={Ann. Scuola Norm. Sup. Pisa (3)},
   volume={27},
   date={1973},
   pages={265--308},
}

\bib{VanSchaftingen}{article}{
  author={Van Schaftingen, Jean},
  title={Interpolation inequalities between Sobolev and Morrey-Campanato spaces: A common gateway to concentration-compactness and Gagliardo-Nirenberg}, 
  journal={Port. Math.},
  volume={71},
  date={2014}, 
  number={3--4}, 
  pages={159--175},
}

\bib{Willem1996}{book}{
    author = {Willem, Michel},
     title = {Minimax theorems},
    series = {Progress in Nonlinear Differential Equations and their Applications, 24},
 publisher = {Birkh\"auser Boston Inc.},
   address = {Boston, MA},
      year = {1996},
     pages = {x+162},
      isbn = {0-8176-3913-6}
}

\bib{YinZhang2009}{article}{
   author={Yin, Huicheng},
   author={Zhang, Pingzheng},
   title={Bound states of nonlinear Schr\"odinger equations with potentials
   tending to zero at infinity},
   journal={J. Differential Equations},
   volume={247},
   date={2009},
   number={2},
   pages={618--647},
   issn={0022-0396},
}

\end{biblist}
\end{bibdiv}

\end{document}